\numberwithin{equation}{section}
\theoremstyle{definition}
\numberwithin{equation}{section}
\newtheorem{theorem}{\bf Theorem}[section]
\newtheorem{remark}{\bf Remark}[section]
\newtheorem{proposition}{Proposition}[section]
\newtheorem{lemma}{Lemma}[section]
\newtheoremstyle
{remarkstyle}
{}
{11pt}
{}
{}
{\bfseries}
{:}
{     }
{\thmname{#1} \thmnumber{#2} }
\theoremstyle{remarkstyle}
\begin{document}
	\title{Fractional Poisson Random Fields on $\mathbb{R}^2_+$}
	\author{K. K. Kataria}
	\address{Kuldeep Kumar Kataria, Department of Mathematics,
		 Indian Institute of Technology Bhilai, Durg-491002, India.}
	 \email{kuldeepk@iitbhilai.ac.in}
	 \author[Pradeep Vishwakarma]{P. Vishwakarma}
	 \address{Pradeep Vishwakarma, Department of Mathematics,
	 	Indian Institute of Technology Bhilai, Durg-491002, India.}
	 \email{pradeepv@iitbhilai.ac.in}

	\subjclass[2010]{Primary : 60G55; Secondary: 60G57, 60G60}
	
	\keywords{ fractional Poisson random field, Caputo fractional derivative, inverse stable subordinator, Adomian decomposition method, generalized Wright function, order statistics, generalized fractional Poisson process, random motion}
	\date{\today}
	
	\maketitle
	\begin{abstract}
		In this paper, we consider a fractional Poisson random field (FPRF) on positive plane. It is defined as a process whose one dimensional distribution is the solution of a system of fractional partial differential equations. A time-changed representation for the FPRF is given in terms of the composition of Poisson random field with a bivariate random process. Some integrals of the FPRF are introduced and studied. Using the Adomian decomposition method, a closed form expression for its probability mass function is obtained in terms of the generalized Wright function. Some results related to the order statistics of random numbers of random variables are presented. Also, we introduce a generalization of Poisson random field on $\mathbb{R}^d_+$, $d\ge1$ which reduces to the Poisson random field in a special case. For $d=1$, it further reduces to a generalized Poisson process (GPP). A time-changed representation for the GPP is established. Moreover, we construct a time-changed linear and planer random motions of a particle driven by the GPP. The conditional distribution of the random position of particle is derived.  Later, we define the compound fractional Poisson random field via FPRF. Also, a generalized version of it on $\mathbb{R}^d_+$, $d\ge1$ is discussed.
	\end{abstract}
	
\section{Introduction}
 The Poisson random field (PRF) also known as the spatial Poisson point process on $\mathbb{R}^d$, $d\ge1$ represents the random arrangements of  points in $d$-dimensional space. It is an important statistical models for examining the points pattern, where the points denote the spatial locations of various objects. The PRF as mathematical model has been used in various disciplines of engineering and science such as physics (see Scargle (1998)), astronomy (see Babu and Feigelson (1996)), ecology (see Thompson (1955)) and telecommunication (see Andrews \textit{et al.} (2010)). In physics, it has potential application in the study of an ensemble average of field theories (see Peng (2021)). Also, the PRF on two dimensional Euclidean space can be use to model the intermittent phenomenon of physical system (see Field Jr. and Grigoriu (2011)).  For $d=1$, the PRF reduces to the well known homogeneous Poisson process. 

Over the past three decades, the global memory property of time fractional processes has made them a significant topic of study. Long memory is a very important characteristics of almost every real system and these time fractional processes provide a better mathematical models for such systems.
A fractional Poisson process on the positive real line can be defined in four different ways. The first method adopts the integral representation of the fractional Brownian motion where the Brownian motion is replaced by the Poisson process (see Wang and Wen (2003)). For further details on this method, we refer the reader to Laskin (2003) and Cahoy \textit{et al.} (2010). The second method is referred as the renewal approach in which the fractional Poisson process is defined as a sum of independent non-negative random variables which has Mittag-Leffler distribution (see Mainardi \textit{et al.} (2005)).  In the third method, the integer order derivative involved in the governing system of differential equations of the Poisson process is replaced by the Caputo fractional derivative (see Beghin and Orsingher ((2009), (2010))). The fourth method is referred as the time-changed approach in which the time is replaced by an inverse stable subordinator (see Meerscheart \textit{et al.} (2011)).
Following the approach of Meerschaert {\it et al.} (2011), Leonenko and Merzbach (2015) introduced a fractional variant of the Poisson random field on $\mathbb{R}^2_+$. Further,  Aletti \textit{et al.} (2018) obtained a martingale characterization of the fractional Poisson processes and various characterizations for the FPRF on $\mathbb{R}^2_+$. Moreover, they derived the system of fractional differential equations that governs the distribution of FPRF. Because of their complex form, these differential equations are difficult to solve.

In this paper, we define a FPRF on $\mathbb{R}^2_+$ using a system of fractional differential equations that governs the distribution of FPRF. The system of differential equations obtained here are different from the equations derived by Aletti \textit{et al.} (2018). By using the Adomian decomposition method, we obtain the explicit form of the probability mass function (pmf) of FPRF in terms of the generalized Wright function.

The paper is organized as follows: 

In the next section, we collect some definitions and known results that will be used throughout this paper.

In Section \ref{sec3}, first, we discuss the definition of homogeneous Poisson random field on $d$-dimensional Euclidean space and  recall some known results for $d=2$. We derive the system of partial differential equations that governs the pmf of PRF on $\mathbb{R}^2_+$. In this system of differential equations, we replace the integer order derivatives with Caputo fractional derivatives to obtain the governing system of partial fractional differential equations for the FPRF. The explicit expressions of its mean, variance and covariance are obtained. Moreover, we establish a connection between the FPRF and PRF via a bivariate random process whose components processes are independent and their density functions are the folded solutions of some specific fractional diffusion equations. We observe that the FPRF and the fractional variant of PRF defined by Leonenko and Merzbach (2015) are  equal in distribution.  Also, we introduce some integrals of the PRF and FPRF.  The variance and a conditional variance of the fractional integral of PRF are obtained.

In Section \ref{sec4}, we apply the Adomian decomposition method to solve the system of differential equations that governs the pmf of FPRF. We derive a closed form expression of the distribution of FPRF in terms of the generalized Wright function. A series expression for the capacity function of FPRF is obtained. Also, some results on the distribution functions of order statistics of random numbers of independent and identically distributed random variables are obtained. Here, the random number of random variables is modeled by the FPRF. Also, we study some alternate fractional variants of the PRF. 

In Section \ref{sec5}, we introduce a generalized fractional Poisson random field on $\mathbb{R}^d_+$, $d\ge1$. For $d=2$, it gives another fractional generalization of the PRF on $\mathbb{R}^2_+$. For $d=1$, it reduces to a generalized Poisson process (GPP). It is shown that the GPP is related to the Poisson process via a random process whose Laplace transform can be expressed in terms of the generalized Mittag-Leffler function. That is, it is a time-changed version of the Poisson process. Moreover, we introduce a time changed version of the PRF on $\mathbb{R}^2_+$, where the time space is replaced by a bivariate random process.

In Section \ref{sec6}, we study time-changed linear and planer random motions of a particle moving with finite velocity which changes its direction at every GPP events. In linear case, we derive the Fourier transform of the random position of particle at time $t>0$ and analyze its limiting value in a particular case. Also, we obtain the conditional characteristics function and conditional joint density of the position vector of a particle randomly moving in plane and changing its direction at every GPP arrivals.

In Section \ref{sec7}, we consider the compound fractional Poisson random field. The series representation of its distribution function is obtained. Also, a generalized version of the compound fractional Poisson random field on $\mathrm{R}^d_+$ is discussed. 
\section{Preliminaries}\label{pre}
Here, we collect some known results which will be used in this paper. First, we fix some notations.

 \subsection{Notations} Let $\mathbb{R}$ and $\mathbb{N}$ denote the sets of real numbers and positive integers, respectively. Also, the set of non-negative real numbers is denoted by $\mathbb{R}_+=\mathbb{R}\cup\{0\}$. For $r\in\mathbb{R}$ and $n\in\mathbb{N}$, we denote the falling and rising factorials by $r_{(n)}=r(r-1)\dots(r-n+1)$  and $(r)_n=r(r+1)\dots(r+n-1)$, respectively. 
 \subsection{Some special functions} Here, we recall the definitions of some special functions. 
 \subsubsection{Beta and incomplete beta functions}
 For $\alpha>0$ and $\beta>0$, the incomplete beta function is defined as follows:
 \begin{equation}\label{ibeta}
 	B(x;\alpha,\beta)=\int_{0}^{x}w^{\alpha-1}(1-w)^{\beta-1}\,\mathrm{d}w,\ 0<x\leq1.
 \end{equation}
 In particular, for $x=1$, it reduces to the beta function defined by
 \begin{equation}\label{beta}
 	B(\alpha,\beta)=\int_{0}^{1}w^{\alpha-1}(1-w)^{\beta-1}\,\mathrm{d}w=\frac{\Gamma(\alpha)\Gamma(\beta)}{\Gamma(\alpha+\beta)},
 \end{equation}
 where $\Gamma(\cdot)$ is the gamma function.
 
 Further, on applying the Taylor series expansion, we have
 $
 (1-w)^{\beta-1}=1+(1-\beta)w+O(w^2)\ \ \text{as}\ \ w\to 0.
 $ So, we get the following approximation of the incomplete beta function:
 \begin{equation}\label{incbetalim}
 	B(x;\alpha,\beta)=\frac{x^\alpha}{\alpha}+(1-\beta)\frac{x^{\alpha+1}}{\alpha+1}+O(x^{\alpha+2})\ \ \text{as}\ \ x\to0.
 \end{equation}
 Here, $f(x)=O(g(x))$ for some positive function $g(\cdot)$ implies that there exist $C>0$ such that $|f(x)|\leq Cg(x)$ for all $x$.
 \subsubsection{Generalized Wright function} For $a_i$, $b_j\in\mathbb{R}$ and $\alpha_i$, $\beta_j\in\mathbb{R}-\{0\}$, $i\in\{1,2,\dots,m\}$, $j\in\{1,2,\dots,l\}$, the generalized Wright function is defined by the following series (see Haubold \textit{et al.} (2011)):
 \begin{equation}\label{genwrit}
 	{}_m\Psi_l\left[\begin{matrix}
 		(a_i,\alpha_i)_{1,m}\\\\
 		(b_j,\beta_j)_{1,l}
 	\end{matrix}\Bigg| x \right]={}_m\Psi_l\left[\begin{matrix}
 		(a_1,\alpha_1),\,(a_2,\alpha_2),\dots,(a_m,\alpha_m)\\\\
 		(b_1,\beta_1),\,(b_2,\beta_2),\dots,(b_l,\beta_l)
 	\end{matrix}\Bigg| x \right]=\sum_{n=0}^{\infty}\frac{\prod_{i=1}^{m}\Gamma(a_i+n\alpha_i)x^n}{\prod_{j=1}^{l}\Gamma(b_j+n\beta_j)n!},\ x\in\mathbb{R}.
 \end{equation}
 For $m=0$ and $l=1$, it reduces to the Wright function defined as follows:
 \begin{equation}\label{wright}
 	W_{\beta,b}(x)=\sum_{n=0}^{\infty}\frac{x^n}{\Gamma(n+1)\Gamma(n\beta+b)},\ x\in\mathbb{R}.
 \end{equation}	
 \subsubsection{Airy function} The Airy function is defined by the following integral:
 \begin{equation}\label{airy}
 	\mathcal{A}i(x)=\frac{1}{\pi}\int_{0}^{\infty}\cos\left(wx+\frac{w^3}{3}\right)\,\mathrm{d}w=\frac{\sqrt{x}}{3}\left(I_{-1/3}\left({2x^{3/2}}/{3}\right)-I_{1/3}\left({2x^{3/2}}/{3}\right)\right),\ x\in\mathbb{R},
 \end{equation} 
 where $I_\nu(\cdot)$ is the modified Bessel function of first kind of order $\nu$.
 
 \subsubsection{Generalized Mittag-Leffler function} For $\alpha>0$, $\beta>0$ and $\gamma>0$, the generalized Mittag-Leffler function is defined by the following absolutely convergent series:
 \begin{equation}\label{mittag}
 	E_{\alpha,\beta}^\gamma(x)=\sum_{n=0}^{\infty}\frac{(\gamma)_nx^n}{\Gamma(n\alpha+\beta)n!},\ x\in\mathbb{R},
 \end{equation}
 where $(\gamma)_{n}=\Gamma(\gamma+n)/\Gamma(\gamma)$ is the rising factorial. Its integer order derivative is given by 
 \begin{equation}\label{der2mittag}
 	\frac{\mathrm{d}^k}{\mathrm{d}x^k}E_{\alpha,\beta}^\gamma(x)=(\gamma)_kE^{\gamma+k}_{\alpha,\beta+k\alpha}(x),\ k\ge0.
 \end{equation}
 Also, we have the following result:
 \begin{equation}\label{wetlapmittag}
 	\int_{0}^{\infty}e^{-wt}t^{\beta-1}E^{\gamma}_{\alpha,\beta}(\lambda t^\alpha)\,\mathrm{d}t=\frac{w^{\alpha\gamma-\beta}}{(w^\alpha-\lambda)^\gamma},\ |w|>|\lambda|^{\alpha^{-1}}.
 \end{equation}
 
 For $\gamma=1$, (\ref{mittag}) reduces to the two parameter Mittag-Leffler function. Further, for $\gamma=\beta=1$, it is called the one parameter Mittag-Leffler function given by
 \begin{equation*}
 	E_{\alpha,1}(x)=\sum_{n=0}^{\infty}\frac{x^n}{\Gamma(n\alpha+1)},\ x\in\mathbb{R}.
 \end{equation*}
 Its Laplace transform is
 \begin{equation}\label{MittagLlap}
 	\int_{0}^{\infty}e^{-sx}E_{\alpha,1}(cx^\alpha)\,\mathrm{d}x=\frac{s^{\alpha-1}}{s^\alpha-c},\ c\in\mathbb{R},\,s>0.
 \end{equation}
 Moreover, for $\alpha=1/2$, we have
 \begin{equation*}
 	E_{1/2,1}(x)=e^{x^2}\left(1+\frac{2}{\sqrt{\pi}}\int_{0}^{x}{e^{-s^2}}\,\mathrm{d}s\right),
 \end{equation*}
 which has the following asymptotic behaviour:
 \begin{equation}\label{Mittaglimit}
 	E_{1/2,1}(x)\sim 2e^{x^2}\ \ \text{as}\ |x|\to\infty.
 \end{equation}
\subsection{Fractional integral and derivative} Here, we collect  definitions and results on Riemann-Liouville integral and Caputo fractional derivative (see Kilbas \textit{et al. (2006)}). 
\subsubsection{Riemann-Liouville integral} The Riemann-Liouville fractional integral of order $\nu>0$ of an integrable function $f(\cdot)$ is defined by
\begin{equation}\label{RL}
	I^\nu_t (f(t))=\frac{1}{\Gamma(\nu)}\int_{0}^{t}\frac{f(s)\,\mathrm{d}s}{(t-s)^{1-\nu}},
\end{equation}
where $I^\nu_t$ is called the fractional integral operator of order $\nu$.
\begin{lemma}\label{lemma1}
	For $\alpha>0$, the fractional integral of $t^{\alpha-1}$ is
\end{lemma}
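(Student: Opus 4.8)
The plan is to compute $I^\nu_t\!\left(t^{\alpha-1}\right)$ directly from the definition (\ref{RL}) of the Riemann-Liouville fractional integral and reduce the resulting integral to a beta function. Writing out the definition gives
\begin{equation*}
	I^\nu_t\!\left(t^{\alpha-1}\right)=\frac{1}{\Gamma(\nu)}\int_{0}^{t}\frac{s^{\alpha-1}}{(t-s)^{1-\nu}}\,\mathrm{d}s,
\end{equation*}
and the entire task is to evaluate this integral in closed form. First I would note that the integrand is integrable near both endpoints precisely because $\alpha>0$ (controlling the singularity at $s=0$) and $\nu>0$ (controlling the singularity at $s=t$), so the expression is well defined.

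Next I would make the scaling substitution $s=tw$, so that $\mathrm{d}s=t\,\mathrm{d}w$ and $t-s=t(1-w)$ with $w$ ranging over $(0,1)$. Collecting the powers of $t$, the exponent becomes $(\alpha-1)-(1-\nu)+1=\alpha+\nu-1$, which factors out of the integral and produces the expected $t^{\alpha+\nu-1}$. What remains inside is $\int_{0}^{1}w^{\alpha-1}(1-w)^{\nu-1}\,\mathrm{d}w$, which is exactly the beta function $B(\alpha,\nu)$ from (\ref{beta}).

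Finally I would invoke the identity $B(\alpha,\nu)=\Gamma(\alpha)\Gamma(\nu)/\Gamma(\alpha+\nu)$ recorded in (\ref{beta}); the factor $\Gamma(\nu)$ cancels against the $1/\Gamma(\nu)$ prefactor in the definition, leaving
\begin{equation*}
	I^\nu_t\!\left(t^{\alpha-1}\right)=\frac{\Gamma(\alpha)}{\Gamma(\alpha+\nu)}\,t^{\alpha+\nu-1}.
\end{equation*}
This is an entirely routine calculation with no real obstacle; the only point requiring any care is the bookkeeping of the exponent of $t$ under the substitution and the recognition that the reduced integral is a beta integral rather than needing any further evaluation.
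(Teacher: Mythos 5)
Your proof is correct: the substitution $s=tw$ reduces the Riemann--Liouville integral to the beta integral $B(\alpha,\nu)$, and the identity $B(\alpha,\nu)=\Gamma(\alpha)\Gamma(\nu)/\Gamma(\alpha+\nu)$ gives exactly the stated formula, with the exponent bookkeeping $(\alpha-1)-(1-\nu)+1=\alpha+\nu-1$ done correctly. The paper itself states this lemma without proof (it is quoted as a known result from Kilbas \textit{et al.} (2006)), and your argument is precisely the standard derivation that underlies it, so there is nothing to add beyond noting that your observation on integrability at both endpoints (requiring $\alpha>0$ and $\nu>0$) is the right justification for the computation being legitimate.
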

\begin{equation}
	I_t^\nu (t^{\alpha-1})=\frac{\Gamma(\alpha)}{\Gamma(\alpha+\nu)}t^{\alpha+\nu-1},\ t>0.
\end{equation}
\subsubsection{Caputo fractional derivative} The Caputo fractional derivative of order $\nu\in(0,1]$ of a function $f(\cdot)$ is define as follows:
\begin{equation}\label{Caputoder}
	\frac{\mathrm{d}^\nu}{\mathrm{d}t^\nu}f(t)=\begin{cases}
		\frac{1}{\Gamma(1-\nu)}\int_{0}^{t}\frac{\frac{\mathrm{d}}{\mathrm{d}s}f(s)}{(t-s)^\nu}\,\mathrm{d}s,\ 0<\nu<1,\\
		\frac{\mathrm{d}}{\mathrm{d}t}f(t),\ \nu=1.
	\end{cases}
\end{equation}
Its Laplace transform is given by 
\begin{equation}\label{Caputoderlap}
	\int_{0}^{\infty}e^{-st}\frac{\mathrm{d}^\nu}{\mathrm{d}t^\nu}f(t)\,\mathrm{d}t=s^\nu\int_{0}^{\infty}e^{-st}f(t)\,\mathrm{d}t-s^{1-\nu}f(0).
\end{equation}

\subsection{Stable subordinator and its inverse} A subordinator $\{S_\nu(t),\ t\ge0\}$, $\nu\in(0,1)$ is a one dimensional L\'evy process with non-decreasing sample path. Its Laplace transform is 
$
	\mathbb{E}e^{-zS_\nu(t)}=e^{-tz^\nu}.
$

The inverse stable subordinator
$
L_\nu(t)=\mathrm{inf}\{s>0:S_\nu(s)>t\},\ t>0
$
is the time when $S_\nu(\cdot)$ crosses the level $t>0$ for the first time. Let $\psi_{L_\nu}(\cdot,t)$ denote the density function of $L_\nu(t)$.
Then, its Laplace transforms are given as follows (see Meerschaert \textit{et. al.} (2011)):
\begin{equation}\label{subdenlap1}
	\int_{0}^{\infty}e^{-zt}\psi_{L_\nu}(x,t)\,\mathrm{d}t=z^{\nu-1}e^{-xz^\nu},\ z>0
\end{equation}
and
$ \int_{0}^{\infty}e^{-zx}\psi_{L_\nu}(x,t)\,\mathrm{d}x=E_{\nu,1}(-zt^\nu),
$
where $E_{\nu,1}(\cdot)$ is the one parameter Mittag-Leffler function.

\subsection{Adomian decomposition method} Here, we give a brief description of the Adomian decomposition method. 

Let us consider the following functional equation:
\begin{equation}\label{1.1}
	u=\phi+T(u),
\end{equation}
where $T$ is a nonlinear operator and $\phi$ is some known function. It is assumed that the solution $u$ and  the nonlinear term $T(u)$ of (\ref{1.1}) can be expressed in the form of absolutely convergent series $u=\sum_{n=0}^{\infty}u_n$ and $T(u)=\sum_{n=0}^{\infty}A_n(u_0,u_1,\ldots,u_n)$, respectively (see Adomian (1986), (1994)). Here, $A_n$ denotes the $n$th Adomian polynomial in terms of $u_0,u_1,\ldots,u_n$. Thus, (\ref{1.1}) can be rewritten as
\begin{equation*}
\sum_{n=0}^{\infty}u_n=\phi+\sum_{n=0}^{\infty}A_n(u_0,u_1,\dots,u_n).
\end{equation*}
In Adomian decomposition method, the series components $u_n$'s are obtained by the recursive relation
$
	u_0=\phi$ and $u_n=A_{n-1}(u_0,u_1,\dots,u_{n-1}).
$
The computation of the Adomian polynomials is a crucial part of the Adomian decomposition method. Moreover, if $T$ is linear, for example, $T(u)=u$ then $A_n$ simply reduces to $u_n$. For more details on these polynomials, we refer the reader to Rach (1984), Duan ((2010), (2011)), Kataria and Vellaisamy (2016).

\section{Poisson random fields}\label{sec3}
In this section, we introduce and study a fractional variant of the Poisson random field (PRF). First, we briefly discuss the homogeneous PRF and a time-changed version of it, namely, the fractional Poisson field studied by Leonenko and Merzbach (2015). For more details on PRF and its fractional version, we refer the reader to Merzbach and Nualart (1986), Stoyen \textit{et al.} (1995), Aletti \textit{et al.} (2018). 

Let $\mathcal{B}_{\mathbb{R}^d}$ be the Borel sigma algebra on $\mathbb{R}^d$, $d\ge1$, and let $|B|$ denote the Lebesgue measure of set $B\in\mathcal{B}_{\mathbb{R}^d}$. Then, the PRF on $\mathbb{R}^d$ is an integer valued random measure $\{\mathcal{N}(B),\ B\in\mathcal{B}_{\mathbb{R}^d}\}$, where $\mathcal{N}(B)$ denotes the random number of points inside a Borel set $B$. It is also called the spatial Poisson point process. It can be characterized as follows (see Stoyen \textit{et al.} (1995), Baddeley (2007)):

\noindent $(i)$ There exist a $\lambda>0$ such that, for $B\subset{\mathbb{R}^d}$ with $|B|<\infty$, the random points count $N(B)$ in set $B$ has Poisson distribution with mean $\lambda|B|$, that is,
\begin{equation}\label{dprfdist}
	\mathrm{Pr}\{\mathcal{N}(B)=k\}=\frac{e^{-\lambda|B|}(\lambda|B|)^k}{k!},\ k\ge0,
\end{equation}
\noindent $(ii)$ for any finite collection $\{B_1,B_2,\dots,B_m\}$ of disjoint  sets, the random variables $\mathcal{N}(B_1)$, $\mathcal{N}(B_2),\dots,\mathcal{N}(B_m)$ are independent  of each other.

Here, $\lambda$ is called the parameter of PRF. The void probability of PRF is $\mathrm{Pr}\{\mathcal{N}(B)=0\}=e^{-\lambda|B|}$. Also, the contact distribution function of PRF is given by
$
	H_B(a)=1-\mathrm{Pr}\{\mathcal{N}(aB)=0\},\ a>0,
$
where $B$ is a Borel set with positive measure such that $a_1B\subset a_2B$ whenever $a_1< a_2$.

 Equivalently, it can be defined using the infinitesimal probabilities as follows (see Stoyen \textit{et al.} (1995)): 
 
 Let $B$ be a Borel set with infinitesimal Lebesgue measure, that is, $o(|B|)/|B|\to0$ as $|B|\to0$. Then, the probability of no point in $B$ is $\mathrm{Pr}\{\mathcal{N}(B)=0\}=1-\lambda|B|+o(|B|)$, probability of exactly one point in $B$ is $\mathrm{Pr}\{\mathcal{N}(B)=1\}=\lambda|B|+o(|B|)$ and the probability of more than one points in $B$ is $\mathrm{Pr}\{\mathcal{N}(B)\ge1\}=o(|B|)$.

 \subsection{Poisson random field on $\mathbb{R}^2_+$} From this point onward, we shall restrict our self to the case of $d=2$. Let us consider the PRF $\{\mathcal{N}(t_1,t_2), (t_1,t_2)\in\mathbb{R}^2_+\}$ on $\mathbb{R}^2_+$ which is defined as
$
 	\mathcal{N}(t_1,t_2)\coloneqq\mathcal{N}([0,t_1]\times[0,t_2])$, $ (t_1,t_2)\in\mathbb{R}_+^2.
$
 Here, $\mathcal{N}(t_1,t_2)$ denotes the total number of random points inside the rectangle $[0,t_1]\times[0,t_2]$. 
 The following properties hold for PRF on $\mathbb{R}^2_+$  (see Merzbach and Nualart (1986)):
 
 Let $(\Omega,\mathcal{E},P)$ be a probability space and $\{\mathcal{E}_{t_1,t_2},(t_1,t_2)\in\mathbb{R}^2_+\}$ be a collection of sub-sigma algebras of $\mathcal{E}$ such that \\
 \noindent $(i)$ $\mathcal{E}_{0,0}$ contains all the null sets in $\mathcal{E}$,\\
 \noindent $(ii)$ $\mathcal{E}_{\tau_1,\tau_2}\subseteq \mathcal{E}_{t_1,t_2}$ whenever $\tau_1\leq t_1$ and $\tau_2\leq t_2$,\\
 \noindent $(iii)$ for all $r\in\mathbb{R}^2_+$, $\mathcal{E}_r=\cap_{r\prec r'}\mathcal{E}_{r'}$. Here, $r=(\tau_1,\tau_2)\prec r'=(t_1,t_2)$ denotes the partial ordering on $\mathbb{R}^2_+$, that is, $r\prec r'$ implies $\tau_1\leq t_1$ and $\tau_2\leq t_2$.
 
For $(\tau_1,\tau_2)\prec (t_1,t_2)$, the increment of PRF on rectangle $(\tau_1,t_1]\times (\tau_2,t_2]$ is given by
\begin{equation*}
	\mathcal{N}((\tau_1,t_1]\times(\tau_2,t_2])=\mathcal{N}(t_1,t_2)-\mathcal{N}(\tau_1,t_2)-\mathcal{N}(t_1,\tau_2)+\mathcal{N}(\tau_1,\tau_2).
\end{equation*}
 The PRF $\{\mathcal{N}(t_1,t_2),\ (t_1,t_2)\in\mathbb{R}^2_+\}$ on $\mathbb{R}^2_+$ is a c\`adl\`ag field adapted to $\{\mathcal{E}_{t_1,t_2}, (t_1,t_2)\in\mathbb{R}^2_+\}$ such that $\mathcal{N}(0,t_2)=\mathcal{N}(t_1,0)=0$ with probability one. The increment $\mathcal{N}((\tau_1,t_1]\times(\tau_1,t_2])$ is a Poisson random variable with mean $\lambda(t_1-\tau_1)(t_2-\tau_2)$. Moreover, on taking $\tau_1=\tau_2=0$, we get the probability mass function (pmf) $p(k,t_1,t_2)=\mathrm{Pr}(\mathcal{N}(t_1,t_2)=k)$ of PRF as follows:
 \begin{equation}\label{prfdist}
 	p(k,t_1,t_2)=\frac{e^{-\lambda t_1t_2}(\lambda t_1t_2)^k}{k!},\ k=0,1,2,\dots.
 \end{equation}
On taking the derivatives on both sides of (\ref{prfdist}) with respect to $t_2$ and $t_1$, successively, we get the following system of partial differential equations that governs the pmf of PRF:
 	\begin{equation}\label{pmfequ1}
 		\frac{\partial^2}{\partial t_1\partial t_2}p(k,t_1,t_2)=\lambda(k+1)p(k+1,t_1,t_2)-\lambda(2k+1) p(k,t_1,t_2)+\lambda k p(k-1,t_1,t_2),\ k\ge0,
 	\end{equation} 
 	with initial condition $p(0,0,0)=1$. Here, $p(-1,t_1,t_2)=0$ for all $(t_1,t_2)\in\mathbb{R}^2_+$.

 \begin{remark}
 	From (\ref{pmfequ1}), the differential equation that governs the probability generating function (pgf) $\mathcal{G}(z,t_1,t_2)=\mathbb{E}z^{\mathcal{N}(t_1,t_2)}$, $|z|\leq1$ of PRF is 
 	\begin{equation}\label{pgf}
 		\frac{\partial^2}{\partial t_1\partial t_2}\mathcal{G}(z,t_1,t_2)=\lambda(z-1)^2\frac{\partial}{\partial z}\mathcal{G}(z,t_1,t_2)+\lambda(z-1)\mathcal{G}(z,t_1,t_2)
 	\end{equation}
 	with $\mathcal{G}(z,0,t_2)=\mathcal{G}(z,t_1,0)=1$ for all $t_1\ge0$ and $t_2\ge0$.
 \end{remark} 
\subsection{Fractional Poisson random field}
Let $\{L_{\nu_1}(t)\}_{t\ge0}$, $0<\nu_1<1$ and  $\{L_{\nu_1}(t)\}_{t\ge0}$, $0<\nu_2<1$ be two independent inverse stable subordinators, and let $\{\mathcal{N}(t_1,t_2),\ (t_1,t_2)\in\mathbb{R}_+^2\}$ be the PRF which is independent of $L_{\nu_1}(t_1)$ and $L_{\nu_2}(t_2)$. Leonenko and Merzbach (2015)  introduced and studied a fractional variant of  the PRF on $\mathbb{R}^2_+$, namely, the fractional Poisson field. It is defined as follows:
\begin{equation}\label{def1}
	\mathcal{N}_{\nu_1,\nu_2}(t_1,t_2)=\mathcal{N}(L_{\nu_1}(t_1),L_{\nu_2}(t_2)),\ (t_1,t_2)\in\mathbb{R}_+^2.
\end{equation}
Recently, Aletti \textit{et al.} (2018) studied this process and obtained various characterizations for it.	Its pmf $p_{\nu_1,\nu_2}(k,t_1,t_2)=\mathrm{Pr}\{\mathcal{N}_{\nu_1,\nu_2}(t_1,t_2)=k\}$, $k\ge0$ is given by 
	\begin{equation*}
		p_{\nu_1,\nu_2}(k,t_1,t_2)=\frac{\lambda^k}{k!\nu_1\nu_2}\int_{0}^{\infty}\int_{0}^{\infty}e^{-\lambda s_1s_2}s_1^{k-1}s_2^{k-1}W_{-\nu_1,0}(-s_1/t_1^{\nu_1})W_{-\nu_2,0}(-s_2/t_2^{\nu_2})\,\mathrm{d}s_1\,\mathrm{d}s_2,
	\end{equation*}
	where $W_{\mu,\nu}(\cdot)$ is the Wright function defined in (\ref{wright}).

	Here, we define a fractional variant of  the PRF using a system of fractional partial differential equations. We call this fractional variant as the fractional Poisson random field (FPRF) and denote it by $\{\mathscr{N}_{\nu_1,\nu_2}(t_1,t_2),(t_1,t_2)\in\mathbb{R}^2_+\}$, $0<\nu_1\leq1,\,0<\nu_2\leq1$.  We show that it is equal in distribution to the process defined in (\ref{def1}). It is a random field whose pmf $q_{\nu_1,\nu_2}(k,t_1,t_2)=\mathrm{Pr}\{\mathscr{N}_{\nu_1,\nu_2}(t_1,t_2)=k\}$ solves the following system of fractional partial differential equations:
	\begin{align}\label{pmfequ2}
	\frac{\partial^{\nu_1+\nu_2}}{\partial t_1^{\nu_1}\partial t_2^{\nu_2}}q_{\nu_1,\nu_2}(t_1,t_2)&=\lambda(k+1)q_{\nu_1,\nu_2}(k+1,t_1,t_2)-\lambda(2k+1)q_{\nu_1,\nu_2}(k,t_1,t_2)\nonumber\\
	&\ \  +\lambda kq_{\nu_1,\nu_2}(k-1,t_1,t_2),\ k\ge0,
	\end{align}
with initial conditions $q_{\nu_1,\nu_2}(0,0,t_2)=q_{\nu_1,\nu_2}(0,0,t_2)=1$ for all $t_1\ge0$ and $t_2\ge0$. Here, the derivative involve in (\ref{pmfequ2}) is the Caputo fractional derivative as defined in (\ref{Caputoder}).

So, the pgf $\mathscr{G}_{\nu_1,\nu_2}(z,t_1,t_2)=\mathbb{E}z^{\mathscr{N}_{\nu_1,\nu_2}(t_1,t_2)}$, $|z|\leq1$ of FPRF solves 
\begin{equation}\label{fprfpgfeq}
	\frac{\partial^{\nu_1+\nu_2}}{\partial t_1^{\nu_1}\partial t_2^{\nu_2}}\mathscr{G}_{\nu_1,\nu_2}(z,t_1,t_2)=\lambda(z-1)^2\frac{\partial}{\partial z}\mathscr{G}_{\nu_1,\nu_2}(z,t_1,t_2)+\lambda(z-1)\mathscr{G}_{\nu_1,\nu_2}(z,t_1,t_2)
\end{equation}
with initial conditions $\mathscr{G}_{\nu_1,\nu_2}(z,0,t_2)=1$ and $\mathscr{G}_{\nu_1,\nu_2}(z,t_1,0)=1$.
On taking the derivative with respect to $z$ on both sides of (\ref{fprfpgfeq}) and substituting $z=1$, we obtain 
	\begin{equation}\label{meanequ}
		\frac{\partial^{\nu_1+\nu_2}}{\partial t_1^{\nu_1}\partial t_2^{\nu_2}}\mathbb{E}\mathscr{N}_{\nu_1,\nu_2}(t_1,t_2)=\lambda,
	\end{equation}
	with $\mathbb{E}\mathscr{N}_{\nu_1,\nu_2}(0,t_2)=\mathbb{E}\mathscr{N}_{\nu_1,\nu_2}(t_1,0)=0$. On solving (\ref{meanequ}), we get 
	\begin{equation}\label{fprfmean}
		\mathbb{E}\mathscr{N}_{\nu_1,\nu_2}(t_1,t_2)=\frac{\lambda t_1^{\nu_1}t_2^{\nu_2}}{\Gamma(\nu_1+1)\Gamma(\nu_2+1)}.
	\end{equation}
	
	 Let $\mu_2(t_1,t_2)\coloneqq \mathbb{E}\mathscr{N}_{\nu_1,\nu_2}(t_1,t_2)(\mathscr{N}_{\nu_1,\nu_2}(t_1,t_2)-1)$ be the second factorial moment of FPRF. On taking the derivative with respect to $z$ twice on both sides of (\ref{fprfpgfeq}) and substituting $z=1$, we get 
	\begin{equation}\label{2fact}
		\frac{\partial^{\nu_1+\nu_2}}{\partial t_1^{\nu_1}\partial t_2^{\nu_2}}\mu_2(t_1,t_2)=4\lambda\mathbb{E}\mathscr{N}_{\nu_1,\nu_2}(t_1,t_2),
	\end{equation}
	with $\mu_2(0,t_2)=\mu_2(t_1,0)=0$. On solving (\ref{2fact}), we get
	$$
		\mu_2(t_1,t_2)=\frac{(2\lambda t_1^{\nu_1}t_2^{\nu_2})^2}{\Gamma(2\nu_1+1)\Gamma(2\nu_2+1)}.
$$ Hence, the variance of FPRF is
\begin{equation}\label{varN}
	\mathbb{V}\mathrm{ar}\mathscr{N}_{\nu_1,\nu_2}(t_1,t_2)=\frac{\lambda t_1^{\nu_1}t_2^{\nu_2}}{\Gamma(\nu_1+1)\Gamma(\nu_2+1)}+\frac{(2\lambda t_1^{\nu_1}t_2^{\nu_2})^2}{\Gamma(2\nu_1+1)\Gamma(2\nu_2+1)}-\left(\frac{\lambda t_1^{\nu_1}t_2^{\nu_2}}{\Gamma(\nu_1+1)\Gamma(\nu_2+1)}\right)^2.
\end{equation}
\begin{remark}
	The mean and variance of FPRF given in (\ref{fprfmean}) and (\ref{varN}), respectively, coincide with the mean and variance of fractional Poisson field studied by Leonenko and Merzbach (2015). For $\nu_1=\nu_2=1$, the FPRF reduces to the PRF whose mean and variance are equal, that is,
	$
	\mathbb{E}\mathcal{N}(t_1,t_2)=\mathbb{V}\mathrm{ar}\mathcal{N}(t_1,t_2)=\lambda t_1t_2.
	$
\end{remark}

Next, we give a connection between the FPRF and PRF via a bivariate random process.
 Let $\{T_{2\nu_i}(t),\ t\ge0\}$, $0<\nu_i\leq1$, $i=1,2$ 
be two independent random processes whose corresponding densities are folded solutions of the following Cauchy problems:
\begin{equation}\label{couchyp}
		\frac{\partial^{2\nu_i}}{\partial t^{2\nu_i}}u_{2\nu_i}(x,t)=\frac{\partial^2}{\partial x^2}u_{2\nu_i}(x,t),\ t>0,\,x\in\mathbb{R},
		\end{equation}	
with initial condition $u_{2\nu_i}(x,0)=\delta(x),\ 0<\nu_i\leq1$ and also $(\partial/\partial t)u_{2\nu_i}(x,t)|_{t=0}=0,\ \frac{1}{2}<\nu_i\leq1$.
 Here, $\delta(x)$ is the Dirac delta function. On taking $c=1$ in Eq. (3.3) of Orsingher and Beghin (2004), the Laplace transform of $T_{2\nu_i}(t)$, $i=1,2$ is given by
\begin{equation}\label{fold}
	\int_{0}^{\infty}e^{-wt}\mathrm{Pr}\{T_{2\nu_i}(t)\in\mathrm{d}s\}\,\mathrm{d}t=w^{\nu_i-1}e^{-w^{\nu_i}s}\,\mathrm{d}s.
\end{equation}
\begin{theorem}\label{diffrelation}
Let $\{T_{2\nu_i}(t),\ t\ge0\}$, $i=1,2$ be two independent random processes whose Laplace transforms are given by (\ref{fold}), and let these be independent of the PRF $\{\mathcal{N}(t_1,t_2),\ (t_1,t_2)\in\mathbb{R}^2_+\}$. Then, the pgf of FPRF has following representation:
	\begin{equation}\label{pgf1}
		\mathscr{G}_{\nu_1,\nu_2}(z,t_1,t_2)=\int_{0}^{\infty}\int_{0}^{\infty}\mathcal{G}(z,s_1,s_2)\mathrm{Pr}\{T_{2\nu_1}(t_1)\in\mathrm{d}s_1\}\mathrm{Pr}\{T_{2\nu_2}(t_2)\in\mathrm{d}s_2\},
	\end{equation} 
	where $\mathcal{G}(z,s_1,s_2)$ is the pgf of PRF. Thus, 
	\begin{equation}\label{foldrelation}
		\mathscr{N}_{\nu_1,\nu_2}(t_1,t_2)\overset{d}{=} \mathcal{N}(T_{2\nu_1}(t_1),T_{2\nu_2}(t_2)),\ (t_1,t_2)\in\mathbb{R}^2_+,
	\end{equation}
	where $\overset{d}{=}$ denotes the equality in distribution.
\end{theorem}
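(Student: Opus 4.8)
The plan is to show that the right-hand side of (\ref{pgf1}) and the pgf $\mathscr{G}_{\nu_1,\nu_2}$ solve one and the same transformed equation, and hence coincide by uniqueness. Write $H(z,t_1,t_2)$ for the double integral on the right of (\ref{pgf1}), and let $\hat{H}(z,w_1,w_2)$, $\hat{\mathscr{G}}(z,w_1,w_2)$ denote the double Laplace transforms in $(t_1,t_2)\mapsto(w_1,w_2)$. First I would compute $\hat{H}$: by the assumed independence of $T_{2\nu_1}(t_1)$ and $T_{2\nu_2}(t_2)$ and Fubini, the $t_i$-integration acts only on the density factors, so (\ref{fold}) gives
\[
\hat{H}(z,w_1,w_2)=w_1^{\nu_1-1}w_2^{\nu_2-1}\,\hat{\mathcal{G}}(z,w_1^{\nu_1},w_2^{\nu_2}),
\]
where $\hat{\mathcal{G}}(z,u_1,u_2)=\int_0^\infty\int_0^\infty e^{-u_1 s_1-u_2 s_2}\mathcal{G}(z,s_1,s_2)\,\mathrm{d}s_1\,\mathrm{d}s_2$ is the Laplace transform of the PRF pgf in its spatial variables.

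Next I would transform the governing equation (\ref{pgf}) of the PRF pgf. Since $\mathcal{G}(z,s_1,0)=\mathcal{G}(z,0,s_2)=1$ and $\mathcal{G}(z,0,0)=1$, the double Laplace transform of the cross derivative $\partial^2\mathcal{G}/\partial s_1\partial s_2$ reduces to $u_1u_2\hat{\mathcal{G}}-1$ (the two mixed boundary integrals being $u_i^{-1}$ and the corner term being $1$), so that
\[
u_1u_2\hat{\mathcal{G}}(z,u_1,u_2)-1=\lambda(z-1)^2\frac{\partial}{\partial z}\hat{\mathcal{G}}(z,u_1,u_2)+\lambda(z-1)\hat{\mathcal{G}}(z,u_1,u_2).
\]
Substituting $u_1=w_1^{\nu_1}$, $u_2=w_2^{\nu_2}$, multiplying through by $w_1^{\nu_1-1}w_2^{\nu_2-1}$, and using the identity for $\hat{H}$ above together with $\partial_z\hat{H}=w_1^{\nu_1-1}w_2^{\nu_2-1}\partial_z\hat{\mathcal{G}}$, I obtain
\[
w_1^{\nu_1}w_2^{\nu_2}\hat{H}-w_1^{\nu_1-1}w_2^{\nu_2-1}=\lambda(z-1)^2\frac{\partial}{\partial z}\hat{H}+\lambda(z-1)\hat{H}.
\]

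Finally I would transform the FPRF pgf equation (\ref{fprfpgfeq}) itself. Iterating the Caputo rule (\ref{Caputoderlap}) in each variable and using that the initial data $\mathscr{G}_{\nu_1,\nu_2}(z,0,t_2)=\mathscr{G}_{\nu_1,\nu_2}(z,t_1,0)=1$ are constant (so their $\nu$-order Caputo derivatives vanish and $\mathcal{L}[1]=w^{-1}$), the transform of the mixed fractional derivative collapses to $w_1^{\nu_1}w_2^{\nu_2}\hat{\mathscr{G}}-w_1^{\nu_1-1}w_2^{\nu_2-1}$; hence $\hat{\mathscr{G}}$ satisfies exactly the same equation as $\hat{H}$. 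For each fixed $(w_1,w_2)$ this is a first-order linear ODE in $z$, and both $\hat{\mathscr{G}}$ and $\hat{H}$ take the common value $1/(w_1w_2)$ at $z=1$ (because $\mathcal{G}(1,\cdot,\cdot)\equiv 1$ forces $\hat{\mathcal{G}}(1,u_1,u_2)=(u_1u_2)^{-1}$). Uniqueness for this initial value problem gives $\hat{\mathscr{G}}=\hat{H}$, and Laplace inversion yields $\mathscr{G}_{\nu_1,\nu_2}=H$, which is (\ref{pgf1}). The distributional identity (\ref{foldrelation}) is then immediate: conditioning on $(T_{2\nu_1}(t_1),T_{2\nu_2}(t_2))$ and invoking independence shows the right side of (\ref{pgf1}) is precisely the pgf of $\mathcal{N}(T_{2\nu_1}(t_1),T_{2\nu_2}(t_2))$, and equality of pgfs forces equality in distribution.

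I expect the main obstacle to be the correct bookkeeping of the boundary terms when Laplace-transforming the mixed integer and fractional cross derivatives; this is exactly the point where the \emph{constancy} of the initial data is essential, since it annihilates the cross boundary contribution $\partial_{t_2}^{\nu_2}\mathscr{G}_{\nu_1,\nu_2}(z,0,t_2)=0$ and leaves only the single clean term $w_1^{\nu_1-1}w_2^{\nu_2-1}$. The remaining care is routine: justifying Fubini and the interchange of $\partial_z$ with the integral defining $H$, and appealing to uniqueness of the Laplace transform for the final inversion.
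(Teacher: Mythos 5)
Your proposal follows essentially the same route as the paper's proof: you take double Laplace transforms of the candidate representation via (\ref{fold}), transform the PRF equation (\ref{pgf}) and the fractional equation (\ref{fprfpgfeq}), and identify both transforms as solutions of the same transformed equation (the paper's (\ref{cr1})), concluding by uniqueness and inversion. If anything, your closing step is slightly more careful than the paper's bare appeal to ``the uniqueness of pgf,'' since you make the uniqueness concrete by viewing (\ref{cr1}) as a first-order ODE in $z$ with the common initial value $1/(w_1w_2)$ at $z=1$ before inverting the Laplace transform.
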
	
\begin{proof}
	On taking the Laplace transforms on the right hand side of (\ref{pgf1}) with respect to variables $t_1$ and $t_2$, successively, and using (\ref{fold}), we get
	\begin{align}\label{pgflap1}		
		\mathcal{L}(z,w_1,w_2)=w_1^{\nu_1-1}w_2^{\nu_2-1}\int_{0}^{\infty}\int_{0}^{\infty}\mathcal{G}(z,s_1,s_2)e^{-w_1^{\nu_1}s_1}e^{-w_2^{\nu_2}s_2}\,\mathrm{d}s_1\,\mathrm{d}s_2.
	\end{align}	
	On taking the Laplace transform with respect to $t_1$ on both sides of (\ref{pgf}), we get
	\begin{align*}
		w_1\frac{\partial}{\partial t_2}\int_{0}^{\infty}e^{-w_1t_1}\mathcal{G}(z,t_1,t_2)\,\mathrm{d}t_1&=\lambda(z-1)^2\frac{\partial}{\partial z}\int_{0}^{\infty}e^{-w_1t_1}\mathcal{G}(z,t_1,t_2)\,\mathrm{d}t_1\\
		&\ \ +\lambda(z-1)\int_{0}^{\infty}e^{-w_1t_1}\mathcal{G}(z,t_1,t_2)\,\mathrm{d}t_1,
	\end{align*}
	where we have used $\mathcal{G}(z,0,t_2)=1$. Its Laplace transform with respect to the variable $t_2$ gives
	\begin{align}\label{pgflap2}
		w_1w_2\int_{0}^{\infty}\int_{0}^{\infty}e^{-w_1t_1}e^{-w_2t_2}&\mathcal{G}(z,t_1,t_2)\,\mathrm{d}t_1\,\mathrm{d}t_2-1\nonumber\\
		&=\lambda(z-1)^2\frac{\partial}{\partial z}\int_{0}^{\infty}\int_{0}^{\infty}e^{-w_1t_1}e^{-w_2t_2}\mathcal{G}(z,t_1,t_2)\,\mathrm{d}t_1\,\mathrm{d}t_2\nonumber\\
		&\ \ +\lambda(z-1)\int_{0}^{\infty}\int_{0}^{\infty}e^{-w_1t_1}e^{-w_2t_2}\mathcal{G}(z,t_1,t_2)\,\mathrm{d}t_1\,\mathrm{d}t_2,
	\end{align}
	where we have used $\mathcal{G}(z,w_1,0)=1/w_1$. On using (\ref{pgflap1}) in (\ref{pgflap2}), we get
	\begin{equation}\label{cr1}
		w_1^{\nu_1}w_2^{\nu_2}\mathcal{L}(z,w_1,w_2)-w_1^{\nu_1-1}w_2^{\nu_2-1}=\lambda(z-1)^2\frac{\partial}{\partial z}\mathcal{L}(z,w_1,w_2)+\lambda(z-1)\mathcal{L}(z,w_1,w_2).
	\end{equation}
	Again, on taking the Laplace transform on both sides of (\ref{fprfpgfeq}) with respect to $t_1$ and $t_2$, successively, we get
	\begin{align}
		w_1^{\nu_1}w_2^{\nu_2}\int_{0}^{\infty}\int_{0}^{\infty}e^{-w_2t_2}e^{-w_1t_1}&\mathscr{G}_{\nu_1,\nu_2}(z,t_1,t_2)\,\mathrm{d}t_1\,\mathrm{d}t_2-w_1^{\nu_1-1}w_2^{\nu_2-1}\nonumber\\
		&=\lambda(z-1)^2\frac{\partial}{\partial z}\int_{0}^{\infty}\int_{0}^{\infty}e^{-w_2t_2}e^{-w_1t_1}\mathscr{G}_{\nu_1,\nu_2}(z,t_1,t_2)\,\mathrm{d}t_1\,\mathrm{d}t_2\nonumber\\
		&\ \ + \lambda(z-1)\int_{0}^{\infty}\int_{0}^{\infty}e^{-w_2t_2}e^{-w_1t_1}\mathscr{G}_{\nu_1,\nu_2}(z,t_1,t_2)\,\mathrm{d}t_1\,\mathrm{d}t_2.\label{pgflap3}
	\end{align}
 From (\ref{cr1}) and (\ref{pgflap3}), it can be observe that $\int_{0}^{\infty}\int_{0}^{\infty}e^{-w_2t_2}e^{-w_1t_1}\mathscr{G}_{\nu_1,\nu_2}(z,t_1,t_2)\,\mathrm{d}t_1\,\mathrm{d}t_2$ is a solution of (\ref{cr1}). The uniqueness of pgf completes the proof.
\end{proof}
{\begin{remark}
		For $i=1,2$, the solutions of Cauchy problems given in (\ref{couchyp}) are (see Podlubny (1999), p. 142)
		\begin{equation*}
			u_{2\nu_i}(x,t)=\frac{1}{2t^{\nu_i}}W_{-\nu_i,1-\nu_i}(-|x|t^{-\nu_i}),\ t>0,\,x\in\mathbb{R},
		\end{equation*} 
		where $W_{\alpha,\beta}(\cdot)$ is the Wright function as define in (\ref{wright}). So, the folded solution of (\ref{couchyp}) are given by (see Beghin and Orsingher (2009), Eq. (2.15))
		\begin{equation}\label{expfold}
			\tilde{u}_{2\nu_i}(x,t)=\begin{cases}
				2u_{2\nu_i}(x,t),\ x>0,\vspace{0.1cm}\\
				0,\ x<0.
			\end{cases}
		\end{equation}
		So, in view of (\ref{foldrelation}), the pmf of FPRF is given by
		\begin{equation*}
			q_{\nu_1,\nu_2}(k,t_1,t_2)=\frac{\lambda^k}{k!t_1^{\nu_1}t_2^{\nu_2}}\int_{0}^{\infty}\int_{0}^{\infty}e^{-\lambda s_1s_2}(s_1s_2)^kW_{-\nu_1,1-\nu_1}(-s_1t_1^{-\nu_1})W_{-\nu_2,1-\nu_2}(-s_2t_2^{-\nu_2})\,\mathrm{d}s_1\,\mathrm{d}s_2.
		\end{equation*}
\end{remark}}
\begin{remark}\label{rem1}
	From (\ref{foldrelation}), it can be observed that the FPRF is equal in distribution to a  time-changed PRF where the time is changed via a bivariate random process. Similar time-changed relationships have been established for some other fractional growth processes on positive real line, for example, the time fractional Poisson process (see Beghin and Orsingher (2009)), the fractional pure birth process (see Orsingher and Polito (2010)) and  the fractional linear birth-death process (see Orsingher and Polito (2011)), \textit{etc}.
	
	For $\nu_i\in(0,1)$, $i=1,2$, from (\ref{subdenlap1}) and (\ref{fold}), it follows that the distribution of $L_{\nu_i}(t)$ coincide with that of $T_{2\nu_i}(t)$ for $i=1,2$. So, the joint density function of $(L_{\nu_1}(t_1),L_{\nu_2}(t_2))$ coincide with that of $(T_{2\nu_1}(t_1),T_{2\nu_2}(t_2))$ for all $(t_1,t_2)\in\mathbb{R}^2_+$. Hence, $\mathcal{N}_{\nu_1,\nu_2}(t_1,t_2)\overset{d}{=}\mathscr{N}_{\nu_1,\nu_2}(t_1,t_2)$.
\end{remark}
\begin{remark}
	For some particular values of $\nu_i$, $i=1,2$, the distribution of $T_{2\nu_i}(t)$ can be explicitly obtained.  For example, if $\nu_i=\frac{1}{2^{r_i}}$,  $r_i\in\mathbb{N}$ then the density function of $T_{2\nu_i}(t)$ is same as that of folded $(r_i-1)$-iterated Brownian motion (see Orsingher and Beghin (2009), Theorem 2.2), that is, 
	\begin{align*}
		\mathrm{Pr}\{T_{({1}/{2})^{r_i-1}}(t)\in\mathrm{d}x\}&=\mathrm{Pr}\{|B^i_1(|B^i_2(\dots|B^i_{r_i}(t)|\dots)|)|\in\mathrm{d}x\}\\
		&=2^{r_i}\int_{0}^{\infty}\frac{e^{-x^2/4u_1}}{\sqrt{4\pi u_1}}\,\mathrm{d}u_1\int_{0}^{\infty}\frac{e^{-u_1^2/4u_2}}{\sqrt{4\pi u_2}}\,\mathrm{d}u_2\dots\int_{0}^{\infty}\frac{e^{-u_{r_i-1}^2/4t}}{\sqrt{4\pi t}}\,\mathrm{d}u_{r_i-1}\,\mathrm{d}x,
	\end{align*}
	where $\{B_j^i(t),\ t\ge0\}$, $j\in\{1,2,\dots,r_i\}$ are independent Brownian motions. 
	
	In particular, if $r_1=r_2=1$ then the distribution of $T_1(t)$ coincide with the distribution of reflecting Brownian motion. So, $\mathscr{N}_{\frac{1}{2},\frac{1}{2}}(t_1,t_2)$ is equal in distribution to $\mathcal{N}(|B^1(t_1)|,|B^2(t_2)|)$, where $\{B^1(t),\ t\ge0\}$ and $\{B^2(t),\ t\ge0\}$ are independent Brownian motions. Here, $B^i(t)$, $i=1,2$ are normal random variables with means zero and variances $2t$. Thus,
	\begin{equation*}
		\mathrm{Pr}\{\mathscr{N}_{\frac{1}{2},\frac{1}{2}}(t_1,t_2)=k\}=\frac{\lambda^k}{k!\pi\sqrt{t_1t_2}}\int_{0}^{\infty}\int_{0}^{\infty}{( w_1w_2)^k}\exp\left(-\frac{w_1^2}{4t_1}-\frac{w_2^2}{4t_2}-\lambda w_1w_2\right)\,\mathrm{d}w_1\,\mathrm{d}w_2.
	\end{equation*}	
	Moreover, for $\nu_i=1/3$, $i=1,2$, the density of $T_{2/3}(t)$ is given by (see Orsingher and Beghin (2009), Theorem 4.1)
	\begin{equation*}
		\mathrm{Pr}\{T_{2/3}(t)\in\mathrm{d}x\}/\mathrm{d}x=\frac{3}{2{(3t)}^{1/3}}\mathcal{A}i\left(\frac{|x|}{(3t)^{1/3}}\right),
	\end{equation*}
	where $\mathcal{A}i(\cdot)$ is the Airy function as defined in (\ref{airy}). Therefore,
	\begin{equation*}
		\mathrm{Pr}\{\mathscr{N}_{1/3,1/3}(t_1,t_2)=k\}=\frac{9\lambda^k}{4k!(9t_1t_2)^{1/3}}\int_{0}^{\infty}\int_{0}^{\infty}e^{-\lambda w_1w_2}(w_1w_2)^k\prod_{j=1}^{2}\mathcal{A}i\left(\frac{|w_j|}{(3t_j)^{1/3}}\right)\,\mathrm{d}w_1\,\mathrm{d}w_2.
	\end{equation*}
\end{remark}
\begin{proposition}
	Let $\nu_i\in(0,1)$, $i=1,2$. Then, for  a fixed $(\tau_1,\tau_2)\in\mathbb{R}^2_+$ such that $\tau_1<t_1$ and $\tau_2<t_2$, the covariance of $\mathscr{N}_{\nu_1,\nu_2}(\tau_1,\tau_2)$ and $\mathscr{N}_{\nu_1,\nu_2}(t_1,t_2)$ is given by
	\begin{equation}
		\mathbb{C}\mathrm{ov}(\mathscr{N}_{\nu_1,\nu_2}(\tau_1,\tau_2),\,\mathscr{N}_{\nu_1,\nu_2}(t_1,t_2))=\sum_{i=1}^{2}\frac{\lambda^i \tau_1^{\nu_1}\tau_2^{\nu_2}(- t_1^{\nu_1}t_2^{\nu_2})^{i-1}}{\Gamma^i(\nu_1+1)\Gamma^i(\nu_2+1)} +\lambda^2\prod_{i=1}^{2}\frac{\big(\tau_i^{2\nu_i}+t_i^{2\nu_i}I_{\tau_i/t_i}(\nu_i,\nu_i+1)\big)}{\Gamma(2\nu_i+1)},\label{cov1}
	\end{equation}
	where $I_x(\alpha,\beta)=B(x;\alpha,\beta)/B(\alpha,\beta)$, $0<x\leq1$. Here, $B(\alpha,\beta)$ and $B(x;\alpha,\beta)$, $\alpha>0$, $\beta>0$ are the beta and incomplete beta functions as defined in (\ref{beta}) and (\ref{ibeta}), respectively.
\end{proposition}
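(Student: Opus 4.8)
The plan is to exploit the time-changed representation (\ref{foldrelation}), working at the process level with the bivariate clock $(T_{2\nu_1},T_{2\nu_2})$, which by Remark \ref{rem1} may be realized as a pair of independent inverse stable subordinators $L_{\nu_1},L_{\nu_2}$ having nondecreasing paths. Write $R_i=T_{2\nu_i}(\tau_i)$ and $S_i=T_{2\nu_i}(t_i)$, and let $\mathcal{F}=\sigma(R_1,R_2,S_1,S_2)$. Since $\tau_i<t_i$ and the clocks are monotone, $R_i\le S_i$ almost surely, so $[0,R_1]\times[0,R_2]\subseteq[0,S_1]\times[0,S_2]$. Conditioning on $\mathcal{F}$ and using that the PRF is independent of the clock together with the Poisson-random-measure identity $\mathbb{C}\mathrm{ov}(\mathcal{N}(A),\mathcal{N}(B))=\lambda|A\cap B|$, I would obtain $\mathbb{C}\mathrm{ov}(\mathscr{N}_{\nu_1,\nu_2}(\tau_1,\tau_2),\mathscr{N}_{\nu_1,\nu_2}(t_1,t_2)\mid\mathcal{F})=\lambda R_1R_2$, while $\mathbb{E}[\mathscr{N}_{\nu_1,\nu_2}(\tau_1,\tau_2)\mid\mathcal{F}]=\lambda R_1R_2$ and $\mathbb{E}[\mathscr{N}_{\nu_1,\nu_2}(t_1,t_2)\mid\mathcal{F}]=\lambda S_1S_2$. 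The law of total covariance then gives
\begin{equation*}
\mathbb{C}\mathrm{ov}(\mathscr{N}_{\nu_1,\nu_2}(\tau_1,\tau_2),\mathscr{N}_{\nu_1,\nu_2}(t_1,t_2))=\lambda\,\mathbb{E}[R_1R_2]+\lambda^2\big(\mathbb{E}[R_1R_2S_1S_2]-\mathbb{E}[R_1R_2]\,\mathbb{E}[S_1S_2]\big).
\end{equation*}

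By the independence of the two clocks this factorizes into one-dimensional and two-dimensional moments of a single inverse stable subordinator, namely $\mathbb{E}[R_i]$, $\mathbb{E}[S_i]$ and $\mathbb{E}[R_iS_i]$. The single-time moments come directly from (\ref{fold}): multiplying by $s$ and integrating shows $\mathbb{E}[T_{2\nu_i}(\tau_i)]=\tau_i^{\nu_i}/\Gamma(\nu_i+1)$, and similarly for $t_i$. Substituting these, the term $\lambda\,\mathbb{E}[R_1]\mathbb{E}[R_2]$ reproduces the $i=1$ summand and $-\lambda^2\,\mathbb{E}[R_1]\mathbb{E}[S_1]\mathbb{E}[R_2]\mathbb{E}[S_2]$ reproduces the $i=2$ summand of $\sum_{i=1}^{2}(\cdots)$ in (\ref{cov1}); the remaining product $\lambda^2\,\mathbb{E}[R_1S_1]\,\mathbb{E}[R_2S_2]$ must therefore match the final product term of (\ref{cov1}), so everything reduces to identifying the product moment $\mathbb{E}[R_iS_i]=\mathbb{E}[T_{2\nu_i}(\tau_i)T_{2\nu_i}(t_i)]$.

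This last product moment is the main obstacle, since it requires the two-time law of the clock rather than just its marginals. I would compute it through the subordinator, using $L_\nu(t)=\int_0^\infty\mathbf{1}\{S_\nu(x)\le t\}\,\mathrm{d}x$ to write $\mathbb{E}[L_\nu(\tau)L_\nu(t)]=\int_0^\infty\int_0^\infty\mathrm{Pr}(S_\nu(x)\le\tau,\,S_\nu(y)\le t)\,\mathrm{d}x\,\mathrm{d}y$. Taking the double Laplace transform in $(\tau,t)$ turns the joint probability into $z_1^{-1}z_2^{-1}\mathbb{E}[e^{-z_1S_\nu(x)-z_2S_\nu(y)}]$; splitting the quadrant into $\{x\le y\}$ and $\{x>y\}$ and invoking the independent, stationary increments of $S_\nu$ together with $\mathbb{E}e^{-zS_\nu(u)}=e^{-uz^\nu}$ collapses the transform to
\begin{equation*}
\frac{1}{z_1z_2(z_1+z_2)^{\nu}}\Big(\frac{1}{z_1^{\nu}}+\frac{1}{z_2^{\nu}}\Big).
\end{equation*}
Writing $(z_1+z_2)^{-\nu}$ as a Gamma integral and inverting term by term, in the region $\tau<t$ one term reduces to a complete Beta integral yielding $\tau^{2\nu}/\Gamma(2\nu+1)$ and the other to a truncated Beta integral $\int_0^{\tau/t}w^{\nu-1}(1-w)^{\nu}\,\mathrm{d}w=B(\tau/t;\nu,\nu+1)$; using $B(\nu,\nu+1)=\Gamma(\nu)\Gamma(\nu+1)/\Gamma(2\nu+1)$ these combine to $\mathbb{E}[T_{2\nu_i}(\tau_i)T_{2\nu_i}(t_i)]=\big(\tau_i^{2\nu_i}+t_i^{2\nu_i}I_{\tau_i/t_i}(\nu_i,\nu_i+1)\big)/\Gamma(2\nu_i+1)$. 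Feeding this into the displayed expression for the covariance and collecting terms gives (\ref{cov1}). The delicate points to watch are the correct accounting of the two regions in the inversion and the passage from the incomplete Beta integral to the regularized function $I_x(\nu_i,\nu_i+1)$.
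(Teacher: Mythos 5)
Your proof is correct, but it takes a genuinely different and more self-contained route than the paper's. The paper's own proof is essentially two lines of citation plus one computation: it invokes Remark \ref{rem1} to identify $\mathscr{N}_{\nu_1,\nu_2}$ with the Leonenko--Merzbach field and then quotes Eq.~(26) of Leonenko and Merzbach (2015), which already expresses the covariance as the first two terms of (\ref{cov1}) plus the double integral $\frac{\lambda^2}{\nu_1\nu_2\Gamma^2(\nu_1)\Gamma^2(\nu_2)}\int_0^{\tau_1}\int_0^{\tau_2}\prod_{i=1}^{2}\big((\tau_i-x_i)^{\nu_i}+(t_i-x_i)^{\nu_i}\big)x_i^{\nu_i-1}\,\mathrm{d}x_1\,\mathrm{d}x_2$; the only work done in the paper is evaluating these one-dimensional integrals via the substitution $x_i=t_iu$ into $B(\nu_i,\nu_i+1)$ and $B(\tau_i/t_i;\nu_i,\nu_i+1)$. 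You instead re-derive that cited formula from first principles: your law-of-total-covariance decomposition $\lambda\,\mathbb{E}[R_1R_2]+\lambda^2\big(\mathbb{E}[R_1R_2S_1S_2]-\mathbb{E}[R_1R_2]\,\mathbb{E}[S_1S_2]\big)$ is exactly the probabilistic content behind Leonenko--Merzbach's Eq.~(26), and your computation of the two-time moment $\mathbb{E}[L_\nu(\tau)L_\nu(t)]$ via the indicator representation and the double Laplace transform $\frac{1}{z_1z_2(z_1+z_2)^{\nu}}\big(z_1^{-\nu}+z_2^{-\nu}\big)$ is correct --- its inversion on $\{\tau<t\}$ does give $\big(\tau^{2\nu}+t^{2\nu}I_{\tau/t}(\nu,\nu+1)\big)/\Gamma(2\nu+1)$, and the closing beta-integral manipulations coincide with the paper's. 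What your route buys is transparency and independence from the external reference: each of the three terms of (\ref{cov1}) is traced to an explicit moment of the clock. What it costs is the extra subordinator computation. One caveat applies to both arguments and you handle it slightly better: the covariance is a two-point functional, so it is only determined once the joint law of the field at the two points is fixed; the one-dimensional equalities of Theorem \ref{diffrelation} and Remark \ref{rem1} do not by themselves pin this down, and both proofs implicitly adopt the time-changed PRF $\mathcal{N}(L_{\nu_1}(\cdot),L_{\nu_2}(\cdot))$ as that joint structure --- your phrase about working at the process level with monotone clocks makes this choice explicit, whereas the paper leaves it silent.
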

\begin{proof}
	In view of Remark \ref{rem1} and on using Eq. (26) of Leonenko  and Merzbach (2015), we get
	\begin{align*}
		\mathbb{C}\mathrm{ov}(&\mathscr{N}_{\nu_1,\nu_2}(\tau_1,\tau_2),\,\mathscr{N}_{\nu_1,\nu_2}(t_1,t_2))\\
		&=\frac{\lambda \tau_1^{\nu_1}\tau_2^{\nu_2}}{\Gamma(\nu_1+1)\Gamma(\nu_2+1)}-\frac{\lambda^2(\tau_1 t_1)^{\nu_1}(\tau_2 t_2)^{\nu_2}}{\Gamma^2(\nu_1+1)\Gamma^2(\nu_2+1)}+\frac{\lambda^2}{\nu_1\nu_2\Gamma^2(\nu_1)\Gamma^2(\nu_2)}\\
		&\ \ \cdot\int_{0}^{\tau_1}\int_{0}^{\tau_2}\left((\tau_1-x_1)^{\nu_1}+(t_1-x_1)^{\nu_1}\right)\left((\tau_2-x_2)^{\nu_2}+(t_2-x_2)^{\nu_2}\right)x_1^{\nu_1-1}x_2^{\nu_2-1}\,\mathrm{d}x_1\,\mathrm{d}x_2.
	\end{align*} 
	For $i=1,2$, we have
	\begin{align*}
		\int_{0}^{\tau_i}\big((\tau_i-x_i)^{\nu_i}+(t_i-x_i)^{\nu_i}\big)x_i^{\nu_i-1}\,\mathrm{d}x_i&=\tau_i^{2\nu_i}\int_{0}^{1}(1-u)^{\nu_i}u^{\nu_i-1}\,\mathrm{d}u+t_i^{2\nu_i}\int_{0}^{\tau_i/t_i}(1-u)^{\nu_i}u^{\nu_i-1}\,\mathrm{d}u\\
		&=\tau_i^{2\nu_i}B(\nu_i,\nu_i+1)+t_i^{2\nu_i}B(\tau_i/t_i;\nu_i,\nu_i+1)\\
		&={B(\nu_i,\nu_i+1)}({\tau_i^{2\nu_i}+t_i^{2\nu_i}I_{\tau_i/t_i}(\nu_i,\nu_i+1)}).
	\end{align*}
	So,
	\begin{align*}
		\mathbb{C}\mathrm{ov}(\mathscr{N}_{\nu_1,\nu_2}(\tau_1,\tau_2),\,\mathscr{N}_{\nu_1,\nu_2}(t_1,t_2))&=\frac{\lambda \tau_1^{\nu_1}\tau_2^{\nu_2}}{\Gamma(\nu_1+1)\Gamma(\nu_2+1)}-\frac{\lambda^2(\tau_1 t_1)^{\nu_1}(\tau_2 t_2)^{\nu_2}}{\Gamma^2(\nu_1+1)\Gamma^2(\nu_2+1)}\\
		&\ \ +\frac{\lambda^2}{\nu_1\nu_2\Gamma^2(\nu_1)\Gamma^2(\nu_2)} \prod_{i=1}^{2}B(\nu_i,\nu_i+1)({\tau_i^{2\nu_i}+t_i^{2\nu_i}I_{\tau_i/t_i}(\nu_i,\nu_i+1)}).
	\end{align*}
	On using (\ref{beta}), we get the required result.
\end{proof}
\begin{remark}
		On using (\ref{incbetalim}), we have the following limiting approximation of $I_{\tau_i/t_i}(\alpha,\beta)=B(\tau_i/t_i;\alpha,\beta)/B(\alpha,\beta)$:
		\begin{equation*}
		I_{\tau_i/t_i}(\nu_i,\nu_i+1)=\frac{1}{B(\nu_i,\nu_i+1)}\left(\frac{\tau_i^{\nu_i}}{\nu_it_i^{\nu_i}}-\frac{\nu_i\tau_i^{\nu_i+1}}{(\nu_i+1)t^{\nu_i+1}}\right)+O((\tau_i/t_i)^{\nu_i+2})\ \ \text{as}\ \ t_i\to\infty.
		\end{equation*}		
		Thus,
		\begin{equation*}
			t_i^{2\nu_i}I_{\tau_i/t_i}(\nu_i,\nu_i+1)\sim\frac{(\tau_it_i)^{\nu_i}}{\nu_iB(\nu_i,\nu_i+1)}\ \ \text{as}\ \ t_i\to\infty.
		\end{equation*}
		As $t_i\to\infty$, $i=1,2$, from (\ref{cov1}), we get the following asymptotic behaviour of the covariance of FPRF:
		{\footnotesize\begin{align*}
			\mathbb{C}\mathrm{ov}(\mathscr{N}_{\nu_1,\nu_2}(\tau_1,\tau_2),\,\mathscr{N}_{\nu_1,\nu_2}(t_1,t_2))&\sim\sum_{i=1}^{2}\frac{\lambda^i \tau_1^{\nu_1}\tau_2^{\nu_2}(-t_1^{\nu_1}t_2^{\nu_2})^{i-1}}{\Gamma^i(\nu_1+1)\Gamma^i(\nu_2+1)} +\lambda^2\prod_{i=1}^{2}\frac{1}{\Gamma(2\nu_i+1)}\left(\tau_i^{2\nu_i}+\frac{(\tau_it_i)^{\nu_i}}{\nu_iB(\nu_i,\nu_i+1)}\right)\\
			&=\sum_{i=1}^{2}\frac{\lambda^i \tau_1^{\nu_1}\tau_2^{\nu_2}(-t_1^{\nu_1}t_2^{\nu_2})^{i-1}}{\Gamma^i(\nu_1+1)\Gamma^i(\nu_2+1)} +\lambda^2\prod_{i=1}^{2}\left(\frac{\tau_i^{2\nu_i}}{\Gamma(2\nu_i+1)}+\frac{(\tau_it_i)^{\nu_i}}{\Gamma^2(\nu_i+1)}\right)\\
			&=\sum_{i=1}^{2}\frac{\lambda^i(\tau_1^{\nu_1}\tau_2^{\nu_2})^i}{\Gamma(i\nu_1+1)\Gamma(i\nu_2+1)}+\frac{\lambda^2\tau_1^{2\nu_1}(\tau_2t_2)^{\nu_2}}{\Gamma(2\nu_1+1)\Gamma^2(\nu_2+1)}+\frac{\lambda^2\tau_2^{2\nu_2}(\tau_1t_1)^{\nu_1}}{\Gamma(2\nu_2+1)\Gamma^2(\nu_1+1)}.
		\end{align*}}
\end{remark}
\subsection{Integrals of  Poisson random fields}
Orsingher and Polito (2013) studied the fractional integrals for the homogeneous Poisson process and its time changed variants. Here, we study these integrals for the PRF and FPRF over rectangle $[0,t_1]\times[0,t_2]$.

For $\alpha_1>0,\,\alpha_2>0$, we consider the fractional integral of FPRF as follows:
\begin{equation}\label{frintfprf}
	\mathscr{N}^{\alpha_1,\alpha_2}_{\nu_1,\nu_2}(t_1,t_2)=\frac{1}{\Gamma(\alpha_1)\Gamma(\alpha_2)}\int_{0}^{t_2}\int_{0}^{t_1}(t_2-\tau_2)^{\alpha_2-1}(t_1-\tau_1)^{\alpha_1-1}\mathscr{N}_{\nu_1,\nu_2}(\tau_1,\tau_2)\,\mathrm{d}\tau_2\,\mathrm{d}\tau_1.
\end{equation}
Its mean is given by
\begin{align*}
	\mathbb{E}\mathscr{N}^{\alpha_1,\alpha_2}_{\nu_1,\nu_2}(t_1,t_2)
	&=\frac{1}{\Gamma(\alpha_1)\Gamma(\alpha_2)}\int_{0}^{t_2}\int_{0}^{t_1}(t_2-\tau_2)^{\alpha_2-1}(t_1-\tau_1)^{\alpha_1-1}\mathbb{E}\mathscr{N}_{\nu_1,\nu_2}(\tau_1,\tau_2)\,\mathrm{d}\tau_2\,\mathrm{d}\tau_1\nonumber\\
	&=\frac{1}{\Gamma(\alpha_1)\Gamma(\alpha_2)}\frac{\lambda }{\Gamma(\nu_1+1)\Gamma(\nu_2+1)}\int_{0}^{t_2}\int_{0}^{t_1}(t_2-\tau_2)^{\alpha_2-1}(t_1-\tau_1)^{\alpha_1-1}\tau_1^{\nu_1}\tau_2^{\nu_2}\,\mathrm{d}\tau_2\,\mathrm{d}\tau_1\nonumber\\
	&=\frac{\lambda t_1^{\alpha_1+\nu_1}t_2^{\alpha_2+\nu_2}}{\Gamma(\alpha_1+\nu_1+1)\Gamma(\alpha_2+\nu_2+1)},
\end{align*}
where the penultimate step follows from (\ref{fprfmean}).

On taking $\alpha_1=\alpha_2=1$, (\ref{frintfprf}) reduces to the integral of FPRF whose mean is $\mathbb{E}\mathscr{N}_{\nu_1,\nu_2}^{1,1}(t_1,t_2)=\lambda t_1^{\nu_1+1}t_2^{\nu_2+1}/\Gamma(\nu_1+2)\Gamma(\nu_2+2)$. Also, for $\nu_1=\nu_2=1$, it reduces to the fractional integral of PRF, that is,
\begin{equation}\label{prfint}
	\mathcal{N}^{\alpha_1,\alpha_2}(t_1,t_2)=
	\frac{1}{\Gamma(\alpha_1)\Gamma(\alpha_2)}\int_{0}^{t_2}\int_{0}^{t_1}(t_2-\tau_2)^{\alpha_2-1}(t_1-\tau_1)^{\alpha_1-1}\mathcal{N}(\tau_1,\tau_2)\,\mathrm{d}\tau_2\,\mathrm{d}\tau_1,
\end{equation}
whose mean is 
\begin{equation}\label{meanint}
	\mathbb{E}\mathcal{N}^{\alpha_1,\alpha_2}(t_1,t_2)=\frac{\lambda t_1^{\alpha_1+1}t_2^{\alpha_2+1}}{\Gamma(\alpha_1+2)\Gamma(\alpha_2+2)}.
\end{equation}
Further, for $\alpha_1=\alpha_2=1$, (\ref{prfint}) reduces to the integral of PRF whose mean is $\mathbb{E}\mathcal{N}^{1,1}(t_1,t_2)=\lambda t_1^2t_2^2/4$.
\begin{theorem}
	The variance of fractional integral in (\ref{prfint}) is given by
	\begin{equation}\label{varinprf}
		\mathbb{V}\mathrm{ar}\mathcal{N}^{\alpha_1,\alpha_2}(t_1,t_2)=\lambda\prod_{i=1}^{2}\frac{ t_i^{2\alpha_i+1}}{(2\alpha_i+1)\Gamma^2(\alpha_i+1)}.
	\end{equation}
\end{theorem}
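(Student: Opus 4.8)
The plan is to express the variance as a fourfold integral of the covariance kernel of the underlying PRF, exploit the product structure of that kernel, and reduce each factor to an elementary one-dimensional integral.

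First I would apply the bilinearity of covariance together with Fubini's theorem (legitimate since the integrands are nonnegative) to the definition (\ref{prfint}), obtaining
\begin{align*}
\mathbb{V}\mathrm{ar}\mathcal{N}^{\alpha_1,\alpha_2}(t_1,t_2)=\frac{1}{\Gamma^2(\alpha_1)\Gamma^2(\alpha_2)}\int_0^{t_1}\int_0^{t_1}\int_0^{t_2}\int_0^{t_2}&\prod_{i=1}^2(t_i-\tau_i)^{\alpha_i-1}(t_i-\sigma_i)^{\alpha_i-1}\\
&\times\mathbb{C}\mathrm{ov}(\mathcal{N}(\tau_1,\tau_2),\mathcal{N}(\sigma_1,\sigma_2))\,\mathrm{d}\sigma_2\,\mathrm{d}\tau_2\,\mathrm{d}\sigma_1\,\mathrm{d}\tau_1.
\end{align*}

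Second, I would compute the covariance kernel. Writing $A=[0,\tau_1]\times[0,\tau_2]$ and $B=[0,\sigma_1]\times[0,\sigma_2]$, the decompositions $\mathcal{N}(A)=\mathcal{N}(A\cap B)+\mathcal{N}(A\setminus B)$ and $\mathcal{N}(B)=\mathcal{N}(A\cap B)+\mathcal{N}(B\setminus A)$, combined with property $(ii)$ (independence of counts on disjoint sets), reduce the covariance to $\mathbb{V}\mathrm{ar}\,\mathcal{N}(A\cap B)=\lambda|A\cap B|$, since for a Poisson variable the variance equals the mean by (\ref{dprfdist}). As $A\cap B=[0,\min(\tau_1,\sigma_1)]\times[0,\min(\tau_2,\sigma_2)]$, this gives
\begin{equation*}
\mathbb{C}\mathrm{ov}(\mathcal{N}(\tau_1,\tau_2),\mathcal{N}(\sigma_1,\sigma_2))=\lambda\min(\tau_1,\sigma_1)\min(\tau_2,\sigma_2).
\end{equation*}
Crucially, this kernel factorizes across the two coordinates, so the fourfold integral splits as
\begin{equation*}
\mathbb{V}\mathrm{ar}\mathcal{N}^{\alpha_1,\alpha_2}(t_1,t_2)=\lambda\prod_{i=1}^2\frac{1}{\Gamma^2(\alpha_i)}\int_0^{t_i}\int_0^{t_i}(t_i-\tau_i)^{\alpha_i-1}(t_i-\sigma_i)^{\alpha_i-1}\min(\tau_i,\sigma_i)\,\mathrm{d}\tau_i\,\mathrm{d}\sigma_i.
\end{equation*}

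Third, I would evaluate each factor. Rather than splitting into the regions $\{\tau_i<\sigma_i\}$ and $\{\tau_i>\sigma_i\}$, which forces incomplete beta functions, I would use the representation $\min(\tau_i,\sigma_i)=\int_0^{t_i}\mathbf{1}(u\le\tau_i)\mathbf{1}(u\le\sigma_i)\,\mathrm{d}u$, valid for $\tau_i,\sigma_i\in[0,t_i]$. Interchanging the order of integration decouples $\tau_i$ from $\sigma_i$ and yields
\begin{equation*}
\int_0^{t_i}\left(\int_u^{t_i}(t_i-\tau_i)^{\alpha_i-1}\,\mathrm{d}\tau_i\right)^2\mathrm{d}u=\int_0^{t_i}\left(\frac{(t_i-u)^{\alpha_i}}{\alpha_i}\right)^2\mathrm{d}u=\frac{t_i^{2\alpha_i+1}}{\alpha_i^2(2\alpha_i+1)}.
\end{equation*}
Substituting this and using $\Gamma^2(\alpha_i+1)=\alpha_i^2\Gamma^2(\alpha_i)$ gives (\ref{varinprf}). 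The computations are elementary; the only step requiring care is the covariance identity $\mathbb{C}\mathrm{ov}(\mathcal{N}(A),\mathcal{N}(B))=\lambda|A\cap B|$, which I expect to be the main point to establish cleanly, as it rests on decomposing into disjoint pieces and invoking independence. The indicator representation of the minimum is the device that bypasses the more cumbersome beta-function bookkeeping.
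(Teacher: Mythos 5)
Your proof is correct, and it takes a genuinely different route from the paper's at both key steps. The paper does not derive the covariance kernel itself: it quotes the product-moment formula $\mathbb{E}\mathcal{N}(t_1,t_2)\mathcal{N}(\tau_1,\tau_2)=\lambda\min\{t_1,\tau_1\}\min\{t_2,\tau_2\}+\lambda^2 t_1t_2\tau_1\tau_2$ from Leonenko and Merzbach (2015), computes the full second moment of $\mathcal{N}^{\alpha_1,\alpha_2}(t_1,t_2)$ (carrying the $\lambda^2$ product term through the fourfold integral), and then subtracts the squared mean (\ref{meanint}); you instead work directly with the bilinearity of covariance, so the mean terms never enter, and you establish $\mathbb{C}\mathrm{ov}(\mathcal{N}(A),\mathcal{N}(B))=\lambda|A\cap B|$ from first principles via the disjoint decomposition $A=(A\cap B)\cup(A\setminus B)$, $B=(A\cap B)\cup(B\setminus A)$ and property $(ii)$, which makes your argument self-contained. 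The second difference is in evaluating the one-dimensional factor: the paper splits the inner integral at $\tau_i$ into the regions $\{\tau_i'<\tau_i\}$ and $\{\tau_i'>\tau_i\}$, obtaining $\frac{1}{\Gamma(\alpha_i)\Gamma(\alpha_i+2)}\int_0^{t_i}(t_i-\tau_i)^{\alpha_i-1}\left(t_i^{\alpha_i+1}-(t_i-\tau_i)^{\alpha_i+1}\right)\mathrm{d}\tau_i$, whereas your identity $\min(\tau_i,\sigma_i)=\int_0^{t_i}\mathbf{1}(u\le\tau_i)\mathbf{1}(u\le\sigma_i)\,\mathrm{d}u$ decouples the variables and collapses the double integral to the perfect square $\int_0^{t_i}\bigl((t_i-u)^{\alpha_i}/\alpha_i\bigr)^2\mathrm{d}u$, which is immediate. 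Both routes exploit the same factorization across coordinates and agree on the final answer; the paper's buys brevity by citing a known formula, while yours buys self-containedness and avoids the cancellation bookkeeping. One small point to tighten: the covariance--integral interchange is cleanest justified by applying Tonelli separately to $\mathbb{E}\bigl[(\mathcal{N}^{\alpha_1,\alpha_2})^2\bigr]$ and to the product of means (both integrands nonnegative, both results finite), then subtracting, rather than invoking Fubini on the covariance in one stroke.
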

\begin{proof}
	For $(t_1,t_2),\,(\tau_1,\tau_2)\in\mathbb{R}^2_+$, we have the following result (see Leonenko and Merzbach (2015)):
	\begin{align*}
		\mathbb{E}\mathcal{N}(t_1,t_2)\mathcal{N}(\tau_1,\tau_2)&=\mathbb{V}\mathrm{ar}\mathcal{N}(1,1)\min\{t_1,\tau_1\}\min\{t_2,\tau_2\}+{(\mathbb{E}\mathcal{N}(1,1))^2}t_1t_2\tau_1\tau_2\\
		&=\lambda\min\{t_1,\tau_1\}\min\{t_2,\tau_2\}+\lambda^2t_1t_2\tau_1\tau_2.
	\end{align*}	
	The second moment of (\ref{prfint}) is given by
	\begin{align*}
		\mathbb{E}(\mathcal{N}^{\alpha_1,\alpha_2}(t_1,t_2))^2&=\frac{1}{(\Gamma(\alpha_1)\Gamma(\alpha_2))^2}\int_{0}^{t_2}\int_{0}^{t_1}\int_{0}^{t_2}\int_{0}^{t_1}(t_2-\tau_2)^{\alpha_2-1}(t_1-\tau_1)^{\alpha_1-1}\\
		&\ \ \cdot(t_2-\tau'_2)^{\alpha_2-1}(t_1-\tau'_1)^{\alpha_1-1}\mathbb{E}\mathcal{N}(\tau_1,\tau_2)\mathcal{N}(\tau'_1,\tau'_2)\,\mathrm{d}\tau_2\,\mathrm{d}\tau_1\mathrm{d}\tau'_2\,\mathrm{d}\tau'_1\\
		&=\frac{1}{(\Gamma(\alpha_1)\Gamma(\alpha_2))^2}\int_{0}^{t_2}\int_{0}^{t_1}\int_{0}^{t_2}\int_{0}^{t_1}(t_2-\tau_2)^{\alpha_2-1}(t_1-\tau_1)^{\alpha_1-1}(t_2-\tau'_2)^{\alpha_2-1}\\
		&\ \ \cdot(t_1-\tau'_1)^{\alpha_1-1}(\lambda\min\{\tau_1,\tau'_1\}\min\{\tau_2,\tau'_2\}+\lambda^2\tau_1\tau_2\tau'_1\tau'_2)\,\mathrm{d}\tau_2\,\mathrm{d}\tau_1\mathrm{d}\tau'_2\,\mathrm{d}\tau'_1\\
		&=\frac{\lambda^2t_1^{2\alpha_1+2}t_2^{2\alpha_2+2}}{\Gamma^2(\alpha_1+2)\Gamma^2(\alpha_2+2)}+\frac{\lambda}{(\Gamma(\alpha_1)\Gamma(\alpha_2))^2}\int_{0}^{t_2}\int_{0}^{t_1}\int_{0}^{t_2}\int_{0}^{t_1}(t_2-\tau_2)^{\alpha_2-1}\\
		&\ \ \cdot(t_1-\tau_1)^{\alpha_1-1}(t_2-\tau'_2)^{\alpha_2-1}(t_1-\tau'_1)^{\alpha_1-1}\min\{\tau_1,\tau'_1\}\min\{\tau_2,\tau'_2\}\,\mathrm{d}\tau_2\,\mathrm{d}\tau_1\mathrm{d}\tau'_2\,\mathrm{d}\tau'_1\\
		&=\lambda^2\prod_{i=1}^{2}\frac{ t_i^{2\alpha_i+2}}{\Gamma^2(\alpha_i+2)}+\lambda\prod_{i=1}^{2}\frac{1}{\Gamma^2(\alpha_i)}\int_{0}^{t_i}(t_i-\tau_i)^{\alpha_i-1}\\
		&\ \ \cdot\left(\int_{0}^{\tau_i}(t_i-\tau'_i)^{\alpha_i-1}\tau'_i\,\mathrm{d}\tau'_i+\int_{\tau_i}^{t_i}(t_i-\tau'_i)^{\alpha_i-1}\tau_i\,\mathrm{d}\tau'_i\right)\,\mathrm{d}\tau_i\\
		&=\lambda^2\prod_{i=1}^{2}\frac{ t_i^{2\alpha_i+2}}{\Gamma^2(\alpha_i+2)}+\lambda\prod_{i=1}^{2}\frac{1}{\Gamma(\alpha_i)\Gamma(\alpha_i+2)}\\
		&\ \ \cdot\int_{0}^{t_i}(t_i-\tau_i)^{\alpha_i-1}(t_i^{\alpha_i+1}-(t_i-\tau_i)^{\alpha_i+1})\,\mathrm{d}\tau_i\\
		&=\lambda^2\prod_{i=1}^{2}\frac{ t_i^{2\alpha_i+2}}{\Gamma^2(\alpha_i+2)}+\lambda\prod_{i=1}^{2}\frac{t_i^{2\alpha_i+1}}{(2\alpha_i+1)\Gamma^2(\alpha_i+1)}.
	\end{align*}
	On using (\ref{meanint}), we get the required result. 
\end{proof}
\begin{remark}
	On taking $\alpha_1=\alpha_2=1$ in (\ref{varinprf}), we get $\mathbb{V}\mathrm{ar}\mathcal{N}^{1,1}(t_1,t_2)=\lambda t_1^3t_2^3/9$.
\end{remark}
The following result will be used (see Baddeley (2007)):
\begin{lemma}
	Let $\{\mathcal{N}(t_1,t_2),\ (t_1,t_2)\in\mathbb{R}^2_+\}$ be the PRF with parameter $\lambda>0$. Then, conditional on the event $\{\mathcal{N}(t_1,t_2)=l\}$, $\mathcal{N}(\tau_1,\tau_2)$ has binomial distribution for $(\tau_1,\tau_2)\prec(t_1,t_2)$. That is,
	\begin{equation}\label{conddist}
		\mathrm{Pr}\{\mathcal{N}(\tau_1,\tau_2)=k|\mathcal{N}(t_1,t_2)=l\}=\binom{l}{k}\left(\frac{\tau_1\tau_2}{t_1t_2}\right)^k\left(1-\frac{\tau_1\tau_2}{t_1t_2}\right)^{l-k},\ k=0,1,2,\dots,l.
	\end{equation}
\end{lemma}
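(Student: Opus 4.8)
The plan is to exploit the fact that although the rectangles $[0,\tau_1]\times[0,\tau_2]$ and $[0,t_1]\times[0,t_2]$ are nested rather than disjoint, the smaller one can be complemented inside the larger one to produce two disjoint regions, at which point the independence and Poisson properties (i)--(ii) of the PRF do all the work. First I would set $A=[0,\tau_1]\times[0,\tau_2]$ and $B=\bigl([0,t_1]\times[0,t_2]\bigr)\setminus A$, so that $A$ and $B$ are disjoint with $|A|=\tau_1\tau_2$ and $|B|=t_1t_2-\tau_1\tau_2$ (using $(\tau_1,\tau_2)\prec(t_1,t_2)$ to guarantee $A\subseteq[0,t_1]\times[0,t_2]$ and hence $|B|\ge0$). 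By property (ii), $\mathcal{N}(A)$ and $\mathcal{N}(B)$ are independent, and by (i)--(\ref{dprfdist}) each is Poisson with mean $\lambda|A|$ and $\lambda|B|$, respectively.

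Next I would rewrite the joint event in terms of these disjoint counts. Since $\mathcal{N}(\tau_1,\tau_2)=\mathcal{N}(A)$ and $\mathcal{N}(t_1,t_2)=\mathcal{N}(A)+\mathcal{N}(B)$, the event $\{\mathcal{N}(\tau_1,\tau_2)=k,\ \mathcal{N}(t_1,t_2)=l\}$ coincides with $\{\mathcal{N}(A)=k,\ \mathcal{N}(B)=l-k\}$ for $0\le k\le l$. Independence then factorizes the joint probability into the product of the two Poisson pmfs, namely
\begin{equation*}
	\frac{e^{-\lambda\tau_1\tau_2}(\lambda\tau_1\tau_2)^{k}}{k!}\cdot\frac{e^{-\lambda(t_1t_2-\tau_1\tau_2)}\bigl(\lambda(t_1t_2-\tau_1\tau_2)\bigr)^{l-k}}{(l-k)!}.
\end{equation*}

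Finally I would divide this joint probability by the marginal $\mathrm{Pr}\{\mathcal{N}(t_1,t_2)=l\}=e^{-\lambda t_1t_2}(\lambda t_1t_2)^{l}/l!$ obtained from (\ref{prfdist}). The exponential factors cancel since $\tau_1\tau_2+(t_1t_2-\tau_1\tau_2)=t_1t_2$, the powers of $\lambda$ cancel, and the factorials reassemble into $\binom{l}{k}$, leaving exactly
\begin{equation*}
	\binom{l}{k}\left(\frac{\tau_1\tau_2}{t_1t_2}\right)^{k}\left(1-\frac{\tau_1\tau_2}{t_1t_2}\right)^{l-k},
\end{equation*}
as claimed. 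This is a routine calculation once the decomposition is in place. The only conceptual point — and the step most likely to trip up a careless write-up — is the decomposition step itself: one must resist treating the two nested rectangles directly (they are \emph{not} independent) and instead introduce the disjoint complement $B$, so that the independence axiom (ii) applies to $\mathcal{N}(A)$ and $\mathcal{N}(B)$ rather than to the overlapping counts. Everything after that is algebraic simplification.
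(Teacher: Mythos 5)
Your proof is correct. Note that the paper does not actually prove this lemma: it is stated as a known result with a citation to Baddeley (2007), so there is no in-paper argument to compare against. Your route — splitting the big rectangle into $A=[0,\tau_1]\times[0,\tau_2]$ and the L-shaped complement $B$, then invoking additivity of the random measure ($\mathcal{N}(t_1,t_2)=\mathcal{N}(A)+\mathcal{N}(B)$), independence on disjoint sets from property $(ii)$, and the Poisson marginals from property $(i)$ — is precisely the standard argument behind the cited result, and your algebra (cancellation of exponentials and powers of $\lambda$, reassembly of factorials into $\binom{l}{k}$) goes through exactly as claimed. The one step worth stating explicitly in a final write-up is the additivity of $\mathcal{N}$ on disjoint Borel sets, which you use implicitly when writing $\mathcal{N}(t_1,t_2)=\mathcal{N}(A)+\mathcal{N}(B)$; it holds because $\mathcal{N}$ is an integer-valued random measure, so no gap remains.
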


\begin{remark}
	On using (\ref{conddist}), we get $\mathbb{E}\{\mathcal{N}(\tau_1,\tau_2)|\mathcal{N}(t_1,t_2)=l\}=l\tau_1\tau_2/t_1t_2$. Thus, the conditional mean of (\ref{prfint}) is given by
	\begin{equation*}
		\mathbb{E}\{\mathcal{N}^{\alpha_1,\alpha_2}(t_1,t_2)|\mathcal{N}(t_1,t_2)=l\}=l\prod_{i=1}^{2}\frac{t_i^{-1}}{\Gamma(\alpha_i)}\int_{0}^{t_i}(t_i-\tau_i)^{\alpha_i-1}{\tau_i}\,\mathrm{d}\tau_i=\frac{lt_1^{\alpha_1}t_2^{\alpha_2}}{\Gamma(\alpha_1+2)\Gamma(\alpha_2+2)}.
	\end{equation*}
\end{remark} 
\section{Statistical properties of FPRF}\label{sec4}
Here, we study some distributional properties of the FPRF. First, we obtain an explicit expression of the distribution of FPRF using the Adomian decomposition method.

Note that the Riemann-Liouville integral operator $I_t^\nu$, $\nu>0$ as defined in (\ref{RL}) is linear. So, the Adomian polynomials associated with it are given by $I_t^\nu(u_k(t))=A_k(u_0(t),u_1(t),\dots,u_k(t))$.	
On applying the Riemann-Liouville integral operator $I_{t_1}^{\nu_1}$, $0<\nu_1\leq1$ on both sides of (\ref{pmfequ2}), we get
		\begin{align}\label{pmfequ3}
		\frac{\partial^{\nu_2}}{\partial t_2^{\nu_2}}q_{\nu_1,\nu_2}(k,t_1,t_2)&=\lambda(k+1)I_{t_1}^{\nu_1}q_{\nu_1,\nu_2}(k+1,t_1,t_2)-\lambda(2k+1)I_{t_1}^{\nu_1}q_{\nu_1,\nu_2}(k,t_1,t_2)\nonumber\\
		&\ \  +\lambda kI_{t_1}^{\nu_1}q_{\nu_1,\nu_2}(k-1,t_1,t_2),\ k\ge0,
	\end{align}
where we have used $(\partial^{\nu_2}/\partial t_2^{\nu_2})q_{\nu_1,\nu_2}(k,0,t_2)=0$ for all $k\ge0$. Further, on applying $I_{t_2}^{\nu_2}$, $0<\nu_2\leq1$ on both sides of (\ref{pmfequ3}) and substituting $q_{\nu_1,\nu_2}(k,t_1,t_2)=\sum_{n=0}^{\infty}q_{\nu_1,\nu_2}^n(k,t_1,t_2)$, $k\ge0$, we get
	
\begin{align*}\label{pmfequ4}
	\sum_{n=0}^{\infty}q_{\nu_1,\nu_2}^n(k,t_1,t_2)&=q_{\nu_1,\nu_2}(k,t_1,0)+\sum_{n=0}^{\infty}I_{t_2}^{\nu_2}I_{t_1}^{\nu_1}\big(\lambda(k+1)q_{\nu_1,\nu_2}^n(k+1,t_1,t_2)\nonumber\\
	&\ \ -\lambda(2k+1)q_{\nu_1,\nu_2}^n(k,t_1,t_2) +\lambda kq_{\nu_1,\nu_2}^n(k-1,t_1,t_2)\big),\ k\ge0.
\end{align*}	
On applying the Adomian decomposition method, we obtain
	\begin{equation}\label{adm1}
		q^0_{\nu_1,\nu_2}(k,t_1,t_2)=	q_{\nu_1,\nu_2}(k,t_1,0)=\begin{cases}
			1,\ k=0,\\
			0,\ k\ne0 
		\end{cases}
	\end{equation}
and
\begin{equation}\label{adm2}
	q^n_{\nu_1,\nu_2}(k,t_1,t_2)=\begin{cases}
		I_{t_2}^{\nu_2}I_{t_1}^{\nu_1}\big(\lambda q_{\nu_1,\nu_2}^{n-1}(1,t_1,t_2)-\lambda q_{\nu_1,\nu_2}^{n-1}(0,t_1,t_2)\big),\ k=0,\,n\ge1,\vspace{0.15cm}\\
		I_{t_2}^{\nu_2}I_{t_1}^{\nu_1}\big(\lambda(k+1)q_{\nu_1,\nu_2}^{n-1}(k+1,t_1,t_2) -\lambda(2k+1)q_{\nu_1,\nu_2}^{n-1}(k,t_1,t_2)\\
		\hspace{4.5cm}+\lambda kq_{\nu_1,\nu_2}^{n-1}(k-1,t_1,t_2)\big),\ k\ge1,\,n\ge1.
	\end{cases}
\end{equation}

Next result provides a sufficient condition for the series components $q_{\nu_1,\nu_2}^n(k,t_1,t_2)$ to vanish.
\begin{proposition}\label{prop1}    
	For $t\ge0$, the series components as defined by (\ref{adm2}) vanish for all $k\ge n+1$, that is, $q_{\nu_1,\nu_2}^n(k,t_1,t_2)=0$ provided $k-n\ge1$.
\end{proposition}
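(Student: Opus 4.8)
The plan is to argue by induction on the decomposition level $n$, exploiting the structure of the recursion (\ref{adm2}): at each level, $q^n_{\nu_1,\nu_2}(k,\cdot)$ is built from the previous-level components at the three neighbouring indices $k-1$, $k$, and $k+1$ only, and the Riemann--Liouville operators $I^{\nu_1}_{t_1}$, $I^{\nu_2}_{t_2}$ are linear, hence send the zero function to the zero function. So the strategy is simply to show that the ``triangular'' cut-off $q^{n}_{\nu_1,\nu_2}(k,\cdot)=0$ for $k\ge n+1$ propagates from level $n-1$ to level $n$.

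For the base case $n=0$, the claim reads $q^0_{\nu_1,\nu_2}(k,t_1,t_2)=0$ for all $k\ge 1$, which is exactly the content of (\ref{adm1}). For the inductive step I would assume $q^{n-1}_{\nu_1,\nu_2}(j,t_1,t_2)=0$ whenever $j\ge n$ (i.e. $j-(n-1)\ge 1$), then fix $k\ge n+1$. Since $n\ge 1$ forces $k\ge 2$, the second branch of (\ref{adm2}) applies and
\begin{equation*}
q^n_{\nu_1,\nu_2}(k,t_1,t_2)=I_{t_2}^{\nu_2}I_{t_1}^{\nu_1}\bigl(\lambda(k+1)q^{n-1}_{\nu_1,\nu_2}(k+1,t_1,t_2)-\lambda(2k+1)q^{n-1}_{\nu_1,\nu_2}(k,t_1,t_2)+\lambda k\, q^{n-1}_{\nu_1,\nu_2}(k-1,t_1,t_2)\bigr).
\end{equation*}
The previous-level components appearing carry indices $k+1$, $k$, and $k-1$, the smallest being $k-1$; because $k\ge n+1$ we have $k-1\ge n$, so all three indices are $\ge n$. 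By the induction hypothesis each of these components vanishes identically, the integrand is the zero function, and linearity of $I_{t_1}^{\nu_1}$ and $I_{t_2}^{\nu_2}$ then yields $q^n_{\nu_1,\nu_2}(k,t_1,t_2)=0$, closing the induction.

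I expect no genuine obstacle here; the sole point requiring care is the index bookkeeping. The decisive observation is that the lowest index fed into level $n$ from level $n-1$ is $k-1$, and the condition $k-1\ge n$ needed to annihilate that term under the hypothesis is exactly equivalent to the claimed threshold $k\ge n+1$ (equivalently $k-n\ge 1$). It is precisely this matching of the shift in the recursion with the stated bound that makes the cut-off advance cleanly by one unit at each level, so the induction goes through without any nontrivial estimate.
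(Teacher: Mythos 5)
Your proof is correct and follows essentially the same route as the paper's: induction on the level $n$, using that the recursion (\ref{adm2}) only feeds indices $k-1$, $k$, $k+1$ from level $n-1$ into level $n$, together with linearity of $I_{t_1}^{\nu_1}$ and $I_{t_2}^{\nu_2}$. If anything, your version is tidier, since the paper's separate explicit checks of the cases $k=n=1$ and $n=1$, $k\ge1$ are already subsumed by your general inductive step.
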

\begin{proof}
	It is sufficient to show that
	\begin{equation}\label{prop}
		q_{\nu_1,\nu_2}^n(k+n,t_1,t_2)=0,
	\end{equation}
	for all $k\ge1$ and $n\ge0$. From (\ref{adm1}), we note that (\ref{prop}) holds for $n=0$ and $k\geq1$. 	
	
	For $k=n=1$, from (\ref{adm2}), we get
	\begin{equation*}
		\begin{split}            p_{\nu_1,\nu_2}^1(2,t_1,t_2)&=I_{t_2}^{\nu_2}I_{t_1}^{\nu_1}(3\lambda q_{\nu_1,\nu_2}^0(3,t_1,t_2)-5\lambda q_{\nu_1,\nu_2}^0(2,t_1,t_2)+2\lambda q_{\nu_1,\nu_2}^0(1,t_1,t_2))=0,      
		\end{split}
	\end{equation*}
	where we have used (\ref{adm1}). So, \((\ref{prop})\) holds for \(k=n=1\).
	
	 If $n=1$ and $k\ge1$ then 
	\begin{align*}
		p_{\nu_1,\nu_2}^1(k+1,t_1,t_2)&=I_{t_2}^{\nu_2}I_{t_1}^{\nu_1}\big(\lambda(k+2)q_{\nu_1,\nu_2}^{0}(k+2,t_1,t_2) -\lambda(2k+3)q_{\nu_1,\nu_2}^{0}(k+1,t_1,t_2)\\
		&\ \ +\lambda (k+1)q_{\nu_1,\nu_2}^{0}(k,t_1,t_2)\big)=I_{t_2}^{\nu_2}I_{t_1}^{\nu_1}(0)=0.
	\end{align*}
	Thus, (\ref{prop}) holds for  $n=1$ and $k\ge1$. 
	
	Now, suppose  the result holds for some $n=m$, $m\ge2$ and for all $k\ge1$, that is, 
	$
		q_{\nu_1,\nu_2}^m(m+k,t)=0.
$
	Then, from (\ref{adm2}), we have
	\begin{align*}
			q_{\nu_1,\nu_2}^{m+1}(m+k+1,t)&=I_{t_2}^{\nu_2}I_{t_2}^{\nu_2}(-\lambda(m+k+2)q_{\nu_1,\nu_2}^m(m+k+2,t_1,t_2)\\
			&\ \ -\lambda(2(m+k)+3)q_{\nu_1,\nu_2}^m(m+k+1,t_1,t_2)\\&\ \ +\lambda(m+k+1)q_{\nu_1,\nu_2}^m(m+k,t_1,t_2))=I_{t_2}^{\nu_2}I_{t_1}^{\nu_1}(0)=0,         
	\end{align*}
	where we have used the induction hypothesis to get the penultimate step. Thus, the proof completes using the method of induction.
\end{proof}
Now, we explicitly obtain some series components of the pmf of FPRF. From (\ref{adm1}), we have $p_{\nu_1,\nu_2}^0(0,t_1,t_2)=1$. On taking $k=0$ in (\ref{adm2}) and using Proposition \ref{prop1}, we get
\begin{equation}\label{0.0}
	q_{\nu_1,\nu_2}^1(0,t_1,t_2)=I_{t_2}^{\nu_2}I_{t_1}^{\nu_1}(-\lambda t_1^0t_2^0)=\frac{-\lambda t_1^{\nu_1}t_2^{\nu_2}}{\Gamma(\nu_1+1)\Gamma(\nu_2+1)}.
\end{equation}
On taking $k=1$ in (\ref{adm2}), we get
\begin{align}
	q_{\nu_1,\nu_2}^1(1,t_1,t_2)&=I_{t_2}^{\nu_2}I_{t_1}^{\nu_1}(2\lambda q_{\nu_1,\nu_2}^0(2,t_1,t_2)-3\lambda q_{\nu_1,\nu_2}^0(1,t_1,t_2)+\lambda q_{\nu_1,\nu_2}^0(0,t_1,t_2))\nonumber\\
	&= I_{t_2}^{\nu_2}I_{t_1}^{\nu_1}(\lambda t_1^0t_2^0)=\frac{\lambda t_1^{\nu_1}t_2^{\nu_2}}{\Gamma(\nu_1+1)\Gamma(\nu_2+1)},\label{1.0}
\end{align}
where the penultimate step follows from (\ref{adm1}). So,
\begin{align*}
	q_{\nu_1,\nu_2}^2(0,t_1,t_2)&=I_{t_2}^{\nu_2}I_{t_1}^{\nu_1}(\lambda q_{\nu_1,\nu_2}^{1}(1,t_1,t_2)-\lambda q_{\nu_1,\nu_2}^{1}(0,t_1,t_2))\\
	&=\frac{2\lambda^2 I_{t_2}^{\nu_2}I_{t_1}^{\nu_1}(t_1^{\nu_1}t_2^{\nu_2})}{\Gamma(\nu_1+1)\Gamma(\nu_2+1)}=\frac{2(\lambda t_1^{\nu_1}t_2^{\nu_2})^2}{\Gamma(2\nu_1+1)\Gamma(2\nu_2+1)}.
\end{align*}
From Proposition \ref{prop1}, we have $q_{\nu_1,\nu_2}^1(2,t_1,t_2)=0$. Thus,
\begin{align*}
	q_{\nu_1,\nu_2}^2(1,t_1,t_2)&=	I_{t_2}^{\nu_2}I_{t_1}^{\nu_1}(2\lambda q_{\nu_1,\nu_2}^{1}(2,t_1,t_2) -3\lambda q_{\nu_1,\nu_2}^{1}(1,t_1,t_2) +\lambda q_{\nu_1,\nu_2}^{1}(0,t_1,t_2))\\
	&=-\frac{4\lambda^2I_{t_2}^{\nu_2}I_{t_1}^{\nu_1}(t_1^{\nu_1}t_2^{\nu_2})}{\Gamma(\nu_1+1)\Gamma(\nu_2+1)}=-\frac{4(\lambda t_1^{\nu_1}t_2^{\nu_2})^2}{\Gamma(2\nu_1+1)\Gamma(2\nu_2+1)},
\end{align*}
where we have used (\ref{0.0}) and (\ref{1.0}) to get the penultimate step.

Proceeding in the similar way, we obtain
\begin{align*}
	q_{\nu_1,\nu_2}^3(0,t_1,t_2)&=I_{t_2}^{\nu_2}I_{t_1}^{\nu_1}(\lambda q_{\nu_1,\nu_2}^2(1,t_1,t_2)-\lambda q_{\nu_1,\nu_2}^2(0,t_1,t_2))=\frac{6(-\lambda t_1^{\nu_1}t_2^{\nu_2})^3}{\Gamma(3\nu_1+1)\Gamma(3\nu_2+1)},\\
	q_{\nu_1,\nu_2}^2(2,t_1,t_2)&=I_{t_2}^{\nu_2}I_{t_1}^{\nu_1}(3\lambda q_{\nu_1,\nu_2}^{1}(3,t_1,t_2) -5\lambda q_{\nu_1,\nu_2}^{1}(2,t_1,t_2)+2\lambda q_{\nu_1,\nu_2}^{1}(1,t_1,t_2))\\
	&=\frac{2(\lambda t_1^{\nu_1}t_2^{\nu_2})^2}{\Gamma(2\nu_1+1)\Gamma(2\nu_2+1)},\\
	q_{\nu_1,\nu_2}^3(1,t_1,t_2)&=I_{t_2}^{\nu_2}I_{t_1}^{\nu_1}(2\lambda q_{\nu_1,\nu_2}^{2}(2,t_1,t_2) -3\lambda q_{\nu_1,\nu_2}^{2}(1,t_1,t_2) +\lambda q_{\nu_1,\nu_2}^{2}(0,t_1,t_2))\\
	&=\frac{18(\lambda t_1^{\nu_1}t_2^{\nu_2})^3}{\Gamma(3\nu_1+1)\Gamma(3\nu_2+1)},\\
	q_{\nu_1,\nu_2}^4(0,t_1,t_2)&=I_{t_2}^{\nu_2}I_{t_1}^{\nu_1}(\lambda q_{\nu_1,\nu_2}^3(1,t_1,t_2)-\lambda q_{\nu_1,\nu_2}^3(0,t_1,t_2))=\frac{24(\lambda t_1^{\nu_1}t_2^{\nu_2})^4}{\Gamma(4\nu_1+1)\Gamma(4\nu_2+1)}.
\end{align*}

The observed pattern of the appearance of series components are illustrated in Figure \ref{fig}. Note that to get $q_{\nu_1,\nu_2}^n(0,t_1,t_2)$, $n\ge1$, we require the series components $q_{\nu_1,\nu_2}^{n-1}(0,t_1,t_2)$ and $q_{\nu_1,\nu_2}^{n-1}(1,t_1,t_2)$. For $k\ge1$, to obtain $q_{\nu_1,\nu_2}^n(k,t_1,t_2)$, $n\ge1$, we need the series components $q_{\nu_1,\nu_2}^{n-1}(k-1,t_1,t_2)$, $q_{\nu_1,\nu_2}^{n-1}(k,t_1,t_2)$ and $q_{\nu_1,\nu_2}^{n-1}(k+1,t_1,t_2)$.

\begin{figure}[ht!]
	\includegraphics[width=14cm]{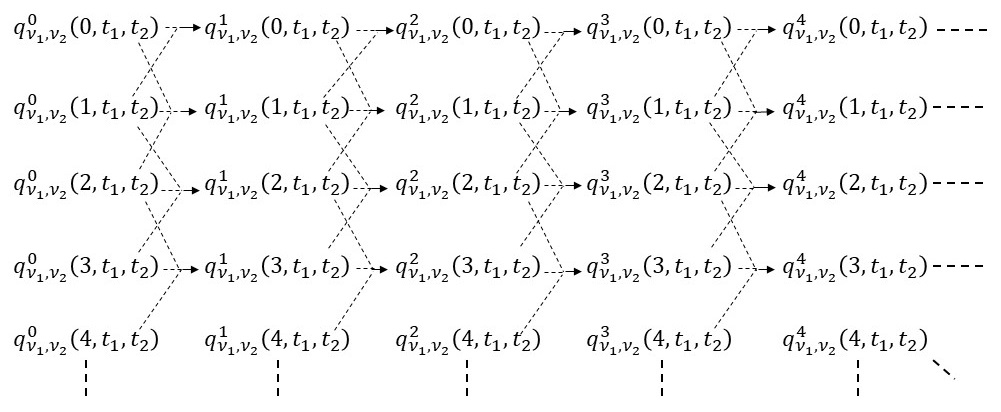}
	\caption{Pattern in the series components}\label{fig}
\end{figure}

In the next result, we obtain a closed form of the series components of the pmf of FPRF.
\begin{theorem}
	The series components of the pmf of FPRF are given by
	\begin{equation}\label{pmfcompts}
		q_{\nu_1,\nu_2}^n(k,t_1,t_2)=\begin{cases}
			\displaystyle\frac{n!(-\lambda t_1^{\nu_1}t_2^{\nu_2})^n}{\Gamma(n\nu_1+1)\Gamma(n\nu_2+1)},\ k=0,\,n\ge0,\vspace{0.15cm}\\
			\displaystyle\frac{(-1)^{n-k}n_{(n-k)}n_{(k)}(\lambda t_1^{\nu_1}t_2^{\nu_2})^{n}}{\Gamma(n\nu_1+1)\Gamma(n\nu_2+1)},\ n\ge k\ge1,\vspace{0.15cm}\\
			0,\ k-n\ge1,
		\end{cases}
	\end{equation}
	where $n_{(k)}=n(n-1)(n-2)\dots(n-k+1)$.
\end{theorem}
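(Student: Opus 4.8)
The plan is to induct on $n$. The first step is to record the right \emph{shape} of the components: I would show, by induction using (\ref{adm1})--(\ref{adm2}), that
\begin{equation*}
q_{\nu_1,\nu_2}^n(k,t_1,t_2)=a_{n,k}\,\frac{(\lambda t_1^{\nu_1}t_2^{\nu_2})^n}{\Gamma(n\nu_1+1)\Gamma(n\nu_2+1)},
\end{equation*}
where the $a_{n,k}$ are constants independent of $t_1,t_2$. The engine is Lemma \ref{lemma1}: applying it once in each variable gives $I_{t_1}^{\nu_1}(t_1^{(n-1)\nu_1})=\big(\Gamma((n-1)\nu_1+1)/\Gamma(n\nu_1+1)\big)t_1^{n\nu_1}$ and similarly in $t_2$, whence
\begin{equation*}
I_{t_2}^{\nu_2}I_{t_1}^{\nu_1}\left(\frac{(\lambda t_1^{\nu_1}t_2^{\nu_2})^{n-1}}{\Gamma((n-1)\nu_1+1)\Gamma((n-1)\nu_2+1)}\right)=\frac{1}{\lambda}\,\frac{(\lambda t_1^{\nu_1}t_2^{\nu_2})^{n}}{\Gamma(n\nu_1+1)\Gamma(n\nu_2+1)}.
\end{equation*}
Substituting the factored form into (\ref{adm2}) and cancelling this common base term, the $t$-dependence drops out and I am left with the purely numerical recurrence
\begin{equation*}
a_{n,k}=(k+1)a_{n-1,k+1}-(2k+1)a_{n-1,k}+k\,a_{n-1,k-1},\qquad k\ge0,\ n\ge1,
\end{equation*}
with $a_{0,0}=1$, $a_{0,k}=0$ for $k\ne0$, and the conventions $a_{m,j}=0$ for $j<0$ or $j>m$ (the latter being exactly Proposition \ref{prop1}). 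The $k=0$ case of (\ref{adm2}) is automatically subsumed, since the missing term there carries the factor $k=0$.

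It then remains to check that the candidate $a_{n,k}=(-1)^{n-k}n_{(n-k)}n_{(k)}$ for $0\le k\le n$ (which at $k=0$ reads $(-1)^n n!$, matching the first line of (\ref{pmfcompts})) solves this recurrence. Writing $n_{(n-k)}=n!/k!$ and $n_{(k)}=n!/(n-k)!$, together with the analogous expressions at level $n-1$, substituting, and clearing the common sign and factorial factors, the verification collapses to the elementary polynomial identity
\begin{equation*}
(N+1)^2=(N+1-k)(N-k)+(2k+1)(N+1-k)+k^2,\qquad N=n-1,
\end{equation*}
which is immediate on completing the square in $N-k$. The three summands on the right are the contributions of $a_{n-1,k+1}$, $a_{n-1,k}$ and $a_{n-1,k-1}$, respectively.

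The step I expect to need the most care is the bookkeeping at the boundaries $k=n-1$ and $k=n$, where $a_{n-1,k+1}$ (and, for $k=n$, also $a_{n-1,k}$) must be read as $0$ rather than from the closed form. The point to verify is that the matching terms in the polynomial identity vanish on their own: at $k=N$ the factor $(N-k)$ kills the first summand, and at $k=N+1$ the two $(N+1-k)$ factors kill the first two summands, leaving $k^2=(N+1)^2$, so the induction stays consistent with the convention supplied by Proposition \ref{prop1}. A cleaner alternative that bypasses this casework is to pass to the generating function $A_n(z)=\sum_{k\ge0}a_{n,k}z^k$: summing the coefficient recurrence yields $A_n(z)=(z-1)^2A_{n-1}'(z)+(z-1)A_{n-1}(z)$ with $A_0(z)=1$, the same operator that governs the pgf in (\ref{fprfpgfeq}), and a one-line induction gives $A_n(z)=n!\,(z-1)^n$. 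Extracting the coefficient of $z^k$ then produces $a_{n,k}=(-1)^{n-k}n!\binom{n}{k}=(-1)^{n-k}n_{(n-k)}n_{(k)}$, which is precisely the asserted formula.
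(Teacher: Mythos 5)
Your proof is correct, and it is worth comparing with the paper's. The paper proves (\ref{pmfcompts}) by the same overall strategy — induction on $n$ through the Adomian recursion (\ref{adm2}) — but it carries the full $t$-dependent expressions through a case analysis ($k=0$, $k=1$, then $k-m>1$, $k-m=1$, $k-m=0$, $k-m<0$ separately), recomputing fractional integrals in every case; its key step in the generic case is a factorial identity that, after clearing the common factor $m!\,m!/\big(k!\,(m-k+1)!\big)$, is exactly your polynomial identity $(N+1)^2=(N+1-k)(N-k)+(2k+1)(N+1-k)+k^2$. What you do differently is to peel off the $t$-dependence once and for all: using Lemma \ref{lemma1} to show each component has the form $a_{n,k}\,(\lambda t_1^{\nu_1}t_2^{\nu_2})^n/\big(\Gamma(n\nu_1+1)\Gamma(n\nu_2+1)\big)$, so that the entire induction collapses to the numerical recurrence $a_{n,k}=(k+1)a_{n-1,k+1}-(2k+1)a_{n-1,k}+k\,a_{n-1,k-1}$, with the paper's boundary subcases absorbed by vanishing factors (and, since $a_{n,k}=(-1)^{n-k}n!\binom{n}{k}$, the convention $a_{m,j}=0$ for $j>m$ is automatic from the binomial coefficient). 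This buys a single verification in place of the paper's five computations. Your generating-function coda is genuinely not in the paper and is the slickest route: the identity $A_n(z)=n!\,(z-1)^n$ both explains why the operator $(z-1)^2\partial_z+(z-1)$ from (\ref{fprfpgfeq}) appears and yields, upon multiplying by $(\lambda t_1^{\nu_1}t_2^{\nu_2})^n/\big(\Gamma(n\nu_1+1)\Gamma(n\nu_2+1)\big)$ and summing, the pgf formula $\mathscr{G}_{\nu_1,\nu_2}(z,t_1,t_2)=\sum_{n\ge0}n!\big((z-1)\lambda t_1^{\nu_1}t_2^{\nu_2}\big)^n/\big(\Gamma(n\nu_1+1)\Gamma(n\nu_2+1)\big)$, which the paper only obtains later by resumming the pmf.
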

\begin{proof}
	From Proposition \ref{prop1}, it follows that $q_{\nu_1,\nu_2}^n(k,t_1,t_2)=0$ whenever $k-n\ge1$. From (\ref{1.0}), the result holds true for $k=n=1$. 
	
	Let us assume that (\ref{pmfcompts}) holds for any $k\geq0$ and for some $n=m\ge2$, that is,
	\begin{equation}\label{indhy1}
	q_{\nu_1,\nu_2}^m(k,t_1,t_2)=\begin{cases}
		\displaystyle\frac{m!(-\lambda t_1^{\nu_1}t_2^{\nu_2})^m}{\Gamma(m\nu_1+1)\Gamma(m\nu_2+1)},\ k=0,\,m\ge2,\vspace{0.15cm}\\
		\displaystyle\frac{(-1)^{m-k}m_{(m-k)}m_{(k)}(\lambda t_1^{\nu_1}t_2^{\nu_2})^{m}}{\Gamma(m\nu_1+1)\Gamma(m\nu_2+1)},\ m\ge k\ge1,\vspace{0.15cm}\\
		0,\ k-m\ge1.
	\end{cases}
	\end{equation}
	Then, we have the following three cases:
	
	\paragraph{\textit{Case I}} For $k=0$, from (\ref{adm2}), we get
	\begin{align*}
		q_{\nu_1,\nu_2}^{m+1}(0,t_1,t_2)&=I_{t_2}^{\nu_2}I_{t_1}^{\nu_1}(\lambda q_{\nu_1,\nu_2}^{m}(1,t_1,t_2)-\lambda q_{\nu_1,\nu_2}^{m}(0,t_1,t_2))\\
		&=\frac{(m+1)!(-\lambda)^{m+1}}{\Gamma(m\nu_1+1)\Gamma(m\nu_2+1)}I_{t_2}^{\nu_2}I_{t_1}^{\nu_1}(t_1^{m\nu_1}t_2^{m\nu_2})=\frac{(m+1)!(-\lambda t_1^{\nu_1}t_2^{\nu_2})^{m+1}}{\Gamma((m+1)\nu_1+1)\Gamma((m+1)\nu_2+1)},
	\end{align*}
	where we have used the induction hypothesis (\ref{indhy1}) to get the penultimate step. So, (\ref{pmfcompts}) holds for $k=0$ and $n=m+1$.
	\paragraph{\textit{Case II}} If $k=1$ then from (\ref{adm2}), we get
	\begin{align*}
		q_{\nu_1,\nu_2}^{m+1}(1,t_1,t_2)&=I_{t_2}^{\nu_2}I_{t_1}^{\nu_1}(2\lambda q_{\nu_1,\nu_2}^{m}(2,t_1,t_2) -3\lambda q_{\nu_1,\nu_2}^{m}(1,t_1,t_2)+\lambda q_{\nu_1,\nu_2}^{m}(0,t_1,t_2))\\
		&=\frac{(-1)^mm!(m+1)^2(\lambda t_1^{\nu_1}t_2^{\nu_2})^{m+1}}{\Gamma((m+1)\nu_1+1)\Gamma((m+1)\nu_2+1)}.
	\end{align*}
	Hence, (\ref{pmfcompts}) holds for $k=1$ and $n=m+1$ also.
	\paragraph{\textit{Case III}} Let $k\ge2$. If $k-m>1$ then the result follows from Proposition \ref{prop}.
	
	Suppose $k-m=1$. Then, $q_{\nu_1,\nu_2}^m(k,t_1,t_2)=q_{\nu_1,\nu_2}^m(k+1,t_1,t_2)=0$ and from (\ref{adm2}), we get
	\begin{align*}
		q_{\nu_1,\nu_2}^{m+1}(k,t_1,t_2)&=I_{t_1}^{\nu_1}I_{t_2}^{\nu_2}((m+1)\lambda q_{\nu_1,\nu_2}^m(m,t_1,t_2))\\
		&=\frac{(m+1)m_{(0)}m_{(m)}\lambda^{m+1}I_{t_2}^{\nu_2}I_{t_1}^{\nu_1}( t_1^{m\nu_1}t_2^{m\nu_2})}{\Gamma(m\nu_1+1)\Gamma(m\nu_2+1)}=\frac{(m+1)!(\lambda t_1^{\nu_1}t_2^{\nu_2})^{m+1}}{\Gamma((m+1)\nu_1+1)\Gamma((m+1)\nu_2+1)},
	\end{align*}
	where we have used $k=m+1$. 
	
	If $k-m=0$ then $q_{\nu_1,\nu_2}^m(k+1,t_1,t_2)=0$ and 
	\begin{align*}
		q_{\nu_1,\nu_2}^{m+1}(k,t_1,t_2)&=I_{t_2}^{\nu_2}I_{t_1}^{\nu_1}( -\lambda(2m+1)q_{\nu_1,\nu_2}^{m}(m,t_1,t_2)+\lambda mq_{\nu_1,\nu_2}^{m}(m-1,t_1,t_2))\\
		&=-\frac{(m+1)!(m+1)(\lambda t_1^{\nu_1}t_2^{\nu_2})^{m+1}}{\Gamma((m+1)\nu_1+1)\Gamma((m+1)\nu_2+1)}.
	\end{align*}
	
	For $k-m<0$, from (\ref{adm2}), we have
	\begin{align*}
		q_{\nu_1,\nu_2}^{m+1}(k,t_1,t_2)&=I_{t_2}^{\nu_2}I_{t_1}^{\nu_1}(\lambda(k+1)q_{\nu_1,\nu_2}^{m}(k+1,t_1,t_2)\\
		&\ \  -\lambda(2k+1)q_{\nu_1,\nu_2}^{m}(k,t_1,t_2)+\lambda kq_{\nu_1,\nu_2}^{m}(k-1,t_1,t_2))\\
		&=((k+1)m_{(m-k-1)}m_{(k+1)}+(2k+1)m_{(m-k)}m_{(k)}+km_{(m-k+1)}m_{(k-1)})\\
		&\ \ \cdot\frac{(-1)^{m+1-k}(\lambda t_1^{\nu_1}t_2^{\nu_2})^{m+1}}{\Gamma((m+1)\nu_1+1)\Gamma((m+1)\nu_2+1)}\\
		&=\left(\frac{(k+1)m!m!}{(k+1)!(m-k-1)!}+\frac{(2k+1)m!m!}{k!(m-k)!}+\frac{km!m!}{(k-1)!(m-k+1)!}\right)\\
		&\ \ \cdot\frac{(-1)^{m+1-k}(\lambda t_1^{\nu_1}t_2^{\nu_2})^{m+1}}{\Gamma((m+1)\nu_1+1)\Gamma((m+1)\nu_2+1)}\\
		&=\frac{(-1)^{m+1-k}m!m!(m+1)^2(\lambda t_1^{\nu_1}t_2^{\nu_2})^{m+1}}{k!(m-k+1)!\Gamma((m+1)\nu_1+1)\Gamma((m+1)\nu_2+1)}\\
		&=\frac{(-1)^{m+1-k}(m+1)_{(m+1-k)}(m+1)_{(k)}(\lambda t_1^{\nu_1}t_2^{\nu_2})^{m+1}}{\Gamma((m+1)\nu_1+1)\Gamma((m+1)\nu_2+1)}.
	\end{align*}
	Therefore, (\ref{pmfcompts}) holds for $k\ge2$ and $n=m+1$. The proof is complete using the method of induction.
	\end{proof}
	On summing (\ref{pmfcompts}) over $n=0,1,2,\dots$, we get the following result:
\begin{theorem}\label{rem2}
	The void probability of FPRF is given by
	\begin{equation}\label{v1}
		q_{\nu_1,\nu_2}(0,t_1,t_2)=\sum_{n=0}^{\infty}\frac{n!(-\lambda t_1^{\nu_1}t_2^{\nu_2})^n}{\Gamma(n\nu_1+1)\Gamma(n\nu_2+1)}.
	\end{equation}
	For $k\ge1$, its state probabilities are
	\begin{equation}\label{p1}
		q_{\nu_1,\nu_2}(k,t_1,t_2)=\sum_{n=k}^{\infty}\frac{(-1)^{n-k}n_{(n-k)}n_{(k)}(\lambda t_1^{\nu_1}t_2^{\nu_2})^{n}}{\Gamma(n\nu_1+1)\Gamma(n\nu_2+1)}.
	\end{equation}
\end{theorem}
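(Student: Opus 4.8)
The plan is to obtain both formulas directly from the Adomian representation $q_{\nu_1,\nu_2}(k,t_1,t_2)=\sum_{n=0}^{\infty}q_{\nu_1,\nu_2}^n(k,t_1,t_2)$ combined with the closed form of the series components established in (\ref{pmfcompts}); no new computation is needed beyond bookkeeping the summation range.

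First I would treat the void probability. Setting $k=0$, every component is given by the first branch of (\ref{pmfcompts}), namely $q_{\nu_1,\nu_2}^n(0,t_1,t_2)=n!(-\lambda t_1^{\nu_1}t_2^{\nu_2})^n/(\Gamma(n\nu_1+1)\Gamma(n\nu_2+1))$, which is nonzero for all $n\ge0$. Summing over $n\ge0$ then yields (\ref{v1}) verbatim, with no terms dropping out. For the state probabilities with $k\ge1$, the key observation is Proposition \ref{prop1}: the components vanish whenever $k-n\ge1$, i.e.\ for each $n$ in the range $0\le n\le k-1$. Hence the first $k$ terms of the Adomian series are identically zero and the summation effectively starts at $n=k$. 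For $n\ge k$ the middle branch of (\ref{pmfcompts}) applies, and inserting it into $\sum_{n=k}^{\infty}q_{\nu_1,\nu_2}^n(k,t_1,t_2)$ gives (\ref{p1}).

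The only substantive point is the legitimacy of summing the components termwise and the convergence of the resulting series. This is precisely the absolutely convergent series hypothesis underlying the Adomian decomposition method recalled in Section \ref{pre}, under which $q_{\nu_1,\nu_2}=\sum_n q_{\nu_1,\nu_2}^n$ is a valid identity. To make this self-contained I would verify convergence through a Stirling/ratio estimate: each series is of generalized Wright type (for $k=0$ it is the ${}_2\Psi_2$ function with upper parameters $(1,1),(1,1)$ and lower parameters $(1,\nu_1),(1,\nu_2)$ evaluated at $-\lambda t_1^{\nu_1}t_2^{\nu_2}$), so the $n$th term behaves like $n!/(\Gamma(n\nu_1+1)\Gamma(n\nu_2+1))$ times a geometric factor, and the growth of $\Gamma(n\nu_1+1)\Gamma(n\nu_2+1)$ relative to $n!$ is what governs convergence. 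I expect this convergence bookkeeping—rather than any algebra—to be the main thing to watch, particularly when $\nu_1+\nu_2$ is small, where the series may only converge for sufficiently small $t_1^{\nu_1}t_2^{\nu_2}$.
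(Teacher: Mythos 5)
Your proposal is correct and takes essentially the same route as the paper, whose entire proof is the single line preceding the theorem --- ``On summing (\ref{pmfcompts}) over $n=0,1,2,\dots$'' --- i.e., exactly your bookkeeping: for $k=0$ sum the first branch of (\ref{pmfcompts}) over all $n\ge0$, and for $k\ge1$ use Proposition \ref{prop1} to drop the terms with $n<k$ and sum the middle branch from $n=k$. Your convergence caveat is a genuine point the paper passes over in silence: the series in (\ref{v1})--(\ref{p1}) are of generalized Wright type with index $\nu_1+\nu_2-1$, so they converge for all $t_1,t_2$ when $\nu_1+\nu_2>1$, only on a bounded region when $\nu_1+\nu_2=1$, and must otherwise be read through the integral representation of the pmf rather than as convergent power series.
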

\begin{remark}
	The state probabilities of FPRF satisfy the regularity condition, that is,
	\begin{align*}
		\sum_{k=0}^{\infty}q_{\nu_1,\nu_2}(k,t_1,t_2)&=\sum_{n=0}^{\infty}\frac{n!(-\lambda t_1^{\nu_1}t_2^{\nu_2})^n}{\Gamma(n\nu_1+1)\Gamma(n\nu_2+1)}+\sum_{k=1}^{\infty}\sum_{n=k}^{\infty}\frac{(-1)^{n-k}n_{(n-k)}n_{(k)}(\lambda t_1^{\nu_1}t_2^{\nu_2})^{n}}{\Gamma(n\nu_1+1)\Gamma(n\nu_2+1)}\\
		&=\sum_{n=0}^{\infty}\frac{n!(-\lambda t_1^{\nu_1}t_2^{\nu_2})^n}{\Gamma(n\nu_1+1)\Gamma(n\nu_2+1)}+\sum_{n=1}^{\infty}\sum_{k=1}^{n}\frac{(-1)^{n-k}n_{(n-k)}n_{(k)}(\lambda t_1^{\nu_1}t_2^{\nu_2})^{n}}{\Gamma(n\nu_1+1)\Gamma(n\nu_2+1)}\\
		&=1+\sum_{n=1}^{\infty}\frac{n!(-\lambda t_1^{\nu_1}t_2^{\nu_2})^n}{\Gamma(n\nu_1+1)\Gamma(n\nu_2+1)}\bigg(1+\sum_{k=1}^{n}\binom{n}{k}(-1)^{k}\bigg)=1.
	\end{align*}
\end{remark}
\begin{remark}
	The pmf of FPRF has the following equivalent representation:
	\begin{align}
		q_{\nu_1,\nu_2}(k,t_1,t_2)&=\frac{(\lambda t_1^{\nu_1}t_2^{\nu_2})^k}{k!}\sum_{n=0}^{\infty}\frac{\Gamma(n+k+1)\Gamma(n+k+1)(-\lambda t_1^{\nu_1}t_2^{\nu_2})^{n}}{\Gamma(n\nu_1+k\nu_1+1)\Gamma(n\nu_2+k\nu_2+1)n!}\nonumber\\
		&=\frac{(\lambda t_1^{\nu_1}t_2^{\nu_2})^k}{k!}{}_2\Psi_2\left[\begin{matrix}
			(k+1,1),\ (k+1,1)\\\\
			(k\nu_1+1,\nu_1),\ (k\nu_2+1,\nu_2)
		\end{matrix}\Bigg| -\lambda t_1^{\nu_1}t_2^{\nu_2}\right],\ k\ge0,\label{c*}
	\end{align}
	where ${}_2\Psi_2(\cdot)$ is the generalized Wright function defined in (\ref{genwrit}).
	For $\nu_1=\nu_2=1$, (\ref{v1}) and (\ref{p1}) reduces to
	$q(0,t_1,t_2)=e^{-\lambda t_1t_2}$ and $q(k,t_1,t_2)={(\lambda t_1t_2)^ke^{-\lambda t_1t_2}}/{k!}$, $k\ge1$,	respectively, which coincide with the pmf of PRF given in (\ref{prfdist}).
\end{remark}

\begin{remark}
	From (\ref{fprfpgfeq}), the $n$th factorial moment
	of FPRF solves
	\begin{equation}\label{nthfm}
		\frac{\partial^{\nu_1+\nu_2}}{\partial t_1^{\nu_1}\partial t_2^{\nu_2}}\mathbb{E}\prod_{i=0}^{n-1}(\mathscr{N}_{\nu_1,\nu_2}(t_1,t_2)-i)=n^2\lambda\mathbb{E}\prod_{i=0}^{n-2}(\mathscr{N}_{\nu_1,\nu_2}(t_1,t_2)-i),\ n\ge2,
	\end{equation}
	with $\mathbb{E}\prod_{i=0}^{n-1}(\mathscr{N}_{\nu_1,\nu_2}(t_1,t_2)-i)|_{t_1=0,t_2=0}=0$. On solving (\ref{nthfm}) inductively, we get
	\begin{equation*}
		\mathbb{E}\mathscr{N}_{\nu_1,\nu_2}(t_1,t_2)(\mathscr{N}_{\nu_1,\nu_2}(t_1,t_2)-1)\dots(\mathscr{N}_{\nu_1,\nu_2}(t_1,t_2)-n+1)=\frac{(n!)^2(\lambda t_1^{\nu_1}t_2^{\nu_2})^n}{\Gamma(n\nu_1+1)\Gamma(n\nu_2+1)}.
	\end{equation*}
	Thus, the distribution of FPRF can be represented as follows:
	\begin{equation*}
		q_{\nu_1,\nu_2}(k,t_1,t_2)=\sum_{n=k}^{\infty}\frac{(-1)^{n-k}}{k!(n-k)!}\mathbb{E}\mathscr{N}_{\nu_1,\nu_2}(t_1,t_2)(\mathscr{N}_{\nu_1,\nu_2}(t_1,t_2)-1)\dots(\mathscr{N}_{\nu_1,\nu_2}(t_1,t_2)-n+1),\ k\ge0.
	\end{equation*}
\end{remark}

\begin{remark}
		On substituting $\nu_2=1$ in (\ref{c*}), we get a probability distribution $\mathrm{Pr}\{\mathscr{N}_{\nu_1,1}(t_1,t_2)=k\}=(\lambda t_1^{\nu_1}t_2)^kE_{\nu_1,\nu_1 k+1}^{k+1}(-\lambda t_1^{\nu_1}t_2)$, $k\ge0$,		
		where $E_{\alpha,\beta}^{\gamma}(\cdot)$ is the generalized Mittag-Leffler function defined in (\ref{mittag}). Similar to Theorem \ref{diffrelation}, it can be shown that $\mathscr{N}_{\nu_1,1}(t_1,t_2)\overset{d}{=} \mathcal{N}(T_{2\nu_1}(t_1),t_2)$. Here, $\{T_{2\nu_1}(t),\ t\ge0\}$ is a random process whose density is the folded solution of the fractional diffusion equation (\ref{couchyp}) and $\{\mathcal{N}(t_1,t_2),\ (t_1,t_2)\in\mathbb{R}^2_+\}$ is the PRF.
		 Thus, $\{\mathscr{N}_{\nu_1,1}(t_1,t_2),\ (t_1,t_2)\in\mathbb{R}^2_+\}$ gives a different time-changed variant of the PRF. It can be shown that $\mathscr{N}_{\nu_1,1}(t_1,t_2)\overset{d}{=} \mathcal{N}(L_{\nu_1}(t_1),t_2)$, $0<\nu_1<1$, that is, $\{\mathscr{N}_{\nu_1,1}(t_1,t_2),\ (t_1,t_2)\in\mathbb{R}^2_+\}$ is equal in distribution to a PRF on $\mathbb{R}^2_+$ where the first coordinate of time space is random which is governed by an independent inverse $\nu_1$-stable subordinator.  
		 Its mean and variance can be obtained by substituting $\nu_2=1$ in (\ref{fprfmean}) and (\ref{varN}), respectively.		
			Similarly, if we take $\nu_1=1$ then the random field $\{\mathscr{N}_{1,\nu_2}(t_1,t_2),\ (t_1,t_2)\in\mathbb{R}^2_+\}$ agrees the representation given by $\mathscr{N}_{1,\nu_2}(t_1,t_2)\overset{d}{=}\mathcal{N}(t_1,L_{\nu_2}(t_2))$, where $\{L_{\nu_2}(t),\ t\ge0\}$, $0<\nu_2<1$ is an inverse $\nu_2$-stable subordinator independent of the PRF. Also, its distribution is 
				$
				\mathrm{Pr}\{\mathscr{N}_{1,\nu_2}(t_1,t_2)=k\}=(\lambda t_1t_2^{\nu_2})^kE_{\nu_2,\nu_2 k+1}^{k+1}(-\lambda t_1t_2^{\nu_2}),\ k\ge0.
			$
\end{remark}

	Next, we obtain the pgf of FPRF. Let $\{\mathscr{N}^{1-z}_{\nu_1,\nu_2}(t_1,t_2),\ (t_1,t_2)\in\mathbb{R}^2_+\}$, $|z|<1$ be the FPRF with parameter $\lambda(1-z)>0$. Then, on using (\ref{v1}) and (\ref{p1}), we have
	\begin{align*}
		\mathscr{G}_{\nu_1,\nu_2}(z,t_1,t_2)&=\sum_{k=0}^{\infty}z^kq_{\nu_1,\nu_2}(k,t_1,t_2)\\
		&=\sum_{n=0}^{\infty}\frac{n!(-\lambda t_1^{\nu_1}t_2^{\nu_2})^n}{\Gamma(n\nu_1+1)\Gamma(n\nu_2+1)}+\sum_{k=1}^{\infty}\sum_{n=k}^{\infty}z^k\frac{(-1)^{n-k}n_{(n-k)}n_{(k)}(\lambda t_1^{\nu_1}t_2^{\nu_2})^{n}}{\Gamma(n\nu_1+1)\Gamma(n\nu_2+1)}\\
		&=1+\sum_{n=1}^{\infty}\frac{n!(\lambda t_1^{\nu_1}t_2^{\nu_2})^n}{\Gamma(n\nu_1+1)\Gamma(n\nu_2+1)}\left((-1)^n+\sum_{k=1}^{n}\binom{n}{k}z^k(-1)^{n-k}\right)\\
		&=1+\sum_{n=1}^{\infty}\frac{n!(-(1-z)\lambda t_1^{\nu_1}t_2^{\nu_2})^n}{\Gamma(n\nu_1+1)\Gamma(n\nu_2+1)}=\mathrm{Pr}\{\mathscr{N}^{1-z}_{\nu_1,\nu_2}(t_1,t_2)=0\}.
	\end{align*}
	For $\nu_1=\nu_2=1$, it reduces to the pgf of PRF. It is given by  
$
	\mathbb{E}z^{\mathcal{N}(t_1,t_2)}=\exp((z-1)\lambda t_1t_2),\ |z|<1.
$

\begin{remark}
	In view of Theorem 1.1 and Theorem 1.2 of  Baddeley (2009), the FPRF can be characterize by its capacity function $T_{\nu_1,\nu_2}(K)\coloneqq1-\mathrm{Pr}\{\mathscr{N}_{\nu_1,\nu_2}(K)=0\},$ where $K\subset\mathbb{R}^2_+$ is a compact set. For $(t_1,t_2)\in\mathbb{R}^2_+$, it is given by
	\begin{equation*}
		T_{\nu_1,\nu_2}([0,t_1]\times[0,t_2])\coloneqq1-\sum_{n=0}^{\infty}\frac{n!(-\lambda t_1^{\nu_1}t_2^{\nu_2})^n}{\Gamma(n\nu_1+1)\Gamma(n\nu_2+1)}=-\sum_{n=1}^{\infty}\frac{n!(-\lambda t_1^{\nu_1}t_2^{\nu_2})^n}{\Gamma(n\nu_1+1)\Gamma(n\nu_2+1)}.
	\end{equation*}
\end{remark}

Let us consider a sequence of independent and identically distributed (iid) non-negative random variables $Y_1$, $Y_2,\dots,Y_{\mathscr{N}_{\nu_1,\nu_2}(t_1,t_2)}$ with common distribution function $F(\cdot)$. It is assumed that $\mathscr{N}_{\nu_1,\nu_2}(t_1,t_2)$ is independent of $Y_i$'s for all $(t_1,t_2)\in\mathbb{R}^2_+$. The following result provides an explicit expression for the conditional distribution of $k$th order statistics $Y_{(k)}^{\mathscr{N}_{\nu_1,\nu_2}(t_1,t_2)}$, given that the event $\{\mathscr{N}_{\nu_1,\nu_2}(t_1,t_2)\ge k\}$ has occurred.
\begin{theorem}
	Let $\{\mathscr{N}_{\nu_1,\nu_2}(t_1,t_2),\ (t_1,t_2)\in\mathbb{R}^2_+\}$ and $\{\mathscr{N}^{F(v)}_{\nu_1,\nu_2}(t_1,t_2),\ (t_1,t_2)\in\mathbb{R}^2_+\}$ be two fractional Poisson random fields with parameters $\lambda>0$ and $\lambda F(v)>0$, respectively. Then, conditional on the event $\{\mathscr{N}_{\nu_1,\nu_2}(t_1,t_2)\ge k\}$, $k\ge1$, the distribution of $k$th order statistics is given by
	\begin{equation}\label{condord}
		\mathrm{Pr}\left\{Y_{(k)}^{\mathscr{N}_{\nu_1,\nu_2}(t_1,t_2)}\leq v|\mathscr{N}_{\nu_1,\nu_2}(t_1,t_2)\ge k\right\}=\frac{\mathrm{Pr}\{\mathscr{N}^{F(v)}_{\nu_1,\nu_2}(t_1,t_2)\ge k\}}{\mathrm{Pr}\{\mathscr{N}_{\nu_1,\nu_2}(t_1,t_2)\ge k\}}.
	\end{equation}
\end{theorem}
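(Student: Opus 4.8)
The plan is to reduce (\ref{condord}) to a thinning property of the FPRF. For fixed $v$, let $M$ denote the number of the variables $Y_1,\dots,Y_{\mathscr{N}_{\nu_1,\nu_2}(t_1,t_2)}$ that do not exceed $v$. Since the $Y_i$ are iid with distribution function $F$ and are independent of the field, on the event $\{\mathscr{N}_{\nu_1,\nu_2}(t_1,t_2)\ge k\}$ the $k$th order statistic satisfies $Y_{(k)}^{\mathscr{N}_{\nu_1,\nu_2}(t_1,t_2)}\le v$ if and only if at least $k$ of the $Y_i$ fall in $(-\infty,v]$, that is, if and only if $M\ge k$. Moreover $M\ge k$ already forces $\mathscr{N}_{\nu_1,\nu_2}(t_1,t_2)\ge k$, so
\begin{equation*}
	\mathrm{Pr}\left\{Y_{(k)}^{\mathscr{N}_{\nu_1,\nu_2}(t_1,t_2)}\le v,\ \mathscr{N}_{\nu_1,\nu_2}(t_1,t_2)\ge k\right\}=\mathrm{Pr}\{M\ge k\}.
\end{equation*}
Dividing by $\mathrm{Pr}\{\mathscr{N}_{\nu_1,\nu_2}(t_1,t_2)\ge k\}$ reproduces the conditional probability on the left of (\ref{condord}), so it remains only to identify the law of $M$.

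The key step is to show that $M\overset{d}{=}\mathscr{N}^{F(v)}_{\nu_1,\nu_2}(t_1,t_2)$; that is, independently retaining each point of the FPRF with probability $F(v)$ yields an FPRF with parameter $\lambda F(v)$. I would establish this through the time-change representation (\ref{foldrelation}) of Theorem \ref{diffrelation}. Conditionally on $T_{2\nu_1}(t_1)=s_1$ and $T_{2\nu_2}(t_2)=s_2$, the field $\mathscr{N}_{\nu_1,\nu_2}(t_1,t_2)$ is distributed as the Poisson variable $\mathcal{N}(s_1,s_2)$ of mean $\lambda s_1s_2$, and the classical thinning property of the PRF shows that the retained count is then Poisson with mean $\lambda F(v)s_1s_2$. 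Averaging over the independent law of $(T_{2\nu_1}(t_1),T_{2\nu_2}(t_2))$ and invoking (\ref{foldrelation}) once more, now with parameter $\lambda F(v)$, identifies the unconditional law of $M$ as that of $\mathscr{N}^{F(v)}_{\nu_1,\nu_2}(t_1,t_2)$.

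Alternatively, the same identification can be carried out entirely at the level of generating functions, avoiding conditioning. Since each $Y_i$ lands in $(-\infty,v]$ independently with probability $F(v)$, the pgf of $M$ is $\mathbb{E}\,z^{M}=\mathscr{G}_{\nu_1,\nu_2}(1-F(v)(1-z),t_1,t_2)$. Using the pgf formula $\mathscr{G}_{\nu_1,\nu_2}(w,t_1,t_2)=\mathrm{Pr}\{\mathscr{N}^{1-w}_{\nu_1,\nu_2}(t_1,t_2)=0\}$ derived just above, in which $\mathscr{N}^{1-w}_{\nu_1,\nu_2}$ is the FPRF of parameter $\lambda(1-w)$, the choice $w=1-F(v)(1-z)$ gives $\mathbb{E}\,z^{M}=\mathrm{Pr}\{\mathscr{N}^{F(v)(1-z)}_{\nu_1,\nu_2}(t_1,t_2)=0\}$, the void probability of the FPRF with parameter $\lambda F(v)(1-z)$. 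Applying the same pgf formula to the field of parameter $\lambda F(v)$ shows that this void probability equals $\mathscr{G}^{F(v)}_{\nu_1,\nu_2}(z,t_1,t_2)$, the pgf of $\mathscr{N}^{F(v)}_{\nu_1,\nu_2}(t_1,t_2)$; hence $M\overset{d}{=}\mathscr{N}^{F(v)}_{\nu_1,\nu_2}(t_1,t_2)$. Either identification, combined with the reduction of the first paragraph, yields (\ref{condord}).

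I expect the main obstacle to be precisely the thinning identity $M\overset{d}{=}\mathscr{N}^{F(v)}_{\nu_1,\nu_2}(t_1,t_2)$: the delicate point is that thinning must commute with the time change. This is exactly what the conditioning argument guarantees, since the random time change enters only through the mean $\lambda s_1 s_2$ of the underlying Poisson field, while the independent marking of points acts on that field without interacting with $(T_{2\nu_1}(t_1),T_{2\nu_2}(t_2))$.
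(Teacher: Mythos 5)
Your proposal is correct, and it takes a genuinely different route from the paper. The paper's proof is a direct computation: it conditions on $\{\mathscr{N}_{\nu_1,\nu_2}(t_1,t_2)=r\}$, writes $\mathrm{Pr}\{Y_{(k)}^{\mathscr{N}_{\nu_1,\nu_2}(t_1,t_2)}\leq v\,|\,\mathscr{N}_{\nu_1,\nu_2}(t_1,t_2)=r\}$ as the binomial tail $\sum_{m=k}^{r}\binom{r}{m}F^m(v)(1-F(v))^{r-m}$, substitutes the series pmf (\ref{p1}), and then interchanges the three resulting sums and collapses binomial coefficients until the series pmf of the parameter-$\lambda F(v)$ field emerges. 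You instead extract the structural statement that does all the work, namely that the thinned count $M=\sum_{i=1}^{\mathscr{N}_{\nu_1,\nu_2}(t_1,t_2)}\mathbf{1}\{Y_i\leq v\}$ is again an FPRF variable with parameter $\lambda F(v)$, and combine it with the event identity $\{Y_{(k)}^{\mathscr{N}_{\nu_1,\nu_2}(t_1,t_2)}\leq v\}\cap\{\mathscr{N}_{\nu_1,\nu_2}(t_1,t_2)\geq k\}=\{M\geq k\}$. Both of your identifications of the law of $M$ are sound. Approach A (condition on $(T_{2\nu_1}(t_1),T_{2\nu_2}(t_2))$, apply classical Poisson thinning at fixed mean $\lambda s_1s_2$, then reassemble with Theorem \ref{diffrelation} at parameter $\lambda F(v)$, which is legitimate because the laws of the $T_{2\nu_i}$ do not involve $\lambda$) avoids the series representation entirely and shows the result holds for any count that is a mixture of Poisson laws; the one point worth stating explicitly is that, since (\ref{foldrelation}) is only an equality of one-dimensional distributions, you may transfer the computation of the law of $M$ to the coupled model $\mathcal{N}(T_{2\nu_1}(t_1),T_{2\nu_2}(t_2))$ with marks independent of everything, because that law depends on $(\mathscr{N}_{\nu_1,\nu_2}(t_1,t_2),(Y_i)_i)$ only through the marginal law of the count and the Bernoulli parameter $F(v)$. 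Approach B delegates all combinatorics to the identity $\mathscr{G}_{\nu_1,\nu_2}(z,t_1,t_2)=\mathrm{Pr}\{\mathscr{N}^{1-z}_{\nu_1,\nu_2}(t_1,t_2)=0\}$, which the paper establishes immediately before this theorem, so it is an honest shortcut rather than a hidden repetition of the paper's sums. What the paper's computation buys is a self-contained verification at the level of the series (\ref{p1}); what your argument buys is brevity, a conceptual explanation of why the statement is true (thinning commutes with the random time change), and immediate access to the minimum/maximum special cases treated in Remark \ref{rem}.
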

\begin{proof}
	For $(t_1,t_2)\in\mathbb{R}^2_+$ and $k\ge1$, we have
	{\small\begin{align*}
		\mathrm{Pr}&\left\{Y_{(k)}^{\mathscr{N}_{\nu_1,\nu_2}(t_1,t_2)}\leq v|\mathscr{N}_{\nu_1,\nu_2}(t_1,t_2)\ge k\right\}\\
		&=\frac{\sum_{r=k}^{\infty}\mathrm{Pr}\left\{Y_{(k)}^{\mathscr{N}_{\nu_1,\nu_2}(t_1,t_2)}\leq v,\mathscr{N}_{\nu_1,\nu_2}(t_1,t_2)=r\right\}}{\mathrm{Pr}\{\mathscr{N}_{\nu_1,\nu_2}(t_1,t_2)\ge k\}}\\
		&=\frac{1}{\mathrm{Pr}\{\mathscr{N}_{\nu_1,\nu_2}(t_1,t_2)\ge k\}}\sum_{r=k}^{\infty}\mathrm{Pr}\left\{Y_{(k)}^{\mathscr{N}_{\nu_1,\nu_2}(t_1,t_2)}\leq v|\mathscr{N}_{\nu_1,\nu_2}(t_1,t_2)=r\right\}\mathrm{Pr}\{\mathscr{N}_{\nu_1,\nu_2}(t_1,t_2)=r\}\\
		&=\frac{1}{\mathrm{Pr}\{\mathscr{N}_{\nu_1,\nu_2}(t_1,t_2)\ge k\}}\sum_{r=k}^{\infty}\sum_{m=k}^{r}\binom{r}{m}F^m(v)(1-F(v))^{r-m}\sum_{n=r}^{\infty}\frac{(-1)^{n-r}n_{(n-r)}n_{(r)}(\lambda t_1^{\nu_1}t_2^{\nu_2})^{n}}{\Gamma(n\nu_1+1)\Gamma(n\nu_2+1)}\\
		&=\frac{1}{\mathrm{Pr}\{\mathscr{N}_{\nu_1,\nu_2}(t_1,t_2)\ge k\}}\sum_{n=k}^{\infty}\frac{n!(-\lambda t_1^{\nu_1}t_2^{\nu_2})^n}{\Gamma(n\nu_1+1)\Gamma(n\nu_2+1)}\sum_{r=k}^{n}(-1)^r\binom{n}{r} \sum_{m=k}^{r}\binom{r}{m}F^m(v)(1-F(v))^{r-m}\\
		&=\frac{1}{\mathrm{Pr}\{\mathscr{N}_{\nu_1,\nu_2}(t_1,t_2)\ge k\}}\sum_{n=k}^{\infty}\frac{n!(-\lambda t_1^{\nu_1}t_2^{\nu_2})^n}{\Gamma(n\nu_1+1)\Gamma(n\nu_2+1)}\sum_{m=k}^{n}F^m(v)\binom{n}{m} \sum_{r=m}^{n}\binom{n-m}{r-m}(-1)^{r}(1-F(v))^{r-m}\\
		&=\frac{1}{\mathrm{Pr}\{\mathscr{N}_{\nu_1,\nu_2}(t_1,t_2)\ge k\}}\sum_{n=k}^{\infty}\frac{n!(-\lambda t_1^{\nu_1}t_2^{\nu_2})^n}{\Gamma(n\nu_1+1)\Gamma(n\nu_2+1)}F^n(v)\sum_{m=k}^{n}\binom{n}{m}(-1)^{m}\\
		&=\frac{1}{\mathrm{Pr}\{\mathscr{N}_{\nu_1,\nu_2}(t_1,t_2)\ge k\}}\sum_{m=k}^{\infty}\sum_{n=m}^{\infty}\binom{n}{m}\frac{(-1)^{n+m}n!(\lambda t_1^{\nu_1}t_2^{\nu_2}F(v))^n}{\Gamma(n\nu_1+1)\Gamma(n\nu_2+1)}\\
		&=\frac{1}{\mathrm{Pr}\{\mathscr{N}_{\nu_1,\nu_2}(t_1,t_2)\ge k\}}\sum_{m=k}^{\infty}\mathrm{Pr}\{\mathscr{N}^{F(v)}_{\nu_1,\nu_2}(t_1,t_2)=m\},
	\end{align*}}
	where the last step follows from (\ref{p1}). This completes the proof.
\end{proof}
\begin{remark}
	Note that $\lambda F(v)\to\lambda$ as $v\to\infty$. Thus, both sides of (\ref{condord}) converges to one as $v\to\infty$.
\end{remark}

\begin{remark}\label{rem}
	 On taking $k=1$ in (\ref{condord}), we get the conditional distribution of the minimum order statistics $\min\{Y_1,Y_2,\dots,Y_{\mathscr{N}_{\nu_1,\nu_2}(t_1,t_2)}\}$ as follows:
	\begin{equation*}
		\mathrm{Pr}\big\{\min_{1\leq m\leq \mathscr{N}_{\nu_1,\nu_2}(t_1,t_2)}Y_m\leq v|\mathscr{N}_{\nu_1,\nu_2}(t_1,t_2)\ge1\big\}=\frac{1-\mathrm{Pr}\{\mathscr{N}_{\nu_1,\nu_2}^{F(v)}(t_1,t_2)=0\}}{1-\mathrm{Pr}\{\mathscr{N}_{\nu_1,\nu_2}(t_1,t_2)=0\}}.
	\end{equation*}	
	Also, the conditional distribution of the maximum order statistics $\max\{Y_1,Y_2,\dots,Y_{\mathscr{N}_{\nu_1,\nu_2}(t_1,t_2)}\}$ is given by
	\begin{align}
		\mathrm{Pr}\big\{&\max_{1\leq m\leq \mathscr{N}_{\nu_1,\nu_2}(t_1,t_2)}Y_m\leq v|\mathscr{N}_{\nu_1,\nu_2}(t_1,t_2)\ge 1\big\}\nonumber\\
		&=\frac{1}{\mathrm{Pr}\{\mathscr{N}_{\nu_1,\nu_2}(t_1,t_2)\ge 1\}}\sum_{j=1}^{\infty}\mathrm{Pr}\big\{\max\limits_{1\leq m\leq j}Y_m\leq v\big\}\mathrm{Pr}\{\mathscr{N}_{\nu_1,\nu_2}(t_1,t_2)=j\}\nonumber\\
		&=\frac{1}{\mathrm{Pr}\{\mathscr{N}_{\nu_1,\nu_2}(t_1,t_2)\ge 1\}}\sum_{j=1}^{\infty}F^j(v)\sum_{n=j}^{\infty}\frac{(-1)^{n-j}n_{(n-j)}n_{(j)}(\lambda t_1^{\nu_1}t_2^{\nu_2})^{n}}{\Gamma(n\nu_1+1)\Gamma(n\nu_2+1)}\nonumber\\
		&=\frac{1}{\mathrm{Pr}\{\mathscr{N}_{\nu_1,\nu_2}(t_1,t_2)\ge 1\}}\sum_{n=1}^{\infty}\frac{n!(\lambda t_1^{\nu_1}t_2^{\nu_2})^n}{\Gamma(n\nu_1+1)\Gamma(n\nu_2+1)}\sum_{j=1}^{n}\binom{n}{j}(-1)^{n-j}F^j(v)\nonumber\\
		&=\frac{1}{\mathrm{Pr}\{\mathscr{N}_{\nu_1,\nu_2}(t_1,t_2)\ge 1\}}\sum_{n=1}^{\infty}\frac{n!(\lambda t_1^{\nu_1}t_2^{\nu_2})^n}{\Gamma(n\nu_1+1)\Gamma(n\nu_2+1)}((F(v)-1)^n-(-1)^n)\nonumber\\
		&=\frac{\mathrm{Pr}\{\mathscr{N}_{\nu_1,\nu_2}^{1-F(v)}(t_1,t_2)=0\}-\mathrm{Pr}\{\mathscr{N}_{\nu_1,\nu_2}(t_1,t_2)=0\}}{1-\mathrm{Pr}\{\mathscr{N}_{\nu_1,\nu_2}(t_1,t_2)=0\}},\label{max}
	\end{align}
	where $\left\{\mathscr{N}_{\nu_1,\nu_2}^{1-F(v)}(t_1,t_2),\ (t_1,t_2)\in\mathbb{R}^2_+\right\}$ is the FPRF with parameter $\lambda(1-F(v))$. 
	
	In particular, for $\nu_1=\nu_2=1$, (\ref{max}) reduces to
	\begin{equation*}
		\mathrm{Pr}\big\{\max\limits_{1\leq m\leq \mathcal{N}(t_1,t_2)}Y_m\leq v|\mathcal{N}(t_1,t_2)\ge 1\big\}=\frac{e^{-\lambda(1-F(v))t_1t_2}-e^{-\lambda t_1t_2}}{1-e^{-\lambda t_1t_2}},
	\end{equation*}
	which converges to $1$ and $0$ as $v\to\infty$ and $v\to0$, respectively.
	
	Further, if we assume that $\max_{1\leq m\leq \mathscr{N}_{\nu_1,\nu_2}(t_1,t_2)}Y_m=-\infty$ whenever $\mathscr{N}_{\nu_1,\nu_2}(t_1,t_2)=0$ then $\{\mathscr{N}_{\nu_1,\nu_2}(t_1,t_2)=0\}$ $\subset$ $\{\max_{1\leq m\leq \mathscr{N}_{\nu_1,\nu_2}(t_1,t_2)}Y_m\leq v\}$. So,
	\begin{align*}
		\mathrm{Pr}\big\{\max\limits_{1\leq m\leq \mathscr{N}_{\nu_1,\nu_2}(t_1,t_2)}Y_m\leq v\big\}&=\sum_{k=0}^{\infty}\mathrm{Pr}\big\{\max\limits_{1\leq m\leq k}Y_m\leq v\big\}\mathrm{Pr}\{\mathscr{N}_{\nu_1,\nu_2}(t_1,t_2)=k\}\\
		&=\sum_{k=0}^{\infty}\sum_{n=k}^{\infty}F^k(v)\frac{(-1)^{n-k}n_{(n-k)}n_{(k)}(\lambda t_1^{\nu_1}t_2^{\nu_2})^{n}}{\Gamma(n\nu_1+1)\Gamma(n\nu_2+1)}\\
		&=\sum_{n=0}^{\infty}\frac{n!(\lambda t_1^{\nu_1}t_2^{\nu_2})^n}{\Gamma(n\nu_1+1)\Gamma(n\nu_2+1)}\sum_{k=0}^{n}\binom{n}{k}(-1)^{n-k}F^k(v)\\
		&=\mathrm{Pr}\{\mathscr{N}^{1-F(v)}_{\nu_1,\nu_2}(t_1,t_2)=0\}.
	\end{align*}
	Similarly, we take $\min_{1\leq m\leq \mathscr{N}_{\nu_1,\nu_2}(t_1,t_2)}Y_m=\infty$ whenever $\mathscr{N}_{\nu_1,\nu_2}(t_1,t_2)=0$. Then, we have
	$
	\mathrm{Pr}\{\min_{1\leq m\leq \mathscr{N}_{\nu_1,\nu_2}(t_1,t_2)}Y_m> v\}=$ $\mathrm{Pr}\{\mathscr{N}^{F(v)}_{\nu_1,\nu_2}(t_1,t_2)=0\}$, where $\{\mathscr{N}^{F(v)}_{\nu_1,\nu_2}(t_1,t_2),\ (t_1,t_2)\in\mathbb{R}^2_+\}$ is the FPRF with parameter $\lambda F(v)>0$.
\end{remark}
\subsection{A generalization of PRF}\label{AFPRF}
Here, we consider a generalization of the PRF on $\mathbb{R}^2_+$. 
 
 Beghin and Orsingher (2009) gave a different fractional Poisson process $\{\hat{N}_\nu(t),\ t\ge0\}$, $0<\nu\leq1$ whose pmf is given by
$
	\mathrm{Pr}\{\hat{N}_\nu(t)=k\}={(\lambda t)^k}/({\Gamma(k\nu+1)E_{\nu,1}(\lambda t)}),\ k\ge0.
$
 It does not possess the independent increment and the memoryless  properties of the Poisson process. 
 In the similar context, we introduce a generalization of the Poisson random fields on $\mathbb{R}^2_+$.  
 
 Let us consider a random field  $\{\hat{\mathscr{N}}_{\nu_1,\nu_2}(t_1,t_2),\ (t_1,t_2)\in\mathbb{R}^2_+\}$ whose one dimensional distribution is given by
\begin{equation}\label{2nddeffprf}
	\mathrm{Pr}\{\hat{\mathscr{N}}_{\nu_1,\nu_2}(t_1,t_2)=k\}=\frac{(\lambda t_1^{\nu_1}t_2^{\nu_2})^k}{\Gamma(k\nu_1+\nu_2)E_{\nu_1,\nu_2}(\lambda t_1^{\nu_1}t_2^{\nu_2})},\ k\ge0.
\end{equation}
Note that (\ref{2nddeffprf}) is indeed a pmf, that is, it satisfies the regularity conditions. For $\nu_1=\nu_2=1$, the distribution in (\ref{2nddeffprf}) coincide with the distribution of PRF given in (\ref{prfdist}). The mean of $\hat{\mathscr{N}}_{\nu_1,\nu_2}(t_1,t_2)$ is given by 
\begin{align*}
	\mathbb{E}\hat{\mathscr{N}}_{\nu_1,\nu_2}(t_1,t_2)&=\frac{1}{E_{\nu_1,\nu_2}(\lambda t_1^{\nu_1}t_2^{\nu_2})}\sum_{k=0}^{\infty}k\frac{(\lambda t_1^{\nu_1}t_2^{\nu_2})^k}{\Gamma(k\nu_1+\nu_2)}\\
	&=\frac{\lambda}{E_{\nu_1,\nu_2}(\lambda t_1^{\nu_1}t_2^{\nu_2})}\frac{\partial}{\partial\lambda}E_{\nu_1,\nu_2}(\lambda t_1^{\nu_1}t_2^{\nu_2})=\frac{\lambda t_1^{\nu_1}t_2^{\nu_2} E_{\nu_1,\nu_1+\nu_2}^2(\lambda t_1^{\nu_1}t_2^{\nu_2})}{E_{\nu_1,\nu_2}(\lambda t_1^{\nu_1}t_2^{\nu_2})},
\end{align*}
where we have used (\ref{der2mittag}). Here, $E_{\alpha,\beta}^\gamma(\cdot)$ is the generalized Mittag-Leffler function. Also, the variance of $\hat{\mathscr{N}}_{\nu_1,\nu_2}(t_1,t_2)$ is
\begin{align*}
	\mathbb{V}\mathrm{ar}\hat{\mathscr{N}}_{\nu_1,\nu_2}(t_1,t_2)&=\mathbb{E}\hat{\mathscr{N}}_{\nu_1,\nu_2}^2(t_1,t_2)-(\mathbb{E}\hat{\mathscr{N}}_{\nu_1,\nu_2}(t_1,t_2))^2\\
	&=\frac{1}{E_{\nu_1,\nu_2}(\lambda t_1^{\nu_1}t_2^{\nu_2})}\sum_{k=0}^{\infty}k^2\frac{(\lambda t_1^{\nu_1}t_2^{\nu_2})^k}{\Gamma(k\nu_1+\nu_2)}-(\mathbb{E}\hat{\mathscr{N}}_{\nu_1,\nu_2}(t_1,t_2))^2\\
	&=\frac{\lambda}{E_{\nu_1,\nu_2}(\lambda t_1^{\nu_1}t_2^{\nu_2})}\left(\frac{\partial}{\partial\lambda}E_{\nu_1,\nu_2}(\lambda t_1^{\nu_1}t_2^{\nu_2})+\lambda\frac{\partial^2}{\partial\lambda^2}E_{\nu_1,\nu_2}(\lambda t_1^{\nu_1}t_2^{\nu_2})\right)-(\mathbb{E}\hat{\mathscr{N}}_{\nu_1,\nu_2}(t_1,t_2))^2\\
	&=\frac{2(\lambda t_1^{\nu_1}t_2^{\nu_2})^2 E_{\nu_1,\nu_2+2\nu_1}^{3}(\lambda t_1^{\nu_1}t_2^{\nu_2})}{E_{\nu_1,\nu_2}(\lambda t_1^{\nu_1}t_2^{\nu_2})}+\mathbb{E}\hat{\mathscr{N}}_{\nu_1,\nu_2}(t_1,t_2)\left(1-\mathbb{E}\hat{\mathscr{N}}_{\nu_1,\nu_2}(t_1,t_2)\right).
\end{align*}

On taking $\nu_1=\nu_2=1$, we get $	\mathbb{E}\hat{\mathscr{N}}_{1,1}(t_1,t_2)=\mathbb{V}\mathrm{ar}\hat{\mathscr{N}}_{1,1}(t_1,t_2)=\lambda t_1t_2$ which coincide with the mean and variance of the PRF, respectively. Moreover, its pgf is 
\begin{equation*}
	\mathbb{E}z^{\hat{\mathscr{N}}_{\nu_1,\nu_2}(t_1,t_2)}=\sum_{k=0}^{\infty}z^k	\mathrm{Pr}\{\hat{\mathscr{N}}_{\nu_1,\nu_2}(t_1,t_2)=k\}=\frac{E_{\nu_1,\nu_2}(\lambda t_1^{\nu_1}t_2^{\nu_2}z)}{E_{\nu_1,\nu_2}(\lambda t_1^{\nu_1}t_2^{\nu_2})},\ |z|\leq1.
\end{equation*}

\begin{remark}
	Let $Y_1$, $Y_2,\dots$, $Y_{\hat{\mathscr{N}}_{\nu_1,\nu_2}(t_1,t_2)}$ be non-negative random variables with common distribution function $F(\cdot)$. Also, let $Y_i$'s be independent of $\hat{\mathscr{N}}_{\nu_1,\nu_2}(t_1,t_2)$. As done for $\mathscr{N}_{\nu_1,\nu_2}(t_1,t_2)$, the distribution of minimum and maximum order statistics are given as follows:
	\begin{equation*}
	\mathrm{Pr}\big\{\min_{1\leq m\leq \hat{\mathscr{N}}(t_1,t_2)}Y_n> v\big\}=\frac{E_{\nu_1,\nu_2}(\lambda(1-F(v))t_1^{\nu_1}t_2^{\nu_2})}{E_{\nu_1,\nu_2}(\lambda t_1^{\nu_1}t_2^{\nu_2})}
	\end{equation*}
	and
	\begin{equation*}
		\mathrm{Pr}\big\{\max_{1\leq m\leq \hat{\mathscr{N}}(t_1,t_2)}Y_n\leq v\big\}=\frac{E_{\nu_1,\nu_2}(\lambda F(v)t_1^{\nu_1}t_2^{\nu_2})}{E_{\nu_1,\nu_2}(\lambda t_1^{\nu_1}t_2^{\nu_2})}.
	\end{equation*}
\end{remark}
\section{Generalized Poisson random field on $\mathbb{R}^d_+$}\label{sec5} In this section, we introduce and study a generalization of the PRF on finite dimensional Euclidean space by using the generalized Mittag-Leffler function.

Let $\mathcal{B}_{\mathbb{R}_+^d}$ be the Borel sigma algebra on $\mathbb{R}_+^d$, $d\ge1$ and $|B|$ denote the Lebesgue measure of $B\in\mathcal{B}_{\mathbb{R}^d}$. For $|B|<\infty$,
  let us consider a sequence $\{p_{\alpha,\gamma}(k,B),\ k\ge0\}$, $0<\alpha\leq1$, $0<\gamma\leq1$ of real numbers defined as follows:
\begin{equation}\label{genfprfdef}
p_{\alpha,\gamma}(k,B)=\frac{(\gamma)_k(\lambda |B|^\alpha)^k}{k!}E_{\alpha,\alpha k+1}^{\gamma+k}(-\lambda |B|^\alpha),\ k\ge0,
\end{equation}
where $(\gamma)_k=\gamma(\gamma+1)\dots(\gamma+k-1)$ with $(\gamma)_{0}=1$.

Equivalently,
\begin{equation*}
	p_{\alpha,\gamma}(k,B)=\frac{(-\lambda)^k}{k!}\frac{\partial^k}{\partial \lambda^k}E_{\alpha,1}^\gamma(-\lambda |B|^\alpha),
\end{equation*}
which is obtained using (\ref{der2mittag}). It is non-negative due to the complete monotonic characteristic of the generalized Mittag-Leffler function, that is, (see  Mainardi and Garrappa (2015), G\'orska \textit{et al.} (2021))
\begin{equation*}
	(-1)^k\frac{\mathrm{d}^k}{\mathrm{d}x^k}E_{\alpha,1}^\gamma(-x)\ge0
\end{equation*}
for all $k\ge0$ and $x\ge0$. 

On using (\ref{genfprfdef}), we get the following Laplace transform:
\begin{align*}
	\int_{0}^{\infty}e^{-w|B|}\sum_{k=0}^{\infty}p_{\alpha,\gamma}(k,B)\,\mathrm{d}|B|&=\sum_{k=0}^{\infty}\frac{(\gamma)_k\lambda^k}{k!}\int_{0}^{\infty}e^{-w|B|}|B|^{\alpha k}E_{\alpha,\alpha k+1}^{\gamma+k}(-\lambda |B|^\alpha)\,\mathrm{d}|B|\\
	&=\sum_{k=0}^{\infty}\frac{(\gamma)_k\lambda^k}{k!}\frac{w^{\alpha\gamma-1}}{(w^\alpha+\lambda)^{\gamma+k}},\ \ (\text{using (\ref{wetlapmittag})})\\
	&=\frac{w^{\alpha\gamma-1}}{(w^\alpha+\lambda)^\gamma}\sum_{k=0}^{\infty}\frac{(-\gamma)_{(k)}}{k!}\left(-\frac{\lambda }{w^\alpha+\lambda}\right)^k\\
	&=\frac{w^{\alpha\gamma-1}}{(w^\alpha+\lambda )^\gamma}\left(1-\frac{\lambda}{w^\alpha+\lambda }\right)^{-\gamma}=\frac{1}{w},\ w>0,
\end{align*}
where we have used $(\gamma)_k=(-1)^k(-\gamma)_{(k)}$ and the generalized binomial theorem to get the penultimate step. Thus, the sequence defined in (\ref{genfprfdef}) is a valid probability distribution.
In particular, for $\alpha=\gamma=1$, it reduces to the distribution of PRF on $\mathbb{R}^d_+$ given in (\ref{dprfdist}).

Let $\{\mathscr{N}_{\alpha,\gamma}(B),\ B\in\mathcal{B}_{\mathbb{R}^d_+}\}$ be the random field on $\mathbb{R}^d_+$ whose distribution is given by (\ref{genfprfdef}), that is, $\mathrm{Pr}\{\mathscr{N}_{\alpha,\gamma}(B)=k\}=p_{\alpha,\gamma}(k,B)$, $k\ge0$. Then, the Laplace transform of its pgf $G_{\alpha,\gamma}(z,B)\coloneqq\sum_{k=0}^{\infty}z^kp_{\alpha,\gamma}^\delta(k,B)$, $|z|\leq1$ is
	\begin{align*}
		\int_{0}^{\infty}e^{-w|B|}G_{\alpha,\gamma}(z,B)\,\mathrm{d}t_2&=\sum_{k=0}^{\infty}\frac{(\gamma)_k(z\lambda )^k}{k!}\frac{w^{\alpha\gamma-1}}{(w^\alpha+\lambda)^{\gamma+k}}\\
		&=\frac{w^{\alpha\gamma-1}}{(w^\alpha+\lambda)^\gamma}\sum_{k=0}^{\infty}\frac{(-\gamma)_{(k)}}{k!}\left(-\frac{z\lambda}{w^\alpha+\lambda}\right)^k\\
		&=\frac{w^{\alpha\gamma-1}}{(w^\alpha+(1-z)\lambda)^\gamma},\ w>0,
	\end{align*}
	where we have used (\ref{wetlapmittag}). 
	Its inverse Laplace transform yields $G_{\alpha,\gamma}(z,B)=E_{\alpha,1}^\gamma((z-1)\lambda|B|^\alpha).$
	
	 The $n$th factorial moment of $\mathscr{N}_{\alpha,\gamma}(B)$ is given by
\begin{equation*}
	\mathbb{E}\mathscr{N}_{\alpha,\gamma}(B)(\mathscr{N}_{\alpha,\gamma}(B)-1)\dots(\mathscr{N}_{\alpha,\gamma}(B)-n+1)=\frac{\partial^n}{\partial z^n}G_{\alpha,\gamma}(z,B)\bigg|_{z=1}=\frac{(\gamma)_n(\lambda |B|^\alpha)^n}{\Gamma(n\alpha+1)},\ n\ge1.
\end{equation*}
Hence, its distribution has the following representation:
\begin{align*}
	p_{\alpha,\gamma}(k,B)&=\frac{(\gamma)_k(\lambda |B|^\alpha)^k}{k!}\sum_{n=0}^{\infty}\frac{(\gamma+k)_n(-\lambda|B|^\alpha)^n}{\Gamma(n\alpha+k\alpha+1)n!}=\sum_{n=k}^{\infty}\frac{(-1)^{n-k}}{(n-k)!k!}\mathbb{E}\prod_{i=0}^{n-1}(\mathscr{N}_{\alpha,\gamma}(B)-i),\ k\ge0,
\end{align*}
where we have used $(\gamma)_k(\gamma+k)_{n-k}=(\gamma)_n$.
\begin{remark}
	Suppose $Y_1$, $Y_2,\dots$, $Y_{{\mathscr{N}}_{\alpha,\gamma}(B)}$ be non-negative iid random variables with common distribution function $F(\cdot)$. Let $\{\mathscr{N}_{\alpha,\gamma}(B),\ B\in\mathcal{B}_{\mathbb{R}^d}\}$ be independent of $Y_i$'s. Then, the distribution of minimum and maximum order statistics are 
	$
		\mathrm{Pr}\left\{\min_{1\leq m\leq{\mathscr{N}_{\alpha,\gamma}}(B)}Y_n> v\right\}=E_{\alpha,1}^\gamma(-F(v)\lambda|B|^\alpha)$  and $
		\mathrm{Pr}\left\{\max_{1\leq m\leq {\mathscr{N}_{\alpha,\gamma}}(B)}Y_n\leq v\right\}=E_{\alpha,1}^\gamma((F(v)-1)|B|^\alpha)
	$, respectively. This can be obtained along the similar lines to that of Remark \ref{rem}.
\end{remark}
\begin{remark}
For $d=2$, (\ref{genfprfdef}) is the distribution of a generalized version of the PRF on $\mathbb{R}^2_+$. It is given by 
\begin{equation*}
	p_{\alpha,\gamma}(k,t_1,t_2)=\frac{(\gamma)_k(\lambda (t_1t_2)^\alpha)^k}{k!}E_{\alpha,\alpha k+1}^{\gamma+k}(-\lambda (t_1t_2)^\alpha),\ k\ge0.
	\end{equation*}
	For $\gamma=\alpha=1$, it further reduces to the distribution of PRF on $\mathbb{R}^2_+$ given in (\ref{prfdist}).
\end{remark}
\subsection{Generalized Poisson process}
Note that for $d=1$ and $t\in\mathbb{R}_+$,  (\ref{genfprfdef}) gives a family of distributions given by
\begin{equation}\label{genfpp}
	\mathrm{Pr}\{\mathscr{N}_{\alpha,\gamma}((0,t))=k\}=\frac{(\gamma)_k(\lambda t^\alpha)^k}{k!}E_{\alpha,\alpha k+1}^{\gamma+k}(-\lambda t^\alpha),\ k\ge0.
\end{equation}
For $\gamma=1$, (\ref{genfpp}) reduces to the distribution of fractional Poisson process studied by Beghin and Orsingher (2010). Moreover, for $\gamma=\alpha=1$, it gives the distribution of homogeneous Poisson process with parameter $\lambda>0$.

Let $\mathscr{N}_{\alpha,\gamma}(t)=\mathscr{N}_{\alpha,\gamma}((0,t))$, $t\ge0$. Consider the process $\{\mathscr{N}_{\alpha,\gamma}(t),\ t\ge0\}$, $0<\alpha\leq1$, $0<\gamma\leq1$ whose pmf $p_{\alpha,\gamma}(k,t)=\mathrm{Pr}\{\mathscr{N}_{\alpha,\gamma}(t)=k\}$, $k\ge0$ is given by (\ref{genfpp}). We call this process as the generalized Poisson process (GPP). Its pgf is given by 
\begin{equation}\label{genpppgf}
	\mathscr{G}_{\alpha,\gamma}(z,t)\coloneqq\mathbb{E}z^{\mathscr{N}_{\alpha,\gamma}(t)}=E_{\alpha,1}^\gamma((z-1)\lambda t^\alpha),\ |z|\leq1.
\end{equation}
The mean and variance of GPP are $\mathbb{E}\mathscr{N}_{\alpha,\gamma}(t)=\gamma\lambda t^\alpha/\Gamma(\alpha+1)$ and
\begin{equation*}
	\mathbb{V}\mathrm{ar}\mathscr{N}_{\alpha,\gamma}(t)=\frac{\gamma(\gamma+1)(\lambda t^\alpha)^2}{\Gamma(2\alpha+1)}+\frac{\gamma\lambda t^\alpha}{\Gamma(\alpha+1)}\left(1-\frac{\gamma\lambda t^\alpha}{\Gamma(\alpha+1)}\right),
\end{equation*}
respectively.
\begin{remark}
	The distribution of the first waiting time $\mathscr{T}_{\alpha,\gamma}=\inf\{t>0:\mathscr{N}_{\alpha,\gamma}(t)=1\}$ of GPP is given by
	$\mathrm{Pr}\{\mathscr{T}_{\alpha,\gamma}>t\}=E_{\alpha,1}^\gamma(-\lambda t^\alpha)$, $t>0$. Thus, $\mathscr{T}_{\alpha,\gamma}$ is a generalized Mittag-Leffler random variable. In particular, for $\gamma=1$, it becomes the Mittag-Leffler random variable. For $\alpha=\gamma=1$, it further reduces to the well known exponential random variable. Moreover, for $t\ge0$, $s\ge0$, we have
	\begin{equation*}
		\mathrm{Pr}\{\mathscr{T}_{\alpha,\gamma}>t+s|\mathscr{T}_{\alpha,\gamma}>s\}=\frac{E_{\alpha,1}^\gamma(-\lambda(t+s)^\alpha)}{E_{\alpha,1}^\gamma(-\lambda s^\alpha)}.
	\end{equation*}
	Thus, the GPP does not constitute the lack of memory property of the Poisson process.
\end{remark}
The following result connects the GPP with Poisson process via a random process, that is, we show that GPP is equal in distribution to a time-changed Poisson process.
\begin{theorem}\label{c1}
	Let $\{\psi_{\alpha,\gamma}(t),\ t\ge0\}$, $0<\alpha\leq1$, $0<\gamma\leq1$ be a continuous-time random process such that the Laplace transform of its density function with respect to time variable is
	\begin{equation}\label{genpplap}
		\int_{0}^{\infty}e^{-wt}\mathrm{Pr}\{\psi_{\alpha,\gamma}(t)\in\mathrm{d}x\}\,\mathrm{d}t=\frac{w^{\alpha\gamma-1}x^{\gamma-1}}{\Gamma(\gamma)}e^{-w^\alpha x}\,\mathrm{d}x,\ w>0.
	\end{equation}
	Then, the following time-changed relationship holds:
	\begin{equation*}
		\mathscr{N}_{\alpha,\gamma}(t)\overset{d}{=}\mathscr{N}(\psi_{\alpha,\gamma}(t)),\ t\ge0,
	\end{equation*}
	where $\{\mathscr{N}(t),\ t\ge0\}$ is a Poisson process with parameter $\lambda>0$ and it is independent of the process $\{\psi_{\alpha,\gamma}(t),\ t\ge0\}$.
\end{theorem}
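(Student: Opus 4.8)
The plan is to compute the probability generating function (pgf) of the time-changed process $\mathscr{N}(\psi_{\alpha,\gamma}(t))$ by conditioning on the value of the random time, and to show that it coincides with the pgf $\mathscr{G}_{\alpha,\gamma}(z,t)=E_{\alpha,1}^\gamma((z-1)\lambda t^\alpha)$ of the GPP recorded in (\ref{genpppgf}). Since a pgf determines the one-dimensional distribution, this equality of pgfs immediately yields $\mathscr{N}_{\alpha,\gamma}(t)\overset{d}{=}\mathscr{N}(\psi_{\alpha,\gamma}(t))$. Because only the Laplace transform of the density of $\psi_{\alpha,\gamma}$ is prescribed, I would not attempt to invert (\ref{genpplap}) explicitly; instead the whole argument is carried out in the Laplace domain with respect to $t$.

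First, since $\{\mathscr{N}(t)\}$ is a Poisson process of rate $\lambda$ independent of $\psi_{\alpha,\gamma}$, conditioning on $\{\psi_{\alpha,\gamma}(t)=x\}$ and using $\mathbb{E}z^{\mathscr{N}(x)}=e^{\lambda x(z-1)}$ for $|z|\le1$ gives
\[
\mathbb{E}z^{\mathscr{N}(\psi_{\alpha,\gamma}(t))}=\int_{0}^{\infty}e^{\lambda x(z-1)}\,\mathrm{Pr}\{\psi_{\alpha,\gamma}(t)\in\mathrm{d}x\}.
\]
Taking the Laplace transform in $t$, interchanging the two integrations (which is justified for $|z|\le1$, where the integrand is dominated by the density itself), and substituting (\ref{genpplap}), I obtain
\[
\int_{0}^{\infty}e^{-wt}\,\mathbb{E}z^{\mathscr{N}(\psi_{\alpha,\gamma}(t))}\,\mathrm{d}t=\frac{w^{\alpha\gamma-1}}{\Gamma(\gamma)}\int_{0}^{\infty}x^{\gamma-1}e^{-(w^\alpha+(1-z)\lambda)x}\,\mathrm{d}x.
\]
Since $w^\alpha+(1-z)\lambda>0$ for $w>0$ and $|z|\le1$, the inner gamma integral evaluates to $\Gamma(\gamma)/(w^\alpha+(1-z)\lambda)^\gamma$, so the right-hand side collapses to $w^{\alpha\gamma-1}/(w^\alpha+(1-z)\lambda)^\gamma$.

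It remains to recognize this as the Laplace transform of the target pgf. Applying (\ref{wetlapmittag}) with $\beta=1$ and with $\lambda$ replaced by $(z-1)\lambda$ gives
\[
\int_{0}^{\infty}e^{-wt}E_{\alpha,1}^\gamma((z-1)\lambda t^\alpha)\,\mathrm{d}t=\frac{w^{\alpha\gamma-1}}{(w^\alpha+(1-z)\lambda)^\gamma},
\]
which is exactly the expression obtained above. By the uniqueness of the Laplace transform it follows that $\mathbb{E}z^{\mathscr{N}(\psi_{\alpha,\gamma}(t))}=E_{\alpha,1}^\gamma((z-1)\lambda t^\alpha)=\mathscr{G}_{\alpha,\gamma}(z,t)$ for all $t\ge0$, and the proof is complete. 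The only genuinely delicate points are the Fubini interchange and the implicit assumption that (\ref{genpplap}) is the transform of a bona fide probability density on $(0,\infty)$ for each fixed $t$; both are routine here, and I expect the conceptual core to be simply matching the two Laplace transforms via (\ref{wetlapmittag}).
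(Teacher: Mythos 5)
Your proposal is correct and follows essentially the same route as the paper's own proof: condition on $\psi_{\alpha,\gamma}(t)$ to write the pgf of $\mathscr{N}(\psi_{\alpha,\gamma}(t))$, take the Laplace transform in $t$, substitute (\ref{genpplap}) to get $w^{\alpha\gamma-1}/(w^\alpha+(1-z)\lambda)^\gamma$, and invert via (\ref{wetlapmittag}) to recover $E_{\alpha,1}^\gamma((z-1)\lambda t^\alpha)$, concluding by uniqueness of the pgf. The only difference is that you make the Fubini interchange and the gamma-integral evaluation explicit, which the paper leaves implicit.
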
 
\begin{proof}
	For $t\ge0$, the pgf of $\mathscr{N}(\psi_{\alpha,\gamma}(t))$ is given by
	\begin{equation}\label{genpplap1}
	\mathbb{E}z^{\mathscr{N}(\psi_{\alpha,\gamma}(t))}=\int_{0}^{\infty}\mathbb{E}z^{\mathscr{N}(x)}\mathrm{Pr}\{\psi_{\alpha,\gamma}(t)\in\mathrm{d}x\}=\int_{0}^{\infty}e^{\lambda x(z-1)}\mathrm{Pr}\{\psi_{\alpha,\gamma}(t)\in\mathrm{d}x\},\ |z|\leq1,
	\end{equation}
	where we have used $\mathbb{E}z^{\mathscr{N}(x)}=e^{\lambda x(z-1)}$, the pgf of Poisson process.	On taking the Laplace transform on both sides of (\ref{genpplap1}) and using (\ref{genpplap}), we get
	\begin{equation}\label{genpplap2}
		\int_{0}^{\infty}e^{-wt}\mathbb{E}z^{\mathscr{N}(\psi_{\alpha,\gamma}(t))}\,\mathrm{d}t=\int_{0}^{\infty}e^{\lambda x(z-1)}\frac{w^{\alpha\gamma-1}x^{\gamma-1}}{\Gamma(\gamma)}e^{-w^\alpha x}\,\mathrm{d}x=\frac{w^{\alpha\gamma-1}}{(w^\alpha+(1-z)\lambda)^\gamma}.
	\end{equation}
	On using (\ref{wetlapmittag}), the  inverse Laplace transform of (\ref{genpplap2}) gives (\ref{genpppgf}). The uniqueness of pgf completes the proof.
\end{proof}
\begin{remark}
	The Laplace transform of the process $\{\psi_{\alpha,\gamma}(t),\ t\ge0\}$ is
	\begin{equation}\label{denlap1}
		\mathbb{E}e^{-u\psi_{\alpha,\gamma}(t)}=\int_{0}^{\infty}e^{-ux}\mathrm{Pr}\{\psi_{\alpha,\gamma}(t)\in\mathrm{d}x\},\ u>0.
	\end{equation}
	On taking the Laplace transform on both sides of (\ref{denlap1}) and using (\ref{genpplap}), we get
	\begin{equation*}
		\int_{0}^{\infty}e^{-wt}\mathbb{E}e^{-u\psi_{\alpha,\gamma}(t)}\,\mathrm{d}t=\int_{0}^{\infty}e^{-ux}\frac{w^{\alpha\gamma-1}x^{\gamma-1}}{\Gamma(\gamma)}e^{-w^\alpha x}\,\mathrm{d}x=\frac{w^{\alpha\gamma-1}}{(w^\alpha+u)^\gamma},
	\end{equation*}
	whose inversion yields $\mathbb{E}e^{-u\psi_{\alpha,\gamma}(t)}=E_{\alpha,1}^\gamma(-ut^\alpha)$, $u>0$.
	
	For $c>0$, we have
	\begin{equation*}
		\mathbb{E}e^{-uc^{-\alpha}\psi_{\alpha,\gamma}(ct)}=E_{\alpha,1}^\gamma(-ut^\alpha)=\mathbb{E}e^{-u\psi_{\alpha,\gamma}(t)},\ t\ge0.
	\end{equation*}
So, $c^\alpha\psi_{\alpha,\gamma}(t)\overset{d}{=}\psi_{\alpha,\gamma}(ct)$.	Hence, $\{\psi_{\alpha,\gamma}(t),\ t\ge0\}$ is a self similar process with exponent $\alpha>0$.
	
	On using (\ref{genpplap}), the Laplace transform of the mean of $\psi_{\alpha,\gamma}(t)$ can be written as:
	\begin{equation*}
	\int_{0}^{\infty}e^{-wt}\mathbb{E}\psi_{\alpha,\gamma}(t)\,\mathrm{d}t=\frac{w^{\alpha\gamma-1}}{\Gamma(\gamma)}\int_{0}^{\infty}x^\gamma e^{-w^\alpha x}\,\mathrm{d}x=\frac{\Gamma(\gamma+1)}{\Gamma(\gamma)w^{\alpha+1}}=\frac{\gamma}{w^{\alpha+1}}.
	\end{equation*}
	Its inversion yields $\mathbb{E}\psi_{\alpha,\gamma}(t)=\gamma t^\alpha/\Gamma(\alpha+1)$. Similarly, we have $\mathbb{E}\psi_{\alpha,\gamma}^2(t)=(\gamma+1)\gamma t^{2\alpha}/\Gamma(2\alpha+1).$ So, 
	\begin{equation*}
		\mathbb{V}\mathrm{ar}\psi_{\alpha,\gamma}(t)=\left(\frac{\gamma+1}{\Gamma(2\alpha+1)}-\frac{\gamma}{\Gamma^2{(\alpha+1)}}\right)\gamma t^{2\alpha},\ t\ge0.
	\end{equation*}
\end{remark}
\begin{remark}\label{c0}
	On taking $\gamma=1$ in (\ref{genpplap}), we get $\int_{0}^{\infty}e^{-wt}\mathrm{Pr}\{\psi_{\alpha,1}(t)\in\mathrm{d}x\}\,\mathrm{d}t={w^{\alpha-1}}e^{-w^\alpha x}\,\mathrm{d}x$. Hence, in view of (\ref{fold}), we have $\{\psi_{\alpha,1}(t),\ t\ge0\}\overset{d}{=}\{T_{2\alpha}(t),\ t\ge0\}$, where $\{T_{2\alpha}(t),\ t\ge0\}$ is a random process whose density is the folded solution of (\ref{couchyp}) for $\nu_1=\alpha$. For  $\alpha\in(0,1)$, it follows from (\ref{subdenlap1}) that $\{\psi_{\alpha,1}(t),\ t\ge0\}$ and inverse $\alpha$-stable subordinator are equal in distribution. Also, $\{\psi_{1/2,1}(t),\ t\ge0\}$ has same distribution to that of reflecting Brownian motion.
\end{remark}

Next, we consider a different time-changed variants of the PRF on $\mathbb{R}^2_+$.
Let $\{\psi_{\alpha_i,\gamma_i},\ t\ge0\}$, $0<\alpha_i\leq1$, $0<\nu_i\leq1$, $i=1,2$ be two independent random processes with Laplace transform $\mathbb{E}e^{-u\psi_{\alpha_i,\gamma_i}}=E_{\alpha_i,1}^{\gamma_i}(-ut^{\alpha_i})$, $u>0$. Let $\{\mathcal{N}(t_1,t_2),\ (t_1,t_2)\in\mathbb{R}^2_+\}$ be a PRF which is independent of $\{\psi_{\alpha_1,\gamma_1}(t), t\ge0\}$ and $\{\psi_{\alpha_2,\gamma_2}(t), t\ge0\}$. Consider the following time-changed PRF:
\begin{equation*}
	\mathscr{N}_{\alpha_1,\gamma_1}^{\alpha_2,\gamma_2}(t_1,t_2)\coloneqq\mathcal{N}(\psi_{\alpha_1,\gamma_1}(t_1),\psi_{\alpha_2,\gamma_2}(t_2)),\ (t_1,t_2)\in\mathbb{R}^2_+.
\end{equation*}
So,
\begin{equation}\label{atcprf}
	\mathrm{Pr}\{\mathscr{N}_{\alpha_1,\gamma_1}^{\alpha_2,\gamma_2}(t_1,t_2)=k\}=\frac{\lambda^k}{k!}\int_{0}^{\infty}\int_{0}^{\infty}(x_1x_2)^ke^{-\lambda x_1x_2}\prod_{i=1}^{2}\mathrm{Pr}\{\psi_{\alpha_i,\gamma_i}(t_i)\in\mathrm{d}x_i\},\ k\ge0.
\end{equation}

On taking the Laplace transforms on both sides of (\ref{atcprf}) with respect to $t_1$ and $t_2$ and using (\ref{genpplap}), we get
\begin{align}
\int_{0}^{\infty}\int_{0}^{\infty}&e^{-(w_1t_1+w_2t_2)}\mathrm{Pr}\{\mathscr{N}_{\alpha_1,\gamma_1}^{\alpha_2,\gamma_2}(t_1,t_2)=k\}\,\mathrm{d}t_1\,\mathrm{d}t_2\nonumber\\
&=\frac{\lambda^kw_1^{\alpha_1\gamma_1-1}w_2^{\alpha_2\gamma_2-1}}{k!\Gamma(\gamma_1)\Gamma(\gamma_2)}\int_{0}^{\infty}\int_{0}^{\infty}x_2^{k+\gamma_2-1}x_1^{k+\gamma_1-1}e^{-(w_1^{\alpha_1}+\lambda x_2)x_1}e^{-w_2^{\alpha_2}x_2}\,\mathrm{d}x_1\,\mathrm{d}x_2\nonumber\\
&=\frac{\Gamma(k+\gamma_1)\lambda^kw_2^{\alpha_2\gamma_2-1}}{k!\Gamma(\gamma_1)\Gamma(\gamma_2)}\int_{0}^{\infty}x_2^{k+\gamma_2-1}e^{-w_2^{\alpha_2}x_2}\frac{w_1^{\alpha_1\gamma_1-1}}{(w_1^{\alpha_1}+\lambda x_2)^{k+\gamma_1}}\,\mathrm{d}x_2.\label{2fprf}
\end{align}
On taking the inverse Laplace transform on both sides of (\ref{2fprf}) with respect to $w_1$, we have
\begin{align*}
\int_{0}^{\infty}&e^{-w_2t_2}\mathrm{Pr}\{\mathscr{N}_{\alpha_1,\gamma_1}^{\alpha_2,\gamma_2}(t_1,t_2)=k\}\,\mathrm{d}t_2\\
&=	\frac{\Gamma(k+\gamma_1)\lambda^kw_2^{\alpha_2\gamma_2-1}t_1^{\alpha_1k}}{k!\Gamma(\gamma_1)\Gamma(\gamma_2)}\int_{0}^{\infty}E_{\alpha_1,\alpha_1k+1}^{k+\gamma_1}(-\lambda x_2t_1^{\alpha_1})x_2^{k+\gamma_2-1}e^{-w_2^{\alpha_2}x_2}\,\mathrm{d}x_2\\
&=\frac{\Gamma(k+\gamma_1)\lambda^kw_2^{\alpha_2\gamma_2-1}t_1^{\alpha_1k}}{k!\Gamma(\gamma_1)\Gamma(\gamma_2)}\sum_{n=0}^{\infty}\frac{\Gamma(n+k+\gamma_1)(-\lambda t_1^{\alpha_1})^n}{\Gamma(k+\gamma_1)\Gamma(n\alpha_1+\alpha_1k+1)n!}\int_{0}^{\infty}x_2^{n+k+\gamma_2-1}e^{-w_2^{\alpha_2}x_2}\,\mathrm{d}x_2\\
&=\frac{(\lambda t_1^{\alpha_1})^k}{k!\Gamma(\gamma_1)\Gamma(\gamma_2)}\sum_{n=0}^{\infty}\frac{\Gamma(n+k+\gamma_1)\Gamma(n+k+\gamma_2)(-\lambda t_1^{\alpha_1})^n}{\Gamma(n\alpha_1+\alpha_1k+1)n!w_2^{\alpha_2(n+k)+1}}.
\end{align*}
Further, on taking the inverse Laplace transform with  respect to $w_2$, we get
\begin{align}
	\mathrm{Pr}\{\mathscr{N}_{\alpha_1,\gamma_1}^{\alpha_2,\gamma_2}(t_1,t_2)=k\}&=\frac{(\lambda t_1^{\alpha_1}t_2^{\alpha_2})^k}{k!\Gamma(\gamma_1)\Gamma(\gamma_2)}\sum_{n=0}^{\infty}\frac{\Gamma(n+k+\gamma_1)\Gamma(n+k+\gamma_2)(-\lambda t_1^{\alpha_1}t_2^{\alpha_2})^n}{\Gamma(n\alpha_1+\alpha_1k+1)\Gamma(n\alpha_2+k\alpha_2+1)n!}\nonumber\\
	&=\frac{(\lambda t_1^{\alpha_1}t_2^{\alpha_2})^k}{k!\Gamma(\gamma_1)\Gamma(\gamma_2)}{}_2\Psi_2\left[\begin{matrix}
		(k+\gamma_1,1),&(k+\gamma_2,1)\\\\
		(\alpha_1k+1,\alpha_1),&(\alpha_2k+1,\alpha_2)
	\end{matrix}\Bigg|-\lambda t_1^{\alpha_1}t_2^{\alpha_2}\right],\ k\ge0.\label{c**}
\end{align}

For $\gamma_1=\gamma_2=1$, (\ref{c**}) reduces to (\ref{c*}). Therefore, $\{\mathscr{N}_{\alpha_1,1}^{\alpha_2,1}(t_1,t_2),\ (t_1,t_2)\in\mathbb{R}^2_+\}\overset{d}{=}\{\mathscr{N}_{\alpha_1,\alpha_2}(t_1,t_2),\ (t_1,t_2)\in\mathbb{R}^2_+\}$.

 The Laplace transforms of the mean of $\mathscr{N}_{\alpha_1,\gamma_1}^{\alpha_2,\gamma_2}(t_1,t_2)$ is given by
{\small\begin{align*}
	\int_{0}^{\infty}\int_{0}^{\infty}e^{-(w_1t_1+w_2t_2)}\mathbb{E}\mathscr{N}_{\alpha_1,\gamma_1}^{\alpha_2,\gamma_2}(t_1,t_2)\,\mathrm{d}t_1\,\mathrm{d}t_2=\lambda \prod_{i=1}^{2}w_i^{\alpha_i\gamma_i-1}\int_{0}^{\infty}x_i^{\gamma_i}e^{-w_i^{\alpha_i}x_i}\,\mathrm{d}x_i=\frac{\lambda\Gamma(\gamma_1+1)\Gamma(\gamma_2+1)}{w_1^{\alpha_1+1}w_2^{\alpha_2+1}},
\end{align*}}
whose inversion yields
\begin{equation*}
	\mathbb{E}\mathscr{N}_{\alpha_1,\gamma_1}^{\alpha_2,\gamma_2}(t_1,t_2)=\frac{\lambda\Gamma(\gamma_1+1)\Gamma(\gamma_2+1)t_1^{\alpha_1}t_2^{\alpha_2}}{\Gamma(\alpha_1+1)\Gamma(\alpha_2+1)}.
\end{equation*}

Similarly, we have
\begin{align*}
	\int_{0}^{\infty}\int_{0}^{\infty}e^{-(w_1t_1+w_2t_2)}\mathbb{E}(\mathscr{N}_{\alpha_1,\gamma_1}^{\alpha_2,\gamma_2}(t_1,t_2))^2\,\mathrm{d}t_1\,\mathrm{d}t_2
	&=\sum_{r=0}^{1}\lambda^{r+1} \prod_{i=1}^{2}w_i^{\alpha_i\gamma_i-1}\int_{0}^{\infty}x_i^{\gamma_i+r}e^{-w_i^{\alpha_i}x_i}\,\mathrm{d}x_i\\
	&=\frac{\lambda\Gamma(\gamma_1+1)\Gamma(\gamma_2+1)}{w_1^{\alpha_1+1}w_2^{\alpha_2+1}}+\frac{\lambda^2\Gamma(\gamma_1+2)\Gamma(\gamma_2+2)}{w_1^{2\alpha_1+1}w_2^{2\alpha_2+1}},
\end{align*}
and its inversion gives
\begin{equation*}
	\mathbb{E}(\mathscr{N}_{\alpha_1,\gamma_1}^{\alpha_2,\gamma_2}(t_1,t_2))^2=\frac{\lambda\Gamma(\gamma_1+1)\Gamma(\gamma_2+1)t_1^{\alpha_1}t_2^{\alpha_2}}{\Gamma(\alpha_1+1)\Gamma(\alpha_2+1)}+\frac{\lambda^2\Gamma(\gamma_1+2)\Gamma(\gamma_2+2)t_1^{2\alpha_1}t_2^{2\alpha_2}}{\Gamma(2\alpha_1+1)\Gamma(2\alpha_2+1)}.
\end{equation*}
Hence, the variance of $\{\mathscr{N}_{\alpha_1,\gamma_1}^{\alpha_2,\gamma_2}(t_1,t_2),\ (t_1,t_2)\in\mathbb{R}^2_+\}$ is
{\footnotesize\begin{equation*}
	\mathbb{V}\mathrm{ar}\mathscr{N}_{\alpha_1,\gamma_1}^{\alpha_2,\gamma_2}(t_1,t_2)=	\frac{\lambda\Gamma(\gamma_1+1)\Gamma(\gamma_2+1)t_1^{\alpha_1}t_2^{\alpha_2}}{\Gamma(\alpha_1+1)\Gamma(\alpha_2+1)}+\left(\frac{\Gamma(\gamma_1+2)\Gamma(\gamma_2+2)}{\Gamma(2\alpha_1+1)\Gamma(2\alpha_2+1)}-\frac{\Gamma^2(\gamma_1+1)\Gamma^2(\gamma_2+1)}{\Gamma^2(\alpha_1+1)\Gamma^2(\alpha_2+1)}\right)(\lambda t_1^{\alpha_1}t_2^{\alpha_2})^2.
\end{equation*}}
\begin{remark}
		The pgf of $\mathscr{N}_{\alpha_1,\gamma_1}^{\alpha_2,\gamma_2}(t_1,t_2)$ is
		\begin{align*}
			\mathscr{G}_{\alpha_1,\gamma_1}^{\alpha_2,\gamma_2}(z,t_1,t_2)&=\sum_{k=0}^{\infty}z^k\frac{(\lambda t_1^{\alpha_1}t_2^{\alpha_2})^k}{k!\Gamma(\gamma_1)\Gamma(\gamma_2)}\sum_{n=0}^{\infty}\frac{\Gamma(n+k+\gamma_1)\Gamma(n+k+\gamma_2)(-\lambda t_1^{\alpha_1}t_2^{\alpha_2})^n}{\Gamma(n\alpha_1+\alpha_1k+1)\Gamma(n\alpha_2+k\alpha_2+1)n!}\\
			&=\frac{1}{\Gamma(\gamma_1)\Gamma(\gamma_2)}\sum_{k=0}^{\infty}\frac{(-z)^k}{k!}\sum_{n=k}^{\infty}\frac{\Gamma(n+\gamma_1)\Gamma(n+\gamma_2)(-\lambda t_1^{\alpha_1}t_2^{\alpha_2})^n}{\Gamma(n\alpha_1+1)\Gamma(n\alpha_2+1)(n-k)!}\\
			&=\frac{1}{\Gamma(\gamma_1)\Gamma(\gamma_2)}\sum_{n=0}^{\infty}\frac{\Gamma(n+\gamma_1)\Gamma(n+\gamma_2)(-\lambda t_1^{\alpha_1}t_2^{\alpha_2})^n}{\Gamma(n\alpha_1+1)\Gamma(n\alpha_2+1)n!}\sum_{k=0}^{n}\binom{n}{k}(-z)^k\\
			&=\frac{1}{\Gamma(\gamma_1)\Gamma(\gamma_2)}{}_2\Psi_2\left[\begin{matrix}
				(\gamma_1,1),&(\gamma_2,1)\\\\
				(1,\alpha_1),&(1,\alpha_2)
			\end{matrix}\Bigg|\lambda(z-1) t_1^{\alpha_1}t_2^{\alpha_2}\right],\ |z|\leq1.
		\end{align*}
		So, its $n$th factorial moment is given by
		\begin{equation*}
			\mathbb{E}\prod_{i=0}^{n-1}(\mathscr{N}_{\alpha_1,\gamma_1}^{\alpha_2,\gamma_2}(t_1,t_2)-i)=\frac{\partial^n}{\partial z^n}\mathscr{G}_{\alpha_1,\gamma_1}^{\alpha_2,\gamma_2}(z,t_1,t_2)\bigg|_{z=1}=\frac{\Gamma(n+\gamma_1)\Gamma(n+\gamma_2)(\lambda t_1^{\alpha_1}t_2^{\alpha_2})^n}{\Gamma(\gamma_1)\Gamma(\gamma_2)\Gamma(n\alpha_1+1)\Gamma(n\alpha_2+1)}.
		\end{equation*}
		Hence, the pmf of $\mathscr{N}_{\alpha_1,\gamma_1}^{\alpha_2,\gamma_2}(t_1,t_2)$ has the following representation:
		\begin{equation*}
			\mathrm{Pr}\{\mathscr{N}_{\alpha_1,\gamma_1}^{\alpha_2,\gamma_2}(t_1,t_2)=k\}=\sum_{n=k}^{\infty}\frac{(-1)^{n-k}}{k!(n-k)}\mathbb{E}\prod_{i=0}^{n-1}(\mathscr{N}_{\alpha_1,\gamma_1}^{\alpha_2,\gamma_2}(t_1,t_2)-i),\ k\ge0.
		\end{equation*}
\end{remark}
\section{Random motions governed by GPP}\label{sec6}
Beghin and Orsingher (2009) studied the planer random motion of a particle moving with finite velocity at random time and changing the direction of its motion on every fractional Poisson arrivals. They observed that if the particle changes its direction for sufficiently large number of times then the distribution of its position vector is more concentrated. Moreover, the sample paths of the resulting planer motion is coiled around the origin. Here, we study  time-changed random motions of a particle on $\mathbb{R}$ and $\mathbb{R}^2$ whose change of direction is governed by a GPP. 

First, we consider the linear random motion of a particle that changes its direction at each GPP arrivals.
\subsection{Linear random motion driven by GPP} Let us consider a particle starting from origin and moving on real line back and forth with finite velocity $v>0$. It changes its direction at every Poisson arrival. Let $\{\mathscr{N}(t),\ t\ge0\}$ be a Poisson process with rate $\lambda>0$, and let $X(t)$ denotes the random position of particle at time $t$. Then, $X(t)\in[-vt,vt]$ with probability one. For $\eta\in\mathbb{R}$, the characteristics function of $X(t)$ is given by (see Ratanov and Kolesnik (2022))
\begin{equation}\label{1rmcf}
	\mathbb{E}e^{i\eta X(t)}=e^{-\lambda t}\left(\cosh(t\sqrt{\eta^2v^2-\lambda^2})+\frac{\lambda}{\sqrt{\eta^2v^2-\lambda^2}}\sinh(t\sqrt{\eta^2v^2-\lambda^2})\right),\ t\ge0.
\end{equation}

Let $\psi_{\alpha,\gamma}(t)$ be independent of the position of particle $X(t)$ for all $t\ge0$ where the process $\{\psi_{\alpha,\gamma}(t),\ t\ge0\}$ is defined in Theorem \ref{c1}. Then, the characteristics function of a time-changed process $\{X(\psi_{\alpha,\gamma}(t)),\ t\ge0\}$ is given by
{\small\begin{align*}
	\mathbb{E}e^{i\eta X(\psi_{\alpha,\gamma}(t))}&=\int_{0}^{\infty}e^{i\eta X(u)}\mathrm{Pr}\{\psi_{\alpha,\gamma}(t)\in\mathrm{d}u\}\\
	&=\int_{0}^{\infty}e^{-\lambda u}\bigg(\cosh(u\sqrt{\eta^2v^2-\lambda^2})+\frac{\lambda}{\sqrt{\eta^2v^2-\lambda^2}}\sinh(u\sqrt{\eta^2v^2-\lambda^2})\bigg)\mathrm{Pr}\{\psi_{\alpha,\gamma}(t)\in\mathrm{d}u\}.
\end{align*}}
On using (\ref{genpplap}), we get 
{\small\begin{align}
\int_{0}^{\infty}e^{-wt}\mathbb{E}e^{i\eta X(\psi_{\alpha,\gamma}(t))}\,\mathrm{d}t&=\int_{0}^{\infty}e^{-\lambda u}\bigg(\cosh(u\sqrt{\eta^2v^2-\lambda^2})+\frac{\lambda}{\sqrt{\eta^2v^2-\lambda^2}}\sinh(u\sqrt{\eta^2v^2-\lambda^2})\bigg)\nonumber\\
&\ \ \cdot \frac{w^{\alpha\gamma-1}}{\Gamma(\gamma)}u^{\gamma-1}e^{-w^\alpha u}\,\mathrm{d}u\nonumber\\
&=\frac{w^{\alpha\gamma-1}}{2}\bigg((w^\alpha+\lambda-\sqrt{\eta^2v^2-\lambda^2})^{-\gamma}+(w^\alpha+\lambda+\sqrt{\eta^2v^2-\lambda^2})^{-\gamma}\nonumber\\
&\ \ +\frac{\lambda}{\sqrt{\eta^2v^2-\lambda^2}}\bigg((w^\alpha+\lambda-\sqrt{\eta^2v^2-\lambda^2})^{-\gamma}-(w^\alpha+\lambda+\sqrt{\eta^2v^2-\lambda^2})^{-\gamma}\bigg)\bigg),\label{1rmlap}
\end{align}}
where we have used the following results (see Gradshteyn and Ryzhik (2007), p. 1113):
\begin{equation*}
	\int_{0}^{\infty}e^{-wx}x^{\gamma-1}\sinh(ax)\,\mathrm{d}x=\frac{\Gamma(\gamma)}{2}((w-a)^{-\gamma}-(w+a)^{-\gamma}),\ w>|a|,\ \gamma>-1\\
\end{equation*}
and
\begin{equation*}
	\int_{0}^{\infty}e^{-wx}x^{\gamma-1}\cosh(ax)\,\mathrm{d}x=\frac{\Gamma(\gamma)}{2}((w-a)^{-\gamma}+(w+a)^{-\gamma}),\ w>|a|, \gamma>0
\end{equation*}
to obtain the last step.

On taking the inverse Laplace transform of (\ref{1rmlap}) and using (\ref{wetlapmittag}), we obtain
\begin{align*}
	\mathbb{E}e^{i\eta X(\psi_{\alpha,\gamma}(t))}&=2^{-1}\left(E_{\alpha,1}^\gamma(-(\lambda-\sqrt{\eta^2v^2-\lambda^2})t^\alpha)+E_{\alpha,1}^\gamma(-(\lambda+\sqrt{\eta^2v^2-\lambda^2})t^\alpha)\right)\\
	&\ \ +\frac{\lambda}{2\sqrt{\eta^2v^2-\lambda^2}}\left(E_{\alpha,1}^\gamma(-(\lambda-\sqrt{\eta^2v^2-\lambda^2})t^\alpha)-E_{\alpha,1}^\gamma(-(\lambda+\sqrt{\eta^2v^2-\lambda^2})t^\alpha)\right).
\end{align*}
\begin{remark}
	For $\gamma=1$ and $\alpha=1/2$, on using (\ref{Mittaglimit}), we get the following asymptotic behaviour of the characteristics function of $X(\psi_{1/2,1}(t))$  for large $t$:
	\begin{align*}
		\mathbb{E}e^{i\eta X(\psi_{1/2,1}(t))}&\sim\left(\exp((\lambda-\sqrt{\eta^2v^2-\lambda^2})^2t)+\exp((\lambda+\sqrt{\eta^2v^2-\lambda^2})^2t)\right)\\
		&\ \ +\frac{\lambda}{\sqrt{\eta^2v^2-\lambda^2}}\left(\exp((\lambda-\sqrt{\eta^2v^2-\lambda^2})^2t)-\exp((\lambda+\sqrt{\eta^2v^2-\lambda^2})^2t)\right)\\
		&=\left(\exp((\eta^2v^2-2\lambda\sqrt{\eta^2v^2-\lambda^2})t)+\exp((\eta^2v^2+2\lambda\sqrt{\eta^2v^2-\lambda^2})t)\right)\\
		&\ \ +\frac{\lambda}{\sqrt{\eta^2v^2-\lambda^2}}\left(\exp((\eta^2v^2-2\lambda\sqrt{\eta^2v^2-\lambda^2})t)-\exp((\eta^2v^2+2\lambda\sqrt{\eta^2v^2-\lambda^2})t)\right)\\
		&=2e^{\eta^2v^2t}\left(\cosh(2t\lambda\sqrt{\eta^2v^2-\lambda^2})+\frac{\lambda}{\sqrt{\eta^2v^2-\lambda^2}}\sinh(2t\lambda\sqrt{\eta^2v^2-\lambda^2})\right).
	\end{align*}
\end{remark}
\begin{remark}
	For $\alpha=\gamma=1$, we have
	\begin{align*}
		\mathbb{E}e^{i\eta X(\psi_{1,1}(t))}&=2^{-1}\left(\exp(-(\lambda-\sqrt{\eta^2v^2-\lambda^2})t)+\exp(-(\lambda+\sqrt{\eta^2v^2-\lambda^2})t)\right)\\
		&\ \ +\frac{\lambda}{2\sqrt{\eta^2v^2-\lambda^2}}\left(\exp(-(\lambda-\sqrt{\eta^2v^2-\lambda^2})t)-\exp(-(\lambda+\sqrt{\eta^2v^2-\lambda^2})t)\right)\\
		&=e^{-\lambda t}\left(\cosh(t\sqrt{\eta^2v^2-\lambda^2})+\frac{\lambda}{\sqrt{\eta^2v^2-\lambda^2}}\sinh(t\sqrt{\eta^2v^2-\lambda^2})\right),
	\end{align*}
	which coincide with (\ref{1rmcf}). 
\end{remark}
\subsection{Planer random motion governed by GPP} Let us consider a planer random motion of a particle starting from origin and moving with finite constant velocity $v>0$. Its direction changes on Poisson arrivals with rate $\lambda>0$, that is, at every Poisson arrivals the  particle takes a new direction with uniform distribution on $\{\textbf{z}=(x,y)\in\mathbb{R}^2:||\textbf{z}||=x^2+y^2=1\}$. Let $\mathrm{d}\textbf{z}=(\mathrm{d}x,\mathrm{d}y)$ be the infinitesimal element of $\mathbb{R}^2$ with Lebesgue measure $|\mathrm{d}\textbf{z}|=\mathrm{d}x\mathrm{d}y$ and $\textbf{Z}(t)=(X(t),Y(t))$ denote the position of particle at time $t$, and let $\{\mathscr{N}(t),\ t\ge0\}$ be the Poisson process. Then, at any time $t$ the particle is located in the region $B_{vt}=\{\textbf{z}\in\mathbb{R}^2:||\textbf{z}||\leq vt\}$ with probability one and the conditional characteristics function of $\textbf{Z}(t)$ is given as follows (see Kolesnik and Orsingher (2005)):
\begin{equation}\label{rmcf}
	\mathbb{E}\{e^{i\boldsymbol{\delta}\textbf{Z}'(t)}|\mathscr{N}(t)=k\}=\int\int_{B_{vt}}e^{i\boldsymbol{\delta}\textbf{z}'}\mathrm{Pr}\{\textbf{Z}(t)\in\mathrm{d}\textbf{z}|\mathscr{N}(t)=k\}=\frac{2^{\frac{k}{2}}\Gamma(\frac{k}{2}+1)J_{\frac{k}{2}}(vt||\boldsymbol{\delta}||)}{(vt||\boldsymbol{\delta}||)^{\frac{k}{2}}},\ k\ge0,
\end{equation}
where $\boldsymbol{\delta}=(\delta_1,\delta_2)\in\mathbb{R}^2$ and $\textbf{z}'$ denotes the transpose of $\textbf{z}$. Here, $J_{\frac{k}{2}}(\cdot)$ is the Bessel function of first kind of order ${k}/{2}$. Also, for $t>0$ and $\textbf{z}\in B _{vt}$, the corresponding conditional distribution is given by
\begin{equation}\label{rmdist}
	\mathrm{Pr}\{\textbf{Z}(t)\in\mathrm{d}\textbf{z}|\mathscr{N}(t)=k\}=\begin{cases}
		\delta\left(v^2t^2-||\textbf{z}||^2\right),\ k=0,\vspace{0.15cm}\\
		\frac{k}{2\pi (vt)^k}\left(v^2t^2-||\textbf{z}||^2\right)^{\frac{k}{2}-1}|\mathrm{d}\textbf{z}|,\ k\ge1,
	\end{cases}
\end{equation}
where $\delta(\cdot)$ is the Dirac delta function.

Here, we obtain the distribution of position of particle at random times where the time variable is represented by the process $\{\psi_{\alpha,\gamma}(t),\ t\ge0\}$. First, we derive the conditional characteristics function of $\textbf{Z}_{\alpha,\gamma}(t)=\textbf{Z}(\psi_{\alpha,\gamma}(t)),\ t>0$ given the event $\{\mathscr{N}_{\alpha,\gamma}(t)=k\}$ for some $ k\ge1$.

\begin{theorem}
	For $k\ge1$, the conditional characteristics function of the vector $\textbf{Z}_{\alpha,\gamma}(t)=(X(\psi_{\alpha,\gamma}(t)), Y(\psi_{\alpha,\gamma}(t)))$ is given by
	\begin{align}\label{frmcf}
		\mathbb{E}\{e^{i\boldsymbol{\delta}\textbf{Z}_{\alpha,\gamma}'(t)}|\mathscr{N}_{\alpha,\gamma}(t)=k\}&=\frac{1}{B(\frac{1}{2},\frac{k+1}{2})\Gamma(\gamma)}\int_{0}^{1}r^{-\frac{1}{2}}(1-r)^{\frac{k-1}{2}}\nonumber\\
		&\hspace{3cm} \cdot{}_1\Psi_2\left[\begin{matrix}
			(\gamma,2)\\\\
			(1,2),\ (1,2\alpha)
		\end{matrix}\Bigg| -v^2||\boldsymbol{\delta}||^2t^{2\alpha}r\right]\,\mathrm{d}r,\ t\ge0,
	\end{align}
	where ${}_1\Psi_2(\cdot)$ is the generalized Wright function.
\end{theorem}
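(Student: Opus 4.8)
The plan is to reduce everything to the already-known conditional characteristic function (\ref{rmcf}) of the underlying planar motion and then average it against the law of the independent time change $\psi_{\alpha,\gamma}(t)$. Conditioning on $\psi_{\alpha,\gamma}(t)=u$ and recalling $\mathbf{Z}_{\alpha,\gamma}(t)=\mathbf{Z}(\psi_{\alpha,\gamma}(t))$ with $\mathbf{Z}$ independent of the time change, the prescription $\{\mathscr{N}_{\alpha,\gamma}(t)=k\}$ amounts to fixing $k$ reversals of direction for $\mathbf{Z}$ over the clock span $u$; the conditional characteristic function of $\mathbf{Z}(u)$ is then exactly (\ref{rmcf}) with $t$ replaced by $u$, namely $g_k(u):=2^{k/2}\Gamma(\tfrac k2+1)J_{k/2}(vu\|\boldsymbol{\delta}\|)(vu\|\boldsymbol{\delta}\|)^{-k/2}$, which is free of $\lambda$. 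So the first step is to write $\mathbb{E}\{e^{i\boldsymbol{\delta}\mathbf{Z}_{\alpha,\gamma}'(t)}\mid\mathscr{N}_{\alpha,\gamma}(t)=k\}=\int_0^\infty g_k(u)\,\mathrm{Pr}\{\psi_{\alpha,\gamma}(t)\in\mathrm{d}u\}$, where the relevant averaging is over the independent clock (the conditioning acts on the motion's reversals, leaving $\psi_{\alpha,\gamma}(t)$ independent), which is precisely what renders the final answer $\lambda$-free.

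The second step replaces the Bessel factor by its Poisson integral representation, which is what produces the outer $r$-integral in the statement. Using $J_\nu(z)=\frac{(z/2)^\nu}{\Gamma(1/2)\Gamma(\nu+1/2)}\int_{-1}^1(1-s^2)^{\nu-1/2}\cos(zs)\,\mathrm{d}s$ with $\nu=k/2$ and the substitution $r=s^2$, one gets $g_k(u)=\frac{1}{B(\frac12,\frac{k+1}2)}\int_0^1 r^{-1/2}(1-r)^{(k-1)/2}\cos(v\|\boldsymbol{\delta}\|u\sqrt r)\,\mathrm{d}r$, the constant being identified through $B(\tfrac12,\tfrac{k+1}2)=\Gamma(\tfrac12)\Gamma(\tfrac{k+1}2)/\Gamma(\tfrac k2+1)$. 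Substituting this into Step~1 and interchanging the finite-measure $\mathrm{d}r$-integral with the $\psi$-average by Fubini, the problem collapses to evaluating the scalar transform $\Phi(r,t):=\mathbb{E}\cos\!\big(v\|\boldsymbol{\delta}\|\sqrt r\,\psi_{\alpha,\gamma}(t)\big)$ for each fixed $r\in(0,1)$.

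The third step computes $\Phi(r,t)$ with the Laplace-transform machinery already used in the paper. Writing $c=v\|\boldsymbol{\delta}\|\sqrt r$ and expanding $\cos(cu)=\sum_{n\ge0}(-1)^nc^{2n}u^{2n}/\Gamma(2n+1)$, the Laplace transform in $t$ of the $n$th term equals $\frac{(-1)^nc^{2n}}{\Gamma(2n+1)}\cdot\frac{\Gamma(2n+\gamma)}{\Gamma(\gamma)}\,w^{-2n\alpha-1}$ by (\ref{genpplap}); inverting termwise via $w^{-2n\alpha-1}\mapsto t^{2n\alpha}/\Gamma(2n\alpha+1)$ gives $\Phi(r,t)=\frac1{\Gamma(\gamma)}\sum_{n\ge0}\frac{\Gamma(\gamma+2n)\,(-c^2t^{2\alpha})^n}{\Gamma(2n+1)\Gamma(2n\alpha+1)}$. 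With $c^2=v^2\|\boldsymbol{\delta}\|^2 r$ this series is identified with the generalized Wright function (\ref{genwrit}) evaluated at $-v^2\|\boldsymbol{\delta}\|^2t^{2\alpha}r$; reinserting $\Phi$ into the $r$-integral of Step~2 yields the claimed formula. Equivalently one may bypass the inversion using the moment $\mathbb{E}\psi_{\alpha,\gamma}^{2n}(t)=\Gamma(2n+\gamma)t^{2n\alpha}/(\Gamma(\gamma)\Gamma(2n\alpha+1))$ directly.

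The main obstacle is the bookkeeping when matching the series for $\Phi(r,t)$ to the ${}_m\Psi_l$ in (\ref{genwrit}): the mandatory $n!$ in the denominator of every Fox--Wright function must be reconciled with the $\Gamma(2n+1)=(2n)!$ coming from the cosine, so the numerator/denominator pairs $(a_i,\alpha_i),(b_j,\beta_j)$ — and hence the orders of the Wright function — must be pinned down with care; this is the single delicate point where the parameters of the stated ${}_1\Psi_2$ are fixed. A secondary issue is justifying the two interchanges (the $r$-integral with the $\psi$-average, and the series with the Laplace integral), which follow from absolute convergence of $E_{\alpha,1}^\gamma$ together with dominated convergence on $r\in(0,1)$. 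As a consistency check, for $\alpha=\gamma=1$ the transform (\ref{genpplap}) forces $\psi_{1,1}(t)=t$ deterministically, so $\Phi(r,t)=\cos(v\|\boldsymbol{\delta}\|t\sqrt r)$ and Steps~1--2 collapse back to (\ref{rmcf}), confirming the prefactor $1/\big(B(\tfrac12,\tfrac{k+1}2)\Gamma(\gamma)\big)$ and the argument $-v^2\|\boldsymbol{\delta}\|^2t^{2\alpha}r$.
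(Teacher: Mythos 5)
Your proof is correct and arrives at exactly the same series the paper derives, but by a genuinely different middle route. The paper, after the same first step (averaging (\ref{rmcf}) against the law of $\psi_{\alpha,\gamma}(t)$), Laplace-transforms in $t$, expands $J_{k/2}$ in its power series, inverts termwise to obtain the single-series form (\ref{genwrtform}), and only then manufactures the $r$-integral: it inserts $\Gamma(n+\frac{1}{2})/\Gamma(n+\frac{1}{2})$, recognizes the Beta integral $B(n+\frac{1}{2},\frac{k+1}{2})$, and invokes the Legendre duplication formula to convert $n!\,\Gamma(n+\frac{1}{2})$ into $\Gamma(2n+1)$. You instead create the $r$-integral at the outset via the Poisson integral representation of $J_{k/2}$ (valid since $k/2>-1/2$), which collapses the problem to the scalar quantity $\mathbb{E}\cos\big(v\|\boldsymbol{\delta}\|\sqrt{r}\,\psi_{\alpha,\gamma}(t)\big)$; you then evaluate this by termwise use of (\ref{genpplap}) and inversion, or equivalently by the even moments $\mathbb{E}\psi_{\alpha,\gamma}^{2n}(t)=\Gamma(2n+\gamma)t^{2n\alpha}/(\Gamma(\gamma)\Gamma(2n\alpha+1))$, which are consistent with the first two moments computed in the paper. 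Your route buys economy and transparency: the factor $\Gamma(2n+1)$ and the prefactor $1/B(\frac{1}{2},\frac{k+1}{2})$ appear naturally, no duplication formula is needed, and the required interchanges are routine Fubini arguments. The paper's route buys a reusable intermediate: the single series (\ref{genwrtform}) is precisely the representation (\ref{genwritcf}) that is exploited afterwards in the remark and in the proof of Theorem \ref{thm6.2}, and your argument does not produce it along the way.

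One caution, which you yourself flagged as the delicate point: the series that both you and the paper end with, namely $\sum_{n\ge0}\Gamma(2n+\gamma)x^n/\big(\Gamma(2n+1)\Gamma(2n\alpha+1)\big)$ with $x=-v^2\|\boldsymbol{\delta}\|^2t^{2\alpha}r$, does not literally match the definition (\ref{genwrit}) of ${}_1\Psi_2\big[(\gamma,2);(1,2),(1,2\alpha)\,\big|\,x\big]$, which carries an extra $n!$ in the denominator; by the duplication formula the series equals $\sqrt{\pi}\;{}_1\Psi_2\big[(\gamma,2);(\tfrac{1}{2},1),(1,2\alpha)\,\big|\,x/4\big]$ instead. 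Since the paper's own proof terminates with the identical series and asserts the same identification, your proposal is at parity with the paper on this point; the discrepancy lies in the statement's notation, not in your argument.
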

\begin{proof}
For $t>0$, $k\ge0$ and $\boldsymbol{\delta}\in\mathbb{R}^2$, we have
\begin{align}
	\mathbb{E}\{e^{i\boldsymbol{\delta}\textbf{Z}_{\alpha,\gamma}'(t)}|\mathscr{N}_{\alpha,\gamma}(t)=k\}&=\int_{0}^{\infty}\mathbb{E}\{e^{i\boldsymbol{\delta}\textbf{Z}'(\psi_{\alpha,\gamma}(t))}|\mathscr{N}(\psi_{\alpha,\gamma}(t))=k,\psi_{\alpha,\gamma}(t)=u\}\mathrm{Pr}\{\psi_{\alpha,\gamma}(t)\in\mathrm{d}u\}\nonumber\\
	&=\int_{0}^{\infty}\mathbb{E}\{e^{i\boldsymbol{\delta}\textbf{Z}'(u)}|\mathscr{N}(u)=k\}\mathrm{Pr}\{\psi_{\alpha,\gamma}(t)\in\mathrm{d}u\}\nonumber\\
	&=\int_{0}^{\infty}\frac{2^{\frac{k}{2}}\Gamma(\frac{k}{2}+1)J_{\frac{k}{2}}(vu||\boldsymbol{\delta}||)}{(vu||\boldsymbol{\delta}||)^{\frac{k}{2}}}\mathrm{Pr}\{\psi_{\alpha,\gamma}(t)\in\mathrm{d}u\},\label{Rmlap1}
\end{align}
where we have used (\ref{rmcf}) to get the last step. On taking the Laplace transform on both sides of (\ref{Rmlap1}) and using (\ref{genpplap}), we get
\begin{align*}
	\int_{0}^{\infty}e^{-wt}	\mathbb{E}\{e^{i\boldsymbol{\delta}\textbf{Z}_{\alpha,\gamma}'(t)}|\mathscr{N}_{\alpha,\gamma}(t)=k\}\,\mathrm{d}t&=\int_{0}^{\infty}\frac{2^{\frac{k}{2}}\Gamma(\frac{k}{2}+1)J_{\frac{k}{2}}(vu||\boldsymbol{\delta}||)}{(vu||\boldsymbol{\delta}||)^{\frac{k}{2}}}\frac{w^{\alpha\gamma-1}u^{\gamma-1}}{\Gamma(\gamma)}e^{-w^\alpha u}\,\mathrm{d}u\\
	&=\frac{\Gamma(\frac{k}{2}+1)w^{\alpha\gamma-1}}{\Gamma(\gamma)}\sum_{n=0}^{\infty}\frac{(-1)^n(v||\boldsymbol{\delta}||/2)^{2n}}{n!\Gamma(n+\frac{k}{2}+1)}\int_{0}^{\infty}u^{2n+\gamma-1}e^{-w^\alpha u}\,\mathrm{d}u\\
	&=\frac{\Gamma(\frac{k}{2}+1)w^{\alpha\gamma-1}}{\Gamma(\gamma)}\sum_{n=0}^{\infty}\frac{(-1)^n(v||\boldsymbol{\delta}||/2)^{2n}\Gamma(2n+\gamma)}{n!\Gamma(n+\frac{k}{2}+1)w^{\alpha(2n+\gamma)}}\\
	&=\frac{\Gamma(\frac{k}{2}+1)}{\Gamma(\gamma)}\sum_{n=0}^{\infty}\frac{(-1)^n(v||\boldsymbol{\delta}||/2)^{2n}\Gamma(2n+\gamma)}{n!\Gamma(n+\frac{k}{2}+1)w^{2n\alpha+1}}.
\end{align*}
Its inverse Laplace transform yields
\begin{align}
	\mathbb{E}\{e^{i\boldsymbol{\delta}\textbf{Z}_{\alpha,\gamma}'(t)}|\mathscr{N}_{\alpha,\gamma}&(t)=k\}\nonumber\\
	&=\frac{\Gamma(\frac{k}{2}+1)}{\Gamma(\gamma)}\sum_{n=0}^{\infty}\frac{(-1)^n(v||\boldsymbol{\delta}||/2)^{2n}\Gamma(2n+\gamma)t^{2n\alpha}}{n!\Gamma(n+\frac{k}{2}+1)\Gamma(2n\alpha+1)}\label{genwrtform}\\
	&=\frac{\Gamma(\frac{k}{2}+1)}{\Gamma(\frac{k+1}{2})\Gamma(\gamma)}\sum_{n=0}^{\infty}\frac{\Gamma(\frac{k+1}{2})\Gamma(n+\frac{1}{2})}{\Gamma(n+\frac{k}{2}+1)\Gamma(n+\frac{1}{2})}\frac{\Gamma(2n+\gamma)(-(v||\boldsymbol{\delta}||t^\alpha/2)^2)^n}{n!\Gamma(2n\alpha+1)}\nonumber\\
	&=\frac{\Gamma(\frac{k}{2}+1)}{\Gamma(\frac{k+1}{2})\Gamma(\gamma)}\int_{0}^{1}r^{-\frac{1}{2}}(1-r)^{\frac{k-1}{2}}\sum_{n=0}^{\infty}\frac{\Gamma(2n+\gamma)(-v^2||\boldsymbol{\delta}||^2t^{2\alpha} r/4)^n}{n!\Gamma(2n\alpha+1)\Gamma(n+\frac{1}{2})}\,\mathrm{d}r\nonumber\\
	&=\frac{1}{B(\frac{1}{2},\frac{k+1}{2})\Gamma(\gamma)}\int_{0}^{1}r^{-\frac{1}{2}}(1-r)^{\frac{k-1}{2}}\sum_{n=0}^{\infty}\frac{\Gamma(2n+\gamma)\Gamma(n)(-v^2||\boldsymbol{\delta}||^2t^{2\alpha} r/4)^n}{n!\Gamma(2n\alpha+1)2^{1-2n}\Gamma(2n)}\,\mathrm{d}r\nonumber\\
	&=\frac{1}{B(\frac{1}{2},\frac{k+1}{2})\Gamma(\gamma)}\int_{0}^{1}r^{-\frac{1}{2}}(1-r)^{\frac{k-1}{2}}\sum_{n=0}^{\infty}\frac{\Gamma(2n+\gamma)(-v^2||\boldsymbol{\delta}||^2t^{2\alpha} r)^n}{\Gamma(2n\alpha+1)\Gamma(2n+1)}\,\mathrm{d}r.\nonumber
\end{align}
This completes the proof.
\end{proof}
\begin{remark}
	From (\ref{genwrtform}), the conditional characteristics function have the following representation:
	\begin{equation}\label{genwritcf}
		\mathbb{E}\{e^{i\boldsymbol{\delta}\textbf{Z}_{\alpha,\gamma}'(t)}|\mathscr{N}_{\alpha,\gamma}(t)=k\}=\frac{\Gamma(\frac{k}{2}+1)}{\Gamma(\gamma)}{}_1\Psi_2\left[\begin{matrix}
			(\gamma,2)\\\\
			(\frac{k}{2}+1,1),\ (1,2\alpha)
		\end{matrix}\Bigg| -\frac{v^2||\boldsymbol{\delta}||^2t^{2\alpha}}{4}\right],\ t\ge0.
	\end{equation}
	For $\alpha=\gamma=1$, it further reduces to (\ref{rmcf}) as follows:
	\begin{align*}
		\mathbb{E}\{e^{i\boldsymbol{\delta}\textbf{Z}_{1,1}'(t)}|\mathscr{N}_{1,1}(t)=k\}&={\Gamma\left(\frac{k}{2}+1\right)}{}_1\Psi_2\left[\begin{matrix}
			(1,2)\\\\
			(\frac{k}{2}+1,1),\ (1,2)
		\end{matrix}\Bigg| -\frac{v^2||\boldsymbol{\delta}||^2t^{2}}{4}\right]\\
		&={\Gamma\left(\frac{k}{2}+1\right)}\sum_{n=0}^{\infty}\frac{(-1)^n(v||\boldsymbol{\delta}||/2)^{2n}\Gamma(2n+1)t^{2n}}{n!\Gamma(n+\frac{k}{2}+1)\Gamma(2n+1)}\\
		&=\frac{2^{\frac{k}{2}}\Gamma(\frac{k}{2}+1)}{(v||\boldsymbol{\delta}||t)^{\frac{k}{2}}}\sum_{n=0}^{\infty}\frac{(-1)^n(v||\boldsymbol{\delta}||t/2)^{2n+\frac{k}{2}}}{n!\Gamma(n+\frac{k}{2}+1)}=\frac{2^{\frac{k}{2}}\Gamma(\frac{k}{2}+1)J_{\frac{k}{2}}(v||\boldsymbol{\delta}||t)}{(v||\boldsymbol{\delta}||t)^{\frac{k}{2}}}.
	\end{align*}	
	Moreover, on taking $\gamma=1$ in (\ref{frmcf}), we get
	\begin{align*}
		\mathbb{E}\{e^{i\boldsymbol{\delta}\textbf{Z}_{\alpha,1}'(t)}|\mathscr{N}_{\alpha,1}(t)=k\}&=\frac{1}{B(\frac{1}{2},\frac{k+1}{2})}\int_{0}^{1}r^{-\frac{1}{2}}(1-r)^{\frac{k-1}{2}}{}_1\Psi_2\left[\begin{matrix}
			(1,2)\\\\
			(1,2),\ (1,2\alpha)
		\end{matrix}\Bigg| -v^2||\boldsymbol{\delta}||^2t^{2\alpha}r\right]\,\mathrm{d}r\\
		&=\frac{1}{B(\frac{1}{2},\frac{k+1}{2})}\int_{0}^{1}r^{-\frac{1}{2}}(1-r)^{\frac{k-1}{2}}E_{2\alpha,1}(-v^2||\boldsymbol{\delta}||^2t^{2\alpha}r)\,\mathrm{d}r,
	\end{align*}
	which agrees with the result obtained in Eq. (3.15) of Beghin and Orsingher (2009).
\end{remark}

The following result provides the conditional distribution of $\textbf{Z}_{\alpha,\gamma}(t)$.
\begin{theorem}\label{thm6.2}
For $k\ge0$, conditional on the event $\{\mathscr{N}_{\alpha,\gamma}(t)=k\}$, the distribution of the random vector $\textbf{Z}_{\alpha,\gamma}(t)$ is given by
{\small\begin{align}\label{frmdist}
	\mathrm{Pr}\{\textbf{Z}_{\alpha,\gamma}(t)\in\mathrm{d}\textbf{z}|\mathscr{N}_{\alpha,\gamma}(t)=k\}/|\mathrm{d}\textbf{z}|&=\frac{\Gamma(\frac{k}{2}+1)}{2\pi\Gamma(\gamma)}\int_{0}^{\infty}rJ_0(r||\textbf{z}||){}_1\Psi_2\left[\begin{matrix}
		(\gamma,2)\\\\
		(\frac{k}{2}+1,1),\ (1,2\alpha)
	\end{matrix}\Bigg| -\frac{v^2r^2t^{2\alpha}}{4}\right]\,\mathrm{d}r,
\end{align}}
where $J_0(\cdot)$ is the Bessel function of order zero.
\end{theorem}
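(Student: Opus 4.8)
The plan is to obtain \eqref{frmdist} by Fourier inversion of the conditional characteristic function already computed. The decisive structural fact is that the right-hand side of \eqref{genwritcf} (equivalently, of \eqref{frmcf}) depends on $\boldsymbol{\delta}$ only through its norm $\|\boldsymbol{\delta}\|$. Hence, conditional on $\{\mathscr{N}_{\alpha,\gamma}(t)=k\}$, the law of $\textbf{Z}_{\alpha,\gamma}(t)$ is isotropic, and its density is nothing but the order-zero Hankel transform of the radial profile of that characteristic function.

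Concretely, I would start from the two-dimensional Fourier inversion
\[
\mathrm{Pr}\{\textbf{Z}_{\alpha,\gamma}(t)\in\mathrm{d}\textbf{z}|\mathscr{N}_{\alpha,\gamma}(t)=k\}/|\mathrm{d}\textbf{z}|=\frac{1}{(2\pi)^2}\int_{\mathbb{R}^2}e^{-i\boldsymbol{\delta}\textbf{z}'}\,\mathbb{E}\{e^{i\boldsymbol{\delta}\textbf{Z}_{\alpha,\gamma}'(t)}|\mathscr{N}_{\alpha,\gamma}(t)=k\}\,\mathrm{d}\boldsymbol{\delta},
\]
pass to polar coordinates $\boldsymbol{\delta}=(r\cos\theta,r\sin\theta)$ with $r=\|\boldsymbol{\delta}\|$, and carry out the angular integration by the standard identity $\int_0^{2\pi}e^{-ir\|\textbf{z}\|\cos\theta}\,\mathrm{d}\theta=2\pi J_0(r\|\textbf{z}\|)$. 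This collapses the double integral to
\[
\frac{1}{2\pi}\int_0^\infty r\,J_0(r\|\textbf{z}\|)\,\frac{\Gamma(\frac{k}{2}+1)}{\Gamma(\gamma)}\,{}_1\Psi_2\!\left[\begin{matrix}(\gamma,2)\\
(\frac{k}{2}+1,1),\ (1,2\alpha)\end{matrix}\,\Bigg|\,-\frac{v^2r^2t^{2\alpha}}{4}\right]\mathrm{d}r,
\]
where the radial profile is read directly off \eqref{genwritcf}. Pulling the constant $\Gamma(\frac{k}{2}+1)/(2\pi\Gamma(\gamma))$ out of the integral reproduces \eqref{frmdist} verbatim.

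The step requiring care is the rigorous justification of this inversion: the profile ${}_1\Psi_2[\,\cdots\,|-v^2r^2t^{2\alpha}/4]$ decays only polynomially as $r\to\infty$, so the Hankel integral is an oscillatory improper integral rather than an absolutely convergent one, and a naive term-by-term inversion of the defining power series of ${}_1\Psi_2$ is not immediately licit. To make the argument airtight I would invert not the closed form but the integral representation from which \eqref{genwritcf} was derived: the characteristic function is the $\{\psi_{\alpha,\gamma}(t)\}$-mixture of the Bessel kernels appearing in \eqref{rmcf}, and for each fixed mixing value $u$ the inner Hankel inversion of that kernel returns the bounded, compactly supported density \eqref{rmdist}. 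Inverting under the $u$-integral then represents the conditional law of $\textbf{Z}_{\alpha,\gamma}(t)$ as the mixture of the known densities $\mathrm{Pr}\{\textbf{Z}(u)\in\mathrm{d}\textbf{z}\mid\mathscr{N}(u)=k\}$; expanding that mixture in powers of $\|\textbf{z}\|$ and re-summing recovers the ${}_1\Psi_2$-series of \eqref{frmdist}. The crux is thus legitimising the exchange of Hankel inversion with the $u$-integration, which I would secure by dominated convergence using the $L^1$ control on \eqref{rmdist} together with the finiteness of the moments of $\psi_{\alpha,\gamma}(t)$; uniqueness of the characteristic function then confirms that the candidate density \eqref{frmdist} is the correct one.
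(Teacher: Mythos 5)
Your proposal follows essentially the same route as the paper's proof: two-dimensional Fourier inversion of the conditional characteristic function \eqref{genwritcf}, passage to polar coordinates, and evaluation of the angular integral via the identity $J_0(r||\textbf{z}||)=\frac{1}{2\pi}\int_{0}^{2\pi}e^{-ir(x\cos\theta+y\sin\theta)}\,\mathrm{d}\theta$. The only difference is that you additionally flag and sketch a justification for the convergence of the oscillatory Hankel integral (via the $\psi_{\alpha,\gamma}$-mixture representation and dominated convergence), a point the paper's proof passes over silently.
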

\begin{proof}
On taking the inverse Fourier transform on both sides of (\ref{genwritcf}), we get
\begin{align*}
	\mathrm{Pr}&\{\textbf{Z}_{\alpha,\gamma}(t)\in\mathrm{d}\textbf{z}|\mathscr{N}_{\alpha,\gamma}(t)=k\}/|\mathrm{d}\textbf{z}|\\
	&=\frac{\Gamma(\frac{k}{2}+1)}{(2\pi)^2\Gamma(\gamma)}\int_{-\infty}^{\infty}\int_{-\infty}^{\infty}e^{-i\boldsymbol{\delta}\textbf{z}'}{}_1\Psi_2\left[\begin{matrix}
		(\gamma,2)\\\\
		(\frac{k}{2}+1,1),\ (1,2\alpha)
	\end{matrix}\Bigg| -\frac{v^2||\boldsymbol{\delta}||^2t^{2\alpha}}{4}\right]|\mathrm{d}\boldsymbol{\delta}|\\
	&=\frac{\Gamma(\frac{k}{2}+1)}{(2\pi)^2\Gamma(\gamma)}\int_{0}^{\infty}\left(\int_{0}^{2\pi}e^{-ir(x\cos\theta+y\sin\theta)}\,\mathrm{d}\theta\right){}_1\Psi_2\left[\begin{matrix}
		(\gamma,2)\\\\
		(\frac{k}{2}+1,1),\ (1,2\alpha)
	\end{matrix}\Bigg| -\frac{v^2r^2t^{2\alpha}}{4}\right]r\,\mathrm{d}r.
\end{align*}
On using the following integral representation of the Bessel function (see Gradshteyn and Ryzhik (2007))
\begin{equation*}
	J_0(r\sqrt{x^2+y^2})=\frac{1}{2\pi}\int_{0}^{2\pi}e^{-ir(x\cos\theta+y\sin\theta)}\,\mathrm{d}\theta,
\end{equation*}
we get the required result. 
\end{proof}
\begin{remark}
	On using  (\ref{frmcf}) and following similar lines to the proof of Theorem \ref{thm6.2}, we get the following representation of the  conditional distribution of $\textbf{Z}_{\alpha,\gamma}(t)$:
	\begin{align}
		\mathrm{Pr}\{\textbf{Z}_{\alpha,\gamma}(t)\in\mathrm{d}\textbf{z}|\mathscr{N}_{\alpha,\gamma}(t)=k\}/|\mathrm{d}\textbf{z}|&=\frac{1}{2\pi B(\frac{1}{2},\frac{k+1}{2})\Gamma(\gamma)}\int_{0}^{\infty}rJ_0(r||\textbf{z}||)\int_{0}^{1}y^{-\frac{1}{2}}(1-y)^{\frac{k-1}{2}}\nonumber\\
		&\hspace{2cm} \cdot{}_1\Psi_2\left[\begin{matrix}
			(\gamma,2)\\\\
			(1,2),\ (1,2\alpha)
		\end{matrix}\Bigg| -v^2r^2t^{2\alpha}y\right]\,\mathrm{d}y\,\mathrm{d}r.\label{frmdist1}
	\end{align}
	For $\gamma=1$, it reduces to
	\begin{align*}
		\mathrm{Pr}\{\textbf{Z}_{\alpha,1}&(t)\in\mathrm{d}\textbf{z}|\mathscr{N}_{\alpha,1}(t)=k\}/|\mathrm{d}\textbf{z}|\\
		&=\frac{1}{2\pi B(\frac{1}{2},\frac{k+1}{2})}\int_{0}^{\infty}rJ_0(r||\textbf{z}||)\int_{0}^{1}u^{-\frac{1}{2}}(1-u)^{\frac{k-1}{2}}{}_1\Psi_2\left[\begin{matrix}
			(1,2)\\\\
			(1,2),\ (1,2\alpha)
		\end{matrix}\Bigg| -v^2r^2t^{2\alpha}u\right]\,\mathrm{d}u\,\mathrm{d}r\\		
		&=\frac{1}{2\pi B(\frac{1}{2},\frac{k+1}{2})}\int_{0}^{\infty}rJ_0(r||\textbf{z}||)\int_{0}^{1}u^{-\frac{1}{2}}(1-u)^{\frac{k-1}{2}} E_{2\alpha,1}(-v^2r^2t^{2\alpha}u)\,\mathrm{d}u\,\mathrm{d}r.
	\end{align*}	
	Further, for $\gamma=1$ and $\alpha={1}/{2}$, (\ref{frmdist1}) reduces to 
	\begin{align*}
		\mathrm{Pr}\{\textbf{Z}_{\frac{1}{2},1}(t)&\in\mathrm{d}\textbf{z}|\mathscr{N}_{\frac{1}{2},1}(t)=k\}/|\mathrm{d}\textbf{z}|\\
		&=\frac{1}{2\pi B(\frac{1}{2},\frac{k+1}{2})}\int_{0}^{\infty}rJ_0(r||\textbf{z}||)\int_{0}^{1}u^{-\frac{1}{2}}(1-u)^{\frac{k-1}{2}}\exp(-v^2r^2tu)\,\mathrm{d}u\,\mathrm{d}r\\
		&=\frac{1}{2\pi B(\frac{1}{2},\frac{k+1}{2})}\int_{0}^{1}u^{-\frac{1}{2}}(1-u)^{\frac{k-1}{2}}\sum_{n=0}^{\infty}\frac{(-1)^n(||\textbf{z}||/2)^{2n}}{n!\Gamma(n+1)}\int_{0}^{\infty}r^{2n+1}\exp(-v^2r^2tu)\,\mathrm{d}r\,\mathrm{d}u\\
		&=\frac{1}{2\pi B(\frac{1}{2},\frac{k+1}{2})}\int_{0}^{1}u^{-\frac{1}{2}}(1-u)^{\frac{k-1}{2}}\sum_{n=0}^{\infty}\frac{(-1)^n(||\textbf{z}||/2)^{2n}}{n!\Gamma(n+1)}\frac{\Gamma(n+1)}{2(v^2tu)^n}\\
		&=\frac{1}{4\pi B(\frac{1}{2},\frac{k+1}{2})}\int_{0}^{1}u^{-\frac{1}{2}}(1-u)^{\frac{k-1}{2}}\frac{\exp\left(-{||\textbf{z}||^2}/{4v^2tu}\right)}{v^2tu}\,\mathrm{d}u,
	\end{align*}
	which agrees with the result obtained in Eq. (3.3) of Beghin and Orsingher (2009). Thus, $$\mathrm{Pr}\left\{\textbf{Z}_{\frac{1}{2},1}(t)\in\mathrm{d}\textbf{z}|\mathscr{N}_{\frac{1}{2},1}(t)=k\right\}=\mathrm{Pr}\{\textbf{Z}(|B(t)|)\in\mathrm{d}\textbf{z}|\mathscr{N}(|B(t)|)=k\},\ t\ge0,$$
	where $\{|B(t)|,\ t\ge0\}$ is the reflecting Brownian motion.
\end{remark}

\section{Compound fractional Poisson random field}\label{sec7}
The compound Poisson process extends the concept of Poisson process by allowing for random jump sizes instead of fixed unit step size jumps. It has applications across diverse fields such as insurance (see Dickson (2016)), reliability analysis (see Wang (2022)), mining (see Kruczek \textit{et al.} (2018)), evolutionary biology (see Huelsenbeck \textit{et al.} (2000)), \textit{etc}. For various types of the fractional generalizations of the compound Poisson process, we refer the reader to Beghin and Macci ((2014), (2016)), Di Crescenzo \textit{et al.} (2015), Khandakar and Kataria (2023) and the references therein.
Here, we consider a fractional version of the compound Poisson random field. 

Let $\{Z_i\}_{i\ge0}$ be a collection of iid random variables, and let $\{\mathscr{N}_{\nu_1,\nu_2}(t_1,t_2),\ (t_1,t_2)\in\mathbb{R}^2_+\}$, $\nu_1,\nu_2\in(0,1]$ be the FPRF  as defined in Section \ref{sec3}. Further, we assume that the random variables $Z_i$'s are independent of the FPRF. Let us consider the random process $\{\mathcal{Y}_{\nu_1,\nu_2}(t_1,t_2),\ (t_1,t_2)\in\mathbb{R}^2_+\}$ on positive plane $\mathbb{R}^2_+$ defined as follows:
\begin{equation}\label{cFPRF}
	\mathcal{Y}_{\nu_1,\nu_2}(t_1,t_2)\coloneqq\sum_{i=1}^{\mathscr{N}_{\nu_1,\nu_2}(t_1,t_2)}Z_i.
\end{equation}
We call this process as the compound fractional Poisson random field (CFPRF). Moreover, if $\nu_1=\nu_2=1$ then CFPRF is the compound Poisson random field (CPRF). Note that CFPRF can also be considered as a random walk subordinated to the FPRF.

Let $F_Z(\cdot)$ be the distribution function of $Z_1$. Then, for $(t_1,t_2)\in\mathbb{R}^2_+$ such that $\mathscr{N}_{\nu_1,\nu_2}(t_1,t_2)<\infty$, the distribution function $F_{\mathcal{Y}_{\nu_1,\nu_2}}(y)=\mathrm{Pr}\{\mathcal{Y}_{\nu_1,\nu_2}(t_1,t_2)\leq y\}$, $y\in\mathbb{R}$ of CFPRF is given by
	\begin{align*}
		F_{\mathcal{Y}_{\nu_1,\nu_2}}(y,t_1,t_2)
		&=\sum_{k=0}^{\infty}\mathrm{Pr}\bigg\{\sum_{i=1}^{k}Z_i\leq y\bigg\}q_{\nu_1,\nu_2}(k,t_1,t_2)\nonumber\\
		&=q_{\nu_1,\nu_2}(0,t_1,t_2)F_Z^{*0}(y)+\sum_{k=1}^{\infty}q_{\nu_1,\nu_2}(k,t_1,t_2) F_Z^{*k}(y)\nonumber\\
		&=\mathcal{H}(y)+\sum_{n=1}^{\infty}\frac{n!(-\lambda t_1^{\nu_1}t_2^{\nu_2})^n}{\Gamma(n\nu_1+1)\Gamma(n\nu_2+1)}\bigg(\mathcal{H}(y)+\sum_{k=1}^{n}\binom{n}{k}(-1)^{k}F_Z^{*k}(y)\bigg),
	\end{align*}
	where we have used (\ref{v1}) and (\ref{p1}) to get the last step. Here, $F_Z^{*k}(y)$ denotes the $k$-fold convolution of $F_Z(\cdot)$ and $\mathcal{H}(y)$ is the Heaviside function defined by
	\begin{equation*}
		\mathcal{H}(y)=\begin{cases}
			1,\ y\ge0,\\
			0,\ y<0.
		\end{cases}
	\end{equation*} 	
	Thus, the density function of CFPRF is given by
	\begin{equation*}
		f_{\mathcal{Y}_{\nu_1,\nu_2}}(y,t_1,t_2)=\delta(y)+\sum_{n=1}^{\infty}\frac{n!(-\lambda t_1^{\nu_1}t_2^{\nu_2})^n}{\Gamma(n\nu_1+1)\Gamma(n\nu_2+1)}\bigg(\delta(y)+\sum_{k=1}^{n}\binom{n}{k}(-1)^{k}f_Z^{*k}(y)\bigg),\ y\in\mathbb{R},
	\end{equation*}
	where $f_Z^{*k}(y)$ is $k$-fold convolution of the probability density function $f_Z(z)=\mathrm{d}F_Z(z)/\mathrm{d}z$.
	Moreover, if we take $\nu_1=\nu_2=1$ then we get the density function of the compound Poisson random field in the following form:
	\begin{equation*}
		f_{\mathcal{Y}_{\nu_1,\nu_2}}(y,t_1,t_2)=\delta(y)e^{-\lambda t_1t_2}+\sum_{k=1}^{\infty}\frac{(\lambda t_1 t_2)^k}{k!}f_Z^{*k}(y),\ y\in\mathbb{R}.
	\end{equation*}

In the following theorem, we show that the similar result holds for any spatial point process. Its proof is similar to the case of CFPRF. Hence, it is omitted.
\begin{theorem}
	Let $\{Z_i\}_{i\ge0}$ be a sequence of iid real valued random variables with distribution function $F_Z(\cdot)$, and let $\{\mathcal{N}(B),\ B\in\mathcal{B}_{\mathbb{R}^d}\}$ be a spatial point process on $\mathbb{R}^d$. Also, assume that $\{Z_i\}_{i\ge0}$ is independent of $\{\mathcal{N}(B),\ B\in\mathcal{B}_{\mathbb{R}^d}\}$. For $B\in\mathcal{B}_{\mathbb{R}^d}$ such that $\mathcal{N}(B)<\infty$, consider the process
	$
	\mathcal{Y}(B)=\sum_{i=1}^{\mathcal{N}(B)}Z_i.
	$
	Then, the cumulative distribution function of $\mathcal{Y}(B)$ is given by
	\begin{equation*}
		\mathrm{Pr}\{\mathcal{Y}(B)\leq y\}=\mathrm{Pr}\{\mathcal{N}(B)=0\}\mathcal{H}(y)+\sum_{k=1}^{\infty}\mathrm{Pr}\{\mathcal{N}(B)=k\}F_Z^{*k}(y).
	\end{equation*}	
\end{theorem}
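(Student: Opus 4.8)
The plan is to condition on the value of the counting variable $\mathcal{N}(B)$ and then exploit the independence assumption, mirroring the computation already carried out above for the CFPRF but without substituting any explicit form for the point-process distribution. Since $\mathcal{N}(B)<\infty$ almost surely, it takes values in $\{0,1,2,\dots\}$, so the law of total probability applies and gives
\begin{equation*}
	\mathrm{Pr}\{\mathcal{Y}(B)\le y\}=\sum_{k=0}^{\infty}\mathrm{Pr}\Big\{\textstyle\sum_{i=1}^{k}Z_i\le y\ \Big|\ \mathcal{N}(B)=k\Big\}\,\mathrm{Pr}\{\mathcal{N}(B)=k\}.
\end{equation*}

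Next I would remove the conditioning: because $\{Z_i\}_{i\ge0}$ is independent of $\{\mathcal{N}(B),\ B\in\mathcal{B}_{\mathbb{R}^d}\}$, the conditional law of $\sum_{i=1}^{k}Z_i$ given $\{\mathcal{N}(B)=k\}$ coincides with its unconditional law. Since the $Z_i$ are iid with distribution function $F_Z(\cdot)$, the distribution function of the fixed-length sum $\sum_{i=1}^{k}Z_i$ is the $k$-fold convolution $F_Z^{*k}(\cdot)$, for every $k\ge1$. For the term $k=0$ the sum is empty and hence identically $0$, so the event reduces to $\{0\le y\}$, whose indicator is the Heaviside function $\mathcal{H}(y)$; this is exactly $F_Z^{*0}(y)$ under the usual convention that the zero-fold convolution is the distribution function of the point mass at the origin. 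Isolating this term yields the asserted identity
\begin{equation*}
	\mathrm{Pr}\{\mathcal{Y}(B)\le y\}=\mathrm{Pr}\{\mathcal{N}(B)=0\}\,\mathcal{H}(y)+\sum_{k=1}^{\infty}\mathrm{Pr}\{\mathcal{N}(B)=k\}\,F_Z^{*k}(y).
\end{equation*}

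The only point requiring care — rather than a genuine obstacle — is the legitimacy of writing the distribution function as an infinite series. Each summand is nonnegative, bounded above by $\mathrm{Pr}\{\mathcal{N}(B)=k\}$, and the weights $\mathrm{Pr}\{\mathcal{N}(B)=k\}$ sum to one; thus the series is dominated by a convergent series, the interchange of expectation and summation is justified by dominated convergence, and the resulting mixture is itself a bona fide distribution function. Since this argument is structurally identical to the CFPRF computation preceding the theorem, the details can safely be omitted, as the statement indicates.
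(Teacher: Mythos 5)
Your proof is correct and follows essentially the same route as the paper: the paper omits the proof of this theorem, referring to the CFPRF computation immediately preceding it, which is exactly your argument --- condition on $\{\mathcal{N}(B)=k\}$, use independence to replace the conditional law of $\sum_{i=1}^{k}Z_i$ by the $k$-fold convolution $F_Z^{*k}(y)$, and identify the $k=0$ term with the Heaviside function $\mathcal{H}(y)=F_Z^{*0}(y)$. Your added remark justifying the interchange of summation and expectation is a harmless (and welcome) extra detail not spelled out in the paper.
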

\subsection{Generalized CPRF} Here, we define a generalized compound Poisson random field on $\mathbb{R}^d_+$ by using the fractional random field defined by (\ref{genfprfdef}). Let us consider a CPRF $\{{\mathcal{Y}}_{\alpha,\gamma}(B),\ B\in\mathcal{B}_{\mathbb{R}^d_+}\}$ defined as follows:
\begin{equation*}
		{\mathcal{Y}}_{\alpha,\gamma}(B)\coloneqq\sum_{i=1}^{{\mathscr{N}}_{\alpha,\gamma}(B)}Z_i,
\end{equation*}
where $\{\mathscr{N}_{\alpha,\gamma}(B),\ B\in\mathcal{B}_{\mathbb{R}^d}\}$ is the random field whose distribution is given by (\ref{genfprfdef}). In particular, for $d=\gamma=1$, it reduces to the compound fractional Poisson process (see Scalas (2011)). Moreover, for $\gamma=\alpha=1$, it reduces to the compound Poisson process.

	Let $\phi(u)=\mathbb{E}e^{iuZ_1}$, $u\in\mathbb{R}$ be the characteristics function of $Z_1$. Then, the Laplace transform of the characteristics function of  ${\mathcal{Y}}_{\alpha,\gamma}(B)$ is
\begin{align*}
	\int_{0}^{\infty}e^{-w|B|}\mathbb{E}e^{iu\mathcal{Y}_{\alpha,\gamma}(B)}\,\mathrm{d}|B|&=\int_{0}^{\infty}e^{-w|B|}\sum_{k=0}^{\infty}\phi(u)^k\frac{(\gamma)_k(\lambda |B|^\alpha)^k}{k!}E_{\alpha,\alpha k+1}^{\gamma+k}(-\lambda |B|^\alpha)\,\mathrm{d}|B|\\
	&=\sum_{k=0}^{\infty}\phi(u)^k\frac{(\gamma)_k\lambda^k}{k!}\frac{w^{\alpha\gamma-1}}{(w^\alpha+\lambda)^{\gamma+k}},\ \ (\text{using (\ref{wetlapmittag})})\\
	&=\frac{w^{\alpha\gamma-1}}{(w^\alpha+\lambda)^\gamma}\sum_{k=0}^{\infty}\frac{(-\gamma)_{(k)}}{k!}\left(-\frac{\phi(u)\lambda}{w^\alpha+\lambda}\right)^k=\frac{w^{\alpha\gamma-1}}{(w^\alpha+(1-\phi(u))\lambda)^\gamma},
\end{align*}
whose inversion yields
$
	\mathbb{E}e^{iu\mathcal{Y}_{\alpha,\gamma}(B)}=E_{\alpha,1}^\gamma(-(1-\phi(u))\lambda|B|^\alpha),\ u\in\mathbb{R}.$

\section{Concluding remarks}
The Poisson random field on $\mathbb{R}^2_+$ is a mathematical model that represent the pattern of points that are scattered in plane. In this paper, we have introduced and studied a fractional variant of the Poisson random field on $\mathbb{R}^2_+$. This variant is obtained by replacing the integer order derivatives by the Caputo fractional derivatives in the system of differential equations that governs the distribution of PRF. The main advantage of taking Caputo fractional derivative over the Riemann-Liouville fractional derivative is that it does not involve the fractional order derivative in the initial condition, which is more suitable for practical purposes. We presented a detailed study on the FPRF and highlighted the connection between FPRF and PRF.  In various studies, it has been observed the time-changed random processes and time fractional mathematical models provides a better models to study real life systems such as, in option pricing (see Carr and Wu (2004)), in biology (see Orovio \textit{et al.} (2014), Feng \textit{et al.} (2023)), \textit{etc}. The FPRF has potential applications in the fields of ecology and astronomy which require the data collected over a long period of time. Moreover, a generalized fractional Poisson process is introduced using a family of probability distributions. The Poisson and fractional Poisson probability distributions belong to this family of probability distributions.
As an application, we consider a time-changed linear and planer random motions driven by the GPP. In a particular case, the discussed planer random motion reduces to the planer random motion related to the fractional Poisson process.

Future studies may concern about the extension of this study on higher dimensional Euclidean space. It may also include the various characterizations for these processes.

\end{document}